  \newtheoremstyle{break}
   {\topsep}{\topsep}%
   {\itshape}{}%
   {\bfseries}{}%
   {\newline}
   {\thmname{#1}\thmnumber{\@ifnotempty{#1}{ }\@upn{#2}}%
    \thmnote{ {\bfseries(#3)}}}%
\theoremstyle{break}
\theoremstyle{remark}
\newtheorem{remark}{Remark}
\newtheorem{theorem}{\textbf{Theorem}}[section]
\newtheorem{lemma}[theorem]{\textbf{Lemma}}
\newtheorem{assumption}[theorem]{\textbf{Assumption}}
\definecolor{pink}{rgb}{1,0,1}
\definecolor{changecolor}{rgb}{0,0,0}
\newcommand{\zetalow}{\delta_\zeta}
\renewcommand{\i}{\textrm i}
\newcommand{\jump}[1]{\llbracket#1\rrbracket}
\renewcommand{\Re}{\operatorname{Re}}
\renewcommand{\Im}{\operatorname{Im}}
\newcommand{\RR}{\mathbb R}
\let\bs\boldsymbol
\begin{document} 

\title{A Nitsche-type  Method for Helmholtz Equation with an Embedded Acoustically Permeable Interface}

  \author[add1]{Esubalewe Lakie Yedeg\corref{cor}}
  \ead{yedegl@cs.umu.se}
  \author[add1]{Eddie Wadbro}
  \ead{eddiew@cs.umu.se} 
  \author[add2]{Peter Hansbo}
  \ead{Peter.Hansbo@ju.se} 
  \author[add3]{Mats G. Larson}
  \ead{mats.larson@math.umu.se} 
  \author[add1]{Martin Berggren\corref{cor}}
  \ead{martin.berggren@cs.umu.se} 

  \cortext[cor]{Corresponding author}
  \address[add1]{Department of Computing Science, Umeå University, Sweden}
  \address[add2]{Department of Mechanical Engineering, Jönköping University, Sweden}
  \address[add3]{Department of Mathematics and Mathematical Statistics, Umeå University, Sweden}

\date{\today}
\begin{abstract}

We propose a new finite element method for Helmholtz equation in the situation where an acoustically permeable interface is embedded in the computational domain. 
A variant of Nitsche's method, different from the standard one, weakly enforces the impedance conditions for transmission through the interface. 
As opposed to a standard finite-element discretization of the problem, our method seamlessly handles a complex-valued impedance function $Z$ that is allowed to vanish. 
In the case of a vanishing impedance, the proposed method reduces to the classic Nitsche method to weakly enforce continuity over the interface.
We show stability of the method, in terms of a discrete Gårding inequality, for a quite general class of surface impedance functions, provided that possible surface waves are sufficiently resolved by the mesh.
Moreover, we prove an a priori error estimate under the assumption that the absolute value of the impedance is bounded away from zero almost everywhere.
Numerical experiments illustrate the performance of the method for a number of test cases in 2D and 3D with different interface conditions.
\end{abstract} 
\begin{keyword}
 Helmholtz equation\sep 
Finite Element method\sep
Nitsche's method \sep 
interface problem \sep 
 acoustic impedance  \sep
 surface wave  \sep
 Gårding inequality
\end{keyword} 
\maketitle

\section{Introduction}	

In the context of acoustic or electromagnetic wave propagation, material properties of domain boundaries or thin embedded interfaces are commonly characterized in terms of a \emph{surface impedance} $Z$.
For governing equations written in second-order form and in frequency domain, the surface impedance condition is straightforward to enforce weakly as a natural condition in the corresponding variational form.
The surface impedance then appears in the denominator of a boundary term in variational form.
The limit $Z\to0$ corresponds to a Dirichlet condition, so the case $Z=\epsilon$ for a small $\epsilon>0$ can  be considered as an approximate treatment of a Dirichlet condition.

This approximation corresponds to the penalty method championed by Babuška~\cite{Ba73} to impose Dirichlet boundary conditions in the context of finite element methods.
Viewed as a numerical implementation of the Dirichlet condition, this  penalty method is simple to use but suffers from the fact that it is not consistent with the equation for the exact condition, which mean that the method will not be optimal-order accurate in general.
This method may also yield ill-conditioned system matrices, particularly for higher order elements.
An improvement that addresses these issues was suggested by Nitsche~\cite{Ni71}, and his ideas have been the basis for a wide range of further developments. 
Interior-penalty discontinuous Galerkin methods~\cite{ArBrCoMa02} use the ideas of Nitsche to enforce inter-element continuity.
Nitsche’s approach can also be used for domain decomposition and as a mortar method for meshes that do not match node-wise across an interface~\cite{St98,Becker2003}.
Juntunen \& Stenberg~\cite{Juntunen2009} extended Nitsche's method, designed for pure Dirichlet conditions, to a general class of mixed boundary conditions.
Hansbo \& Hansbo~\cite{Hansbo20043523} introduced a Nitsche-type method for static linear elasticity in order to handle imperfect bounding, modeled with elastic spring-type conditions, across an embedded interface.
Recently, there has also been an intense development of so-called  cut finite element techniques, where interfaces, typically supporting jumps in the solution across the interface, are allowed to cut arbitrarily across a background mesh~\cite{BuClHaLaMa15}.
The transmission conditions at the interface are in these methods handled by variations of the idea by Nitsche.

In this article, we present a Nitsche-type method to impose a surface impedance function on an interface embedded with in a domain, where the wave propagation is governed by the Helmholtz equation for the acoustic pressure.
The method is conceptually similar to the approach of Hansbo \& Hansbo~\cite{Hansbo20043523} but accommodated to the special features of this wave propagation problem.
Our method is designed to seamlessly handle a complex-valued impedance function that is allowed to vanish, for which the method reduces to the symmetric interior-penalty method to enforce interelement continuity.
A condition that requires particular attention is when the surface impedance is \emph{stiffness dominated}.
The imaginary part of the surface impedance is then negative, which implies that \emph{surface waves} can occur in a layer close to the impedance layer.
The possibility of surface waves complicates the analysis of our method.
Nevertheless, we are able to show stability of the method, in terms of a discrete Gårding inequality, for a quite general class of surface impedance functions, under the condition, if applicable, that the surface waves are resolved by the mesh.

\section{Problem statement}\label{s:probstatement}

\subsection{Linear acoustics in the presence of impedance surfaces}\label{LinearAcoustics}

We consider time-harmonic acoustic wave propagation in still air.
The acoustic pressure and velocity are assumed to be given by $P(\bs x,t) = \Re e^{\i\omega t} p (\bs x)$ and $\bs U(\bs x,t) = \Re e^{\i\omega t} \bs u (\bs x)$, where $\omega\in\RR$ is the angular frequency, and where the acoustic pressure and velocity amplitude functions $p$ and $\bs u$ satisfies the linear, time-harmonic  wave equation, which in first-order form can be written
\begin{subequations}\label{e:firstordersyst}
\begin{align}
\i\omega \rho\bs u + \nabla p & = 0, \label{e:linmom} 	\\	
\frac{\i\omega}{c^2} p + \nabla\cdot \rho\bs u &= 0,
\end{align}
\end{subequations}
where $\rho$ is the static air density and $c$  the speed of sound.

We assume that there is a smooth, orientable surface $\Gamma$ located inside the domain, and we denote by $\bs n_1$ and $\bs n_2=-\bs n_1$ the two unit normal fields on each side of the surface.
We fix an orientation of the surface by selecting one of these normals and denoting it by $\bs n$. 
We assume that an acoustic flux $\bs n\cdot\bs u$ is transmitted (leaking) through the surface such that the acoustic flux at each point is proportional to the local acoustic pressure jump over the surface.
The pressure may thus be discontinuous across the surface although $\bs n\cdot\bs u$ is continuous.
Note that this model excludes transversal wave propagation in the surface material itself, since the model is strictly local. 
We define $p_i$, $i=1$, 2 as the limit acoustic pressure when approaching the surface from the interior of the side for which $\bs n_i$ is the outward-directed normal; that is, for $\bs x\in\Gamma$,
\begin{equation}\label{pidef}
p_i(\bs x) = \lim_{s\to0^+} p(\bs x-s\bs n_i(\bs x)) 
\end{equation}
and we denote the pressure jump over the surface by
\begin{equation}
\jump p = \bs n\cdot(\bs n_1p_1 + \bs n_2 p_2).
\end{equation}
Thus, under our modeling assumptions, the acoustic flux through the surface will satisfy
\begin{equation}\label{e:Zdef}
Z\bs n\cdot\bs u = \jump p.
\end{equation}
The frequency-dependent complex function $Z$ is the local \emph{transmission impedance} of the surface.
We assume that $\Re Z\geq0$, which means that the surface is \emph{acoustically passive}; that is, acoustic energy may be absorbed but not created by the surface.
The limits $|Z|\to 0$, $|Z|\to\infty$ model a vanishing and a sound-hard surface, respectively.
The condition $\Im Z =0$ means that the acoustic flux is in phase with the pressure jump, otherwise the surface will introduce a reactive load with a phase shift.
If the mechanical properties of the surface can be modeled by  distributed mass, spring, and damping densities, the transmission impedance will have the form
\begin{equation}\label{e:msdZ}
Z = d + \i(m\omega  - k/\omega),
\end{equation}
where $m$, $d$, and $k$ is the mass, damping, and spring constants per unit area, respectively.

The concept of transmission impedance is typically used as a macroscopic model for microscopic features.
For instance, a perforated metallic plate is often modeled as a mass--damping system, in which semi-empirical formulas for the mass and damping coefficients can be deduced from experiments~\cite{KiCu98}.
The reactive part of the perforation impedance can be established by homogenization of the inviscid equations~\cite{BiDrGm04}.

A special case is when $\Im Z < 0$.
As can be seen from expression~\eqref{e:msdZ}, this case corresponds to a surface whose acoustical properties are \emph{stiffness dominated}. 
In this case, \emph{surface waves}~\cite[\S~3.2.4]{RiHi15} can appear in a layer of depth $\delta = O(|\Im Z|/k)$ around the surface.
A local wave number associated with these waves increases with decreasing $|\Im Z|$, and approaches $O(1/\delta)$ as $\Im Z\to0$.

The acoustic velocity $\bs u$ can be reduced from system~\eqref{e:firstordersyst}, which leads to the Helmholtz equation for the acoustic pressure,
\begin{equation}
-\kappa^2 p  - \Delta p = 0,
\end{equation}
where $\kappa  = \omega/c$ is the (bulk) wave number.
Evaluating equation~\eqref{e:linmom} on either side of $\Gamma$, using the assumption that $\bs n\cdot\bs u$ is continuous over the surface,  we find that  the flux of the acoustic pressure is continuous over the surface,
\begin{equation}\label{contflux}
\frac{\partial p_1}{\partial n} =  \frac{\partial p_2}{\partial n}
\end{equation}
and  that 
\begin{equation}\label{e:linmomface}
\i \kappa \rho c\, \bs n\cdot\bs u  + \left\{\frac{\partial p}{\partial n} \right\} = 0,
\end{equation}
where
\begin{equation}
\left\{\frac{\partial p}{\partial n} \right\}  = \frac12\left(\frac{\partial p_1}{\partial n}+\frac{\partial p_2}{\partial n}\right).
\end{equation}
Making use of model~\eqref{e:Zdef}, expression~\eqref{e:linmomface} can be written as the transmission condition
\begin{equation}\label{impedancecond}
\frac{\i\kappa}{\zeta}\jump p + \left\{\frac{\partial p}{\partial n} \right\} = 0
\qquad\text{on $\Gamma$,}
\end{equation}
where $\zeta = Z/(\rho c)$ is the normalized transition impedance.

\subsection{Preliminaries}\label{sec:notation}

Let $\Omega$ be an open bounded domain in $\mathbb{R}^n, n = 2,3$ with a Lipschitz boundary $\partial\Omega.$
Assume that $\Omega$ can be split into two disjoint, open, and connected subdomains $\Omega_1$ and $\Omega_2 $ such that $\Omega = \Omega_1\cup\Omega_2\cup\Gamma_\textrm{I}$, where 
$\Gamma_\textrm{I} = \overline{\Omega}_1\cap\overline{\Omega}_2$ is a smooth interface boundary of codimension one with positive measure.
See Figure~\ref{fig:DomCases}  for illustrations.

Denote the space of square integrable functions on $\Omega_1\cup\Omega_2$ by $L^2(\Omega_1\cup\Omega_2)$.
Let $\alpha= (\alpha_1,\dots,\alpha_n)\in \mathbb{N}^n$ be a multi-index with $|\alpha|=\sum_{i=1}^n\alpha_i$.
For a nonnegative integer $m$,  $H^m(\Omega_1\cup\Omega_2)$ denotes the set of functions $p\in L^2(\Omega_1\cup\Omega_2)$ such that all weak partial derivatives $\partial^\alpha p$ with $|\alpha|\leq m$ are also in $L^2(\Omega_1\cup\Omega_2)$.
Spaces $L^2(\Omega_1\cup\Omega_2)$ and $H^m(\Omega_1\cup\Omega_2)$ are equipped  with norms
\begin{equation}
\begin{aligned}
\|p\|_{L^2(\Omega_1\cup\Omega_2)}^2 &=  \int\limits_{\Omega_1\cup\Omega_2}\; \mathclap{\lvert p\rvert^2,}
\\
  \|p\|_{H^m(\Omega_1\cup\Omega_2)}^2 &=  \sum_{\lvert\alpha\rvert\leq m}\Big( \int\limits_{\Omega_1}  \lvert\partial^\alpha p\rvert^2+\int\limits_{\Omega_2}  \lvert\partial^\alpha p\rvert^2\Big), 
\end{aligned}
\end{equation}
respectively.
Note that $\Omega_1\cup\Omega_2$ is disconnected as a topological space and that the $H^m(\Omega_1\cup\Omega_2)$ norms for $m\geq1$ are ``broken'' norms that exclude the interface and thus contain functions that are discontinuous over~$\Gamma_\text{I}$.

\begin{remark}
Throughout the article we do not explicitly specify the measure symbol (such as $dV$, for instance) in the integrals, since the type of measure will be clear from the domain of integration.
\end{remark}

For $i=1,2$, and a measurable subset $\Gamma \subset \partial \Omega_i$, we denote by
$\gamma_{\Gamma,m}^{\Omega_i}:H^{m+1}(\Omega_i)\to H^{m+1/2}(\Gamma )$ the continuous, $m$th-order trace operator~\cite[Theorem 8.7]{wloka1987}, for which there is a constant $c_l$ such that 
\begin{equation}\label{trace1}
\|\gamma_{\Gamma,m}^{\Omega_i}\, p\|_{H^{m+1/2}(\Gamma )}\leq c_1 \|p\|_{H^{m+1}(\Omega_i)}
\qquad\forall p\in H^{m+1}(\Omega_i).
\end{equation}
For $m=0$ and 1, operator $\gamma_{\Gamma,m}^{\Omega_i}$ applied on $C^{m}\bigl(\overline\Omega_i\bigr)$ functions yield the restrictions of $p$ and $\partial p/\partial n$ on $\Gamma$, respectively.
In addition to inequality~\eqref{trace1}, the zeroth-order trace operator satisfies~\cite[Theorem 1.6.6]{brenner2008}
\begin{equation}\label{trace2}
\|\gamma_{\Gamma,0}^{\Omega_i}\, p\|_{L^{2}(\Gamma )}^2\leq c_2 \|p\|_{L^2(\Omega_i)}\|p\|_{H^1(\Omega_i)},\;\;i=1,2,
\end{equation}
for some constant $c_2$. 
We denote the space of all measurable functions on $\Gamma_\textrm{I}$ that are bounded almost everywhere by
$L^{\infty}(\Gamma_\textrm{I})$.

\subsection{Model problem} \label{Prob_description}

Assume that the boundary of $\Omega_1\cup\Omega_2$ consists of the non-overlapping parts $\Gamma_{\textrm I}$, $\Gamma_{\textrm{io}}$, and $\Gamma_{\textrm{s}}$.
Using the notation $|\Gamma| = \int_{\Gamma}1\,\mathit{dS}$, we here limit the discussion to the following two cases for $\Gamma_{\textrm{io}}$,  illustrated in Figure~\ref{fig:DomCases}:
\begin{description}
\item[\hspace{0.5cm}Case (i)]  $|\partial\Omega_i\cap\Gamma_{\textrm{io}}|>0, i = 1,2$ (Figure~\ref{fig:DomCases} (a)). That is, a portion of $\Gamma_{\textrm{io}}$ is part of the boundary to both subdomains.
\item [\hspace{0.5cm}Case (ii)] $| \partial\Omega_1\cap\Gamma_{\textrm{io}}|>0$ and $|\partial\Omega_2\cap\Gamma_{\textrm{io}}|=0$ (Figure~\ref{fig:DomCases} (b) or (c)). That is, $\Gamma_{\textrm{io}}$ is only a portion of the boundary of $\Omega_1$.
\end{description}
 \begin{figure}
 	\centering
 	\setlength{\unitlength}{0.5\textwidth}
 	\includegraphics[width=150mm]{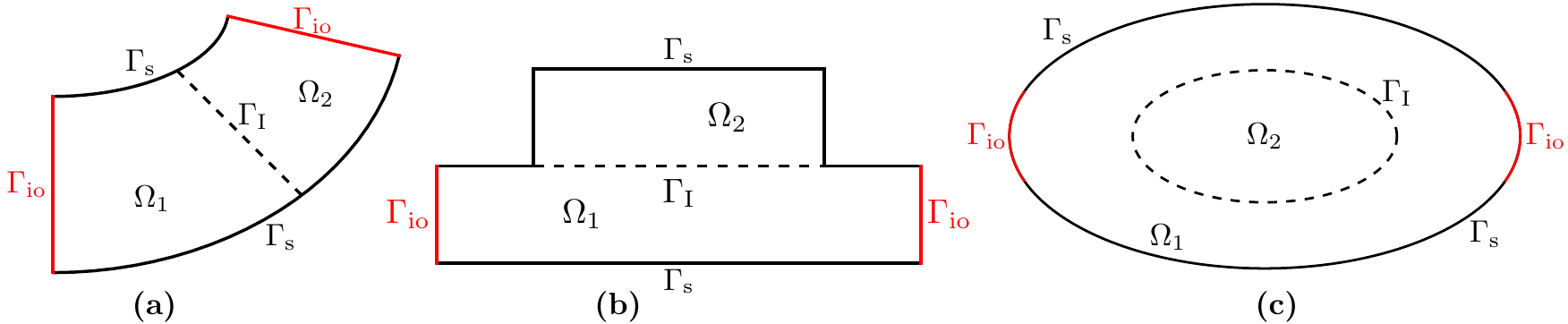} 
 	\caption{Computational domain cases}
 	\label{fig:DomCases} 
 \end{figure}

We consider the following boundary value problem for Helmholtz equation, in which impedance condition~\eqref{impedancecond} with $\zeta\neq 0$, $\text{Re }\zeta\geq0 $ is imposed on interface boundary $\Gamma_{\textrm{I}}$:
\begin{subequations}\label{stateequation}
\begin{alignat}{2}
\label{Helmholtzeqn}\Delta p+\kappa^2  p   &= 0                  &&\qquad	\textrm{ in }\Omega_1\cup\Omega_2, \\ 
\label{Gammaiocond} \textrm{i}\kappa p+\frac{\partial p}{\partial n}  &= 2\mathrm{i}\kappa g &&\qquad\textrm{ on } \Gamma_{\textrm{io}}, \\ 
\label{Gammascond} \frac{\partial p}{\partial n}  &= 0                                       &&\qquad\textrm{ on } \Gamma_{\textrm{s}}, \\ 
\label{GammaIcond}\frac{\i\kappa}{\zeta}\jump p + \left\{\frac{\partial p}{\partial n} \right\} &= 0&&\qquad\textrm{ on } \Gamma_{\textrm{I}}.
\end{alignat}
\end{subequations}

We will consider weak solutions to problem~\eqref{stateequation} in the space $H^1(\Omega_1\cup\Omega_2)$. 
Note that $\Omega_1\cup\Omega_2$ is disconnected  and that the elements in $H^1(\Omega_1\cup\Omega_2)$ are in general discontinuous across $\Gamma_{\textrm{I}}$.
A variational form of problem~\eqref{stateequation} may be formulated as follows: find $p \in H^1(\Omega_1\cup\Omega_2)$  such that
\begin{equation}\label{ContVarationalproblem}
\begin{aligned}
&\quad a(p,q) = \ell(q), \;\; \forall q\in H^1(\Omega_1\cup\Omega_2),
\end{aligned}
\end{equation}
where the linear functional $\ell$ is defined by
\begin{equation}\label{sourceterm}
\ell(q) = 2\mathrm{i}\kappa \int\limits_{\Gamma_{\mathrm{io}}} g {q}
\end{equation}
in which $g\in L^{2}(\Gamma_{\textrm{io}})$ is a given function, and the bilinear form $a$ is given by
\begin{align}\label{contbilinear_a} 
a(p,q) &= a_0(p,q)
   +\mathrm{i}\kappa \int\limits_{\Gamma_{\text{I}}} \frac{1}{\zeta} \jump p\llbracket {q}\rrbracket , 
\end{align} 
with
\begin{align}\label{bilinear0}
a_0(p,q) = \int\limits_{\mathclap{\Omega_1\cup\Omega_2}}  \nabla p\cdot\nabla  {q}  - \kappa^2  \int\limits_{\mathclap{\Omega_1\cup\Omega_2}}   p{q} 
+\textrm{i}\kappa  \int\limits_{\mathclap{\Gamma_{\textrm{io}} }}   p{q}.
\end{align}
\begin{remark}
For complex-valued problems, it is more common to consider \emph{sesquilinear} forms instead of, as here, bilinear forms.
We have chosen to define all problems in this article using bilinear forms since the expressions becomes slightly simpler (fewer complex conjugates will be needed).
\end{remark}
In order for bilinear form $a$ to be well defined on all of $H^1(\Omega_1\cup\Omega_2)\times H^1(\Omega_1\cup\Omega_2)$, we will in this section require that $\zeta\in L^{\infty}(\Gamma_{\textrm{I}})$ such that $|\zeta|\geq\zetalow$ almost everywhere for some constant $\zetalow>0$.
Note, however,  that $\zeta\equiv0$ corresponds to an interface that vanishes; $p$ and $\partial p/\partial n$ will be continuous across the interface in this case. 
It would be valuable to be able to treat this condition seamlessly in a numerical solution.  
The discrete variational problem introduced in \S~\ref{s:FEMethod} will be constructed in order to be less restrictive on the admissible impedance functions and will, in particular, accept vanishing $\zeta$ on the whole or parts of the interface.

Solutions to problem~\eqref{ContVarationalproblem} satisfy the balance law
\begin{equation}\label{energy4}
 \int_{\Gamma_{\textrm{io}} }  |g|^2 = \int_{\Gamma_{\textrm{io}} } |p-g|^2+ 
\int_{\Gamma_{\textrm{I}}}  \Re\big(\frac{1}{\zeta}\big)|\jump p|^2.
\end{equation}  
which can be derived from the imaginary part of equation~\eqref{ContVarationalproblem} with $q=\overline p$, where the overbar denotes complex conjugate.
Note that the second term on the right side of expression~\eqref{energy4} is nonnegative due to the assumption $\Re\zeta\geq 0$.
Balance law~\eqref{energy4} can be interpreted as saying that the power flowing into $\Omega_1\cup\Omega_2$ (the left side) equals the power flowing out plus the losses in the interface (the right side).

\subsection{Existence and Uniqueness}
Well-posedness of problem~\eqref{ContVarationalproblem} will be shown with the help of a Gårding inequality and compactness~\cite[\S~17.4]{wloka1987}.
The analysis is slightly nonstandard due to the presence of the interface integral in the bilinear form.
Let $\Gamma_\text{I}^-$ be the union of all subsets of $\Gamma_\text I$ in which $\Im \zeta<0$ almost everywhere.
(Recall that $\Im \zeta<0$ is the condition for the appearance of surface waves!)
Letting $V=H^1(\Omega_1\cup\Omega_2)$, we introduce the product space $U=L^2(\Omega_1\cup\Omega_2)\times L^2(\Gamma_{\text I}^-)$ and define the mapping $J: V\to U$ by  $q\mapsto \bigl(\sqrt{\kappa} q,\, \jump q\big|_{{\Gamma_{\text I}^-}}^{\null}/\sqrt{\delta_\zeta}\bigr)$, which means that
\begin{equation}\label{Unorm} 
\|J q\|^2_U = \kappa  \int\limits_{\mathclap{\Omega_1\cup\Omega_2 }}\lvert q\rvert^2+   \frac1\zetalow\int\limits_{\Gamma_{\text{I}}^-}  \big\lvert\jump q\big\rvert^2.
\end{equation} 
We note that $J$ is injective and compact with an image that is dense in $U$.
Density follows since smooth functions with compact support are dense in $L^2(\Omega_1\cup\Omega_2)$ and since for any $g\in L^2(\Gamma_{\text I}^-)$, there is a sequence of functions $q_n\in H^1(\Omega_1\cup\Omega_2)$ such that	$\jump {q_n}\big|_{{\Gamma_{\text I}^-}}^{\null}\to g$ in $L^2(\Gamma_{\text I}^-)$ and $q_n\to0$ in $L^2(\Omega_1\cup\Omega_2)$. 
Compactness follows from the fact that the embeddings $H^1(\Omega_1\cup\Omega_2)\to L^2(\Omega_1\cup\Omega_2)$ and $H^{1/2}(\Gamma_{\text I}^-)\to L^2(\Gamma_{\text I}^-)$ are compact together with the trace theorem on $\Gamma_{\text I}^-$.

With the help of mapping $J$, we may define the triple $V\stackrel{J}{\to} U \stackrel{J'}{\to} V'$, where $U$ is identified with its dual, and where the mapping $J$ and its dual $J'$ are injective, compact, and with images that are dense in $U$ and $V'$, respectively.   
To show a Gårding inequality with respect to this triple, take the real part of bilinear form~\eqref{contbilinear_a} with $q = \bar{p}$ to obtain
\begin{equation}\label{bilinear_real}
\begin{aligned}
\Re a(p,\bar{p}) &= \int\limits_{\mathclap{\Omega_1\cup\Omega_2}} \lvert\nabla p\rvert^2 - \kappa^2\int\limits_{\mathclap{\Omega_1\cup\Omega_2 } }   \lvert p\rvert^2
  + \kappa \int\limits_{\Gamma_{\textrm{I}}} \frac{ \Im\zeta}{\lvert\zeta\rvert^2}\lvert\jump p\rvert^2,
\\
&\geq 
\int\limits_{\mathclap{\Omega_1\cup\Omega_2}}  \lvert\nabla p\rvert^2 - \kappa^2\int\limits_{\mathclap{\Omega_1\cup\Omega_2 } }  \lvert p\rvert^2
  - \kappa \int\limits_{\Gamma_{\textrm{I}}^-} 
  \frac{\lvert\Im\zeta\rvert}{\lvert\zeta\rvert^2} \lvert\jump p \rvert^2,
\end{aligned}
\end{equation}
which satisfies
\begin{equation}\label{goardeq}
\begin{aligned}
\Re a(p,\bar{p}) +2\kappa\|J p\|^2_{U} & \geq \int\limits_{\mathclap{\Omega_1\cup\Omega_2}}
\lvert\nabla p\rvert^2 
+\kappa^2\int\limits_{\mathclap{\Omega_1\cup\Omega_2 } }\lvert p\rvert^2 
+ \kappa \int\limits_{\Gamma_{\textrm I}^-} \left(\frac2\zetalow -\frac{\lvert\Im\zeta\rvert}{\lvert\zeta\rvert^2}\right) \lvert\jump p\rvert^2\geq C\|p\|^2_{V},
\end{aligned}
\end{equation}
where $|\zeta|\geq\zetalow$ has been used in the last inequality and where $C=\min\{1,\kappa^2\}$.
Thus, the Fredholm alternative applies to bilinear form $a$, and there exists a unique solution of problem~\eqref{ContVarationalproblem} for each $\ell\in V'$ if uniqueness holds~\cite[Theorem~17.11]{wloka1987}. 
Uniqueness follows from the following lemma. 
\begin{lemma}\label{contuniquenesslemma}
If $\textrm{Re}(\zeta)\geq 0$ and $p\in H^1(\Omega_1\cup\Omega_2)$ such that
\begin{equation}\label{bilinearform_zero}
a(p,q) = 0\qquad \forall q \in H^1(\Omega_1\cup\Omega_2),
\end{equation}
then $p\equiv 0$ in both Case (i) and (ii) (as defined in \S~\ref{sec:notation}).
\end{lemma}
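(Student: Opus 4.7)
The plan is to extract vanishing Cauchy data on $\Gamma_{\textrm{io}}$ from $a(p,\cdot)\equiv 0$ and then invoke a unique-continuation (Holmgren) argument to propagate the zero into each subdomain. I would start by testing~\eqref{bilinearform_zero} with $q=\bar p$ and inspecting the imaginary part; mirroring the computation behind balance law~\eqref{energy4}, the only imaginary contributions come from the $\Gamma_{\textrm{io}}$ term in $a_0$ and the interface term in $a$, yielding
\begin{equation*}
0=\kappa\int_{\Gamma_{\textrm{io}}} |p|^2 + \kappa\int_{\Gamma_{\textrm{I}}} \Re(1/\zeta)\,|\jump p|^2 .
\end{equation*}
Because $\Re(1/\zeta)=\Re\zeta/|\zeta|^2\geq 0$ by assumption, both integrands must vanish separately, and in particular $p=0$ on $\Gamma_{\textrm{io}}$.

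Next I would recover the strong form of the problem by testing~\eqref{bilinearform_zero} against functions supported, in turn, inside each subdomain and near each portion of the boundary or interface. This delivers $\Delta p+\kappa^2 p=0$ in $\Omega_1\cup\Omega_2$, the Robin condition $\partial p/\partial n+\i\kappa p=0$ on $\Gamma_{\textrm{io}}$, the Neumann condition on $\Gamma_{\textrm{s}}$, continuity of the normal flux across $\Gamma_{\textrm{I}}$, and the impedance condition~\eqref{GammaIcond}. Combined with $p=0$ on $\Gamma_{\textrm{io}}$, the Robin condition forces $\partial p/\partial n=0$ there, so $p$ carries vanishing Cauchy data on $\Gamma_{\textrm{io}}$. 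In Case~(i), $\Gamma_{\textrm{io}}$ contributes a set of positive surface measure to both $\partial\Omega_1$ and $\partial\Omega_2$, so Holmgren's uniqueness theorem—which applies since $-\Delta-\kappa^2$ has analytic coefficients and the relevant boundary pieces are smooth—forces $p\equiv 0$ in each $\Omega_i$ and finishes that case.

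Case~(ii) is the main obstacle, since Holmgren from $\Gamma_{\textrm{io}}$ reaches only $\Omega_1$ and must be relayed across the interface. To bridge, I would argue as follows. Because $p\equiv 0$ in $\Omega_1$, both $p_1$ and $\partial p_1/\partial n$ vanish on $\Gamma_{\textrm{I}}$; flux continuity then gives $\partial p_2/\partial n=0$ on $\Gamma_{\textrm{I}}$, hence $\{\partial p/\partial n\}=0$, and the impedance condition~\eqref{GammaIcond} collapses to $(\i\kappa/\zeta)\jump p=0$. Here the standing assumption $|\zeta|\geq\delta_\zeta>0$ is essential: it lets one divide through and conclude $\jump p=p_1-p_2=0$, which together with $p_1=0$ forces $p_2=0$ on $\Gamma_{\textrm{I}}$. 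A second application of Holmgren, now with zero Cauchy data on the smooth piece $\Gamma_{\textrm{I}}\subset\partial\Omega_2$, yields $p\equiv 0$ in $\Omega_2$ and completes the proof. The delicate point on which the argument rests is precisely the use of $|\zeta|\geq\delta_\zeta$; if $\zeta$ were allowed to vanish on a positive-measure subset of $\Gamma_{\textrm{I}}$, one could no longer extract $\jump p=0$ from $\{\partial p/\partial n\}=0$, which is consistent with the paper's remark that the vanishing-impedance regime is handled by a different mechanism at the discrete level.
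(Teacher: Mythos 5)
Your proposal follows the same overall strategy as the paper's proof: take the imaginary part of $a(p,\bar p)=0$ to conclude $p=0$ on $\Gamma_{\textrm{io}}$, continue uniquely into $\Omega_1$ (and $\Omega_2$ in Case~(i)), and in Case~(ii) relay the vanishing across $\Gamma_{\textrm{I}}$ using the interface data and the assumption $\lvert\zeta\rvert\geq\zetalow$. Two execution details differ. First, where you recover the full strong form and invoke Holmgren from vanishing Cauchy data on a boundary piece, the paper instead extends $p$ by zero into a strip $\widehat\Omega_1\setminus\Omega_1$ outside $\Gamma_{\textrm{io}}$ (using only the Dirichlet trace $p=0$ to get an $H^1$ extension, with the Neumann data entering implicitly through the variational identity holding for test functions that cross $\Gamma_{\textrm{io}}$) and then applies the \emph{interior} unique continuation principle to a weak solution that vanishes on an open set; for $p\in H^1$ with $\partial p/\partial n$ only in $H^{-1/2}$ this device is precisely what makes the ``Cauchy data'' step rigorous, so you should view your Holmgren step as shorthand for it rather than an independent argument. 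Second, in Case~(ii) you obtain $\jump p=0$ by combining weak flux continuity with the impedance condition; the paper gets it more directly by testing with $q$ supported in $\Omega_1$ (where $p\equiv0$), which reduces the identity to $\int_{\Gamma_{\textrm{I}}}(\overline\zeta/|\zeta|^2)\jump p\,q=0$ for all traces $q$ and avoids having to give meaning to $\{\partial p/\partial n\}$ as a distribution. Both routes are valid and both hinge, as you correctly emphasize, on $\lvert\zeta\rvert$ being bounded away from zero.
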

\begin{figure}\centering
\includegraphics{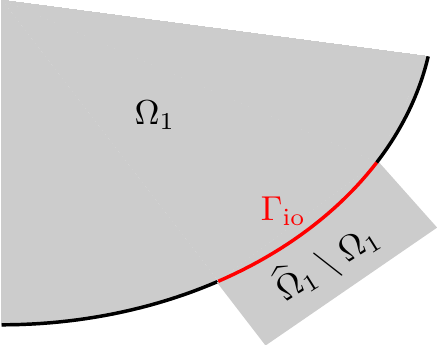}
\caption{In the proof of Lemma~\ref{contuniquenesslemma}, the solution $p$ in $\Omega_1$ is extended by zero into domain $\hat\Omega_1\supset\Omega_1$ which contains a portion $\hat\Omega_1\setminus\Omega_1$ laying outside of $\Gamma_\text{io}$}\label{f:extdomain}
\end{figure}

\begin{proof}
Let $a(p,q) =0$ $ \forall q \in H^1(\Omega_1\cup\Omega_2)$, then $a(p,\bar{p}) =0$. 
 By taking the imaginary part of $a(p,\bar{p})$, we find
\begin{equation} 
0=\textrm{Im}\;a(p,\bar{p}) =  \kappa \int\limits_{\Gamma_{\textrm{I}}} \frac{\textrm{Re}(\zeta)}{|\zeta|^2} |\jump p |^2 +\kappa  \int\limits_{\mathclap{\Gamma_{\textrm{io}} }}  |p|^2 .
\end{equation}
 Since $\textrm{Re}(\zeta)\geq 0$, we conclude that $p\equiv 0$ on $\Gamma_{\textrm{io}} $. 
Now choose $q\in H^1(\Omega_1\cup\Omega_2)$ such that $q|_{\Omega_2} =0$. Equation~\eqref{bilinearform_zero}  then reduces to
\begin{equation} \label{variationalprob_omega1}
  \int\limits_{\Omega_1}  \nabla p\cdot\nabla {q} - \kappa^2  \int\limits_{\Omega_1 }  p{q} 
   +\textrm{i}\kappa  \int\limits_{\Gamma_{\textrm{I}}} \frac{\overline{\zeta}}{|\zeta|^2} \jump p  {q}   =0 \;\; \forall q\in H^1(\Omega_1),
   \end{equation} 
with $p = 0$ on $\Gamma_\text{io}$. 
Let $\widehat\Omega_\text{1}$ be an extension of $\Omega_1$ into a strip outside $\Gamma_\text{io}\cap\partial\Omega_1$ (Figure~\ref{f:extdomain}), and extend $p$ by zero into $\widehat\Omega_\text{1}\setminus\Omega_1$.
Since $p$ vanishes over $\Gamma_\text{io}$, the extended function will be continuous over $\Gamma_\text{io}$. Thus, $p\in H^1(\widehat\Omega_\text{1})$  and, by equation~\eqref{variationalprob_omega1}, 
\begin{equation}
 \int\limits_{K}  \nabla p\cdot\nabla {q} - \kappa^2  \int\limits_{K}  p{q} 
   =0 \;\; \forall q\in C_0^\infty(K).
\end{equation} 
for each open set $K$ compactly embedded in $\widehat\Omega_1$ ($\overline K\subset \widehat\Omega_1 $), which implies that almost everywhere in each such $K$,
\begin{equation}\label{e:Helmholtz}
\Delta p + \kappa p = 0.
\end{equation}
Since $p$ satisfies equation~\eqref{e:Helmholtz} and vanishes identically  in $\widehat\Omega_1\setminus\Omega_1$, the unique continuation principle~\cite[Chap. 4.3]{leis2013} implies that $p\equiv 0$ in $\Omega_1$. 
In Case (i), that is, when $\lvert\partial\Omega_i\cap\Gamma_{\textrm{io}}\rvert>0,\; i = 1,2$, then, by the same argument as above, $p\equiv 0$ also in $\Omega_2$ and the conclusion follows.

In Case (ii), where $\lvert\partial\Omega_1\cap\Gamma_{\textrm{io}}\rvert>0$ and $\lvert\partial\Omega_2\cap\Gamma_{\textrm{io}}\rvert=0$, equation~\eqref{bilinearform_zero} becomes
\begin{equation}\label{variationalprob_omega2}
a(p,q) = \int\limits_{\mathclap{\Omega_2}} \nabla p\cdot\nabla {q}
- \kappa^2\int\limits_{\mathclap{\Omega_2} }   p{q}
   +\textrm{i}\kappa  \int\limits_{\Gamma_{\textrm{I}}} \frac{\overline{\zeta}}{|\zeta|^2} \llbracket {p}\rrbracket \llbracket {q}\rrbracket =0 \qquad \forall q\in H^1(\Omega_1\cup\Omega_2),
\end{equation}
since $p=0$ in $\Omega_1$.
By choosing  $q\in H^1(\Omega_1\cup\Omega_2)$ such that $q |_{\Omega_2} =0$, equation~\eqref{variationalprob_omega2} reduces to 
\begin{equation}
  \textrm{i}\kappa \int\limits_{\Gamma_{\textrm{I}}}\frac{\overline{\zeta}}{|\zeta|^2} \jump p  q  =0, \;\; \forall q\in H^1(\Omega_1),
  \end{equation}
which implies that $ \jump p=0$ on $\Gamma_{\text I}$, since $|\zeta|\geq \zetalow$.
Thus equation~\eqref{variationalprob_omega2} becomes
\begin{equation}
\int\limits_{\mathclap{\Omega_2}} \nabla p\cdot\nabla{q}- \kappa^2\int\limits_{\mathclap{\Omega_2} }   p{q} = 0 \;\; \forall q\in H^1(\Omega_1\cup\Omega_2),
\end{equation}
where $\gamma_{\Gamma_I}^{\Omega_2}p =0$ since $p\big|_{\Omega_1}^{\null}\equiv0$ and  $ \jump p=0$ on $\Gamma_{\text I}$.
Again, by an analogous extension argument as above together with the unique continuation principle, we conclude that $p\equiv 0$ in $\Omega_2$ and hence $p\equiv 0$ in $\Omega$ also in Case~(ii).
 \end{proof}
 
 \subsection{Continuity of fluxes}
 
Recall that \S~\ref{LinearAcoustics} started with the modeling assumption of an acoustic velocity that is continuous over the interface.
This assumption implied continuity of fluxes, expression~\eqref{contflux}, and led to formulation~\eqref{GammaIcond} of the interface condition in terms of the \emph{average} fluxes. 
However, it may not be obvious that the solution to corresponding variational problem~\eqref{ContVarationalproblem} in the end actually respects the a priori assumption of  continuous fluxes,  since continuity is not explicitly enforced.
In this section, we will see that the solution to the variational problem nevertheless satisfies such a continuity property in an appropriate weak sense.

The functional framework for the fluxes is the dual of the space $H^{1/2}_{00}(\Gamma_\text{I})$, a space that commonly occur in the context of transmission problems~\cite[Ch.~VII, \S~2.4, for instance]{DaLiV2}.
The space  $H^{1/2}_{00}(\Gamma_\text{I})$ is the space of traces on interface $\Gamma_\text{I}$ of functions in $H^1_0(\Omega)$ (recall from \S~\ref{sec:notation} that $\Omega=\Omega_1\cup\Omega_2\cup\Gamma_\text{I}$) provided with the norm
\begin{equation}\label{Hhalf00def}
\lVert\psi\rVert_{H^{1/2}_{00}(\Gamma_\text{I})} = \inf\left\{  \lVert w\rVert_{H^{1}(\Omega)}  \,\,\big\vert\, w\in H^1_0(\Omega), \gamma_{\Gamma_\text{I}}^\Omega w =\psi \right\}.
\end{equation}

To see how to define the weak flux, first assume that $p\in H^2(\Omega_1\cup\Omega_2)$ such that $\kappa^2p +\Delta p = 0$ in $\Omega_1$ and $p|_{\Omega_2}^{}\equiv0$. 
Integration by parts yields that for each $w\in H^1_0(\Omega)$, 
\begin{equation}\label{fluxmotivation}
\begin{aligned}
\int\limits_{\Gamma_\text{I}} \frac{\partial p_1}{\partial n_1} w 
&= \int\limits_{\mathclap{\Omega_1}}\Delta p \,w + \int\limits_{\Omega_1}\nabla p \cdot\nabla w 
= -\kappa^2\int\limits_{\Omega_1}p w + \int\limits_{\Omega_1}\nabla p \cdot\nabla w
\stackrel{\text{def}}{=\joinrel=}\ell_p(w), 
\end{aligned}
\end{equation}
where $p_1$ is the limit of $p$ when approaching $\Gamma_\text{I}$ from the interior of $\Omega_1$, as in definition~\eqref{pidef}.
Now note that $\ell_p$ is a bounded linear functional on $H^1_0(\Omega)$; that is,  for each $p\in H^1(\Omega_1\cup\Omega_2)$, there is a constant $C_p$ such that $\lvert\ell_p(w)\rvert\leq C_p\lVert w\rVert_{H^1(\Omega)}$ for each $w\in H^1_0(\Omega)$.
In particular, $\ell_p$ is thus a linear functional on $H^{1/2}_{00}(\Gamma_\text{I})$ by definition~\eqref{Hhalf00def}.
Thus, if $p\in H^1(\Omega_1\cup\Omega_2)$ is  the solution of variational problem~\eqref{ContVarationalproblem}, 
there is a functional $\lambda_1\in H^{1/2}_{00}(\Gamma_\text{I})'$ such that, for each $w\in H^1_0(\Omega)$,
\begin{equation}\label{lambda1def}
\langle\lambda_1, w\rangle_{\Gamma_\text{I}} = -\kappa^2\int\limits_{\Omega_1}p w + \int\limits_{\Omega_1}\nabla p \cdot\nabla w,
\end{equation}
where $\langle\cdot, \cdot\rangle_{\Gamma_\text{I}}$ denotes the duality pairing on $H^{1/2}_{00}(\Gamma_\text{I})$.
By expression~\eqref{fluxmotivation}, we see that the weak flux $\lambda_1$ is a generalization of $\partial p_1/\partial n_1$ for weak solutions $p$.

Analogously, assuming that $p\in H^2(\Omega_1\cup\Omega_2)$ such that $\kappa^2p +\Delta p = 0$ in $\Omega_2$ and $p|_{\Omega_1}^{}\equiv0$, integration by parts yields that, for each $w\in H^1_0(\Omega)$,
\begin{equation}\label{fluxmotivation2}
\begin{aligned}
\int\limits_{\Gamma_\text{I}} \frac{\partial p_2}{\partial n_2} w_2 
=- \int\limits_{\Gamma_\text{I}} \frac{\partial p_2}{\partial n_1} w_2 
= -\kappa^2\int\limits_{\Omega_2}p w + \int\limits_{\Omega_2}\nabla p \cdot\nabla w.
\end{aligned}
\end{equation}
Hence, for $p$ being the solution of variational problem~\eqref{ContVarationalproblem}, there is thus a $\lambda_2\in H^{1/2}_{00}(\Gamma_\text{I})'$ such that, for each $w\in H^1_0(\Omega)$,
\begin{equation}\label{lambda2def}
-\langle\lambda_2, w\rangle_{\Gamma_\text{I}} = -\kappa^2\int\limits_{\Omega_2}p w + \int\limits_{\Omega_2}\nabla p \cdot\nabla w,
\end{equation}
and  $\lambda_2$ is thus a generalization of $\partial p_2/\partial n_1$.

Addition of expressions~\eqref{lambda1def} and \eqref{lambda2def} implies that for each $w\in H^1_0(\Omega)$,
\begin{equation}
\langle\lambda_1-\lambda_2, w\rangle_{\Gamma_\text{I}} = -\kappa^2\int\limits_{\mathclap{\Omega_1\cup\Omega_2}}p w + \int\limits_{\mathclap{\Omega_1\cup\Omega_2}   }\nabla p \cdot\nabla w
= - \int\limits_{\Gamma_\text{I}} \frac1{\zeta}\jump p \jump w = 0,
\end{equation}
where equation~\eqref{ContVarationalproblem} has been used in the second equality, and the fact that $\jump w = 0$ for each $w\in H^1(\Omega)$ in the third equality.
Thus, $\lambda_1=\lambda_2$, that is, the flux  is continuous over the interface.

\section{Discrete variational problem}\label{s:FEMethod}

As long as the condition $|\zeta|\geq\zetalow$ is respected, a finite element discretization can directly be applied to variational form~\eqref{ContVarationalproblem}.
However, this discretization will not be able to handle an interface that vanishes completely, and the condition number of the matrix will blow up if $\zeta\to0$ on a set of positive measure on $\Gamma_\text I$.  
To handle the case of a vanishing and non-vanishing interface in a common formulation, we introduce a new variational form of Nitsche type.
The method is based on an idea previously proposed to treat compliant interfaces in solid mechanics~\cite{Hansbo434217}.

In this section, we assume that $\Omega\subset\RR^3$ and, in order to avoid domain approximations, that $\Omega$ has a polyhedral boundary. 
We introduce families of separate, non-degenerate and quasi-uniform tetrahedral triangulations $\bigl\{\mathcal{T}_1^h\bigr\}_{h>0}$ and $\bigl\{\mathcal{T}_2^h\bigr\}_{h>0}$ of $\Omega_1$ and $\Omega_2$, respectively, parameterized by $h =\max_{K\in \mathcal{T}_1^h\cup\mathcal{T}_2^h} h_K$, where $h_K$ is the diameter of  element $K\in \mathcal{T}_1^h\cup\mathcal{T}_2^h$. 
The mesh nodes of the two triangulations do not need to match on the interface $\Gamma_{\textrm{I}}$.
We define the finite element space $V_h= V_1^h + V_2^h$,  where $V_i^h$ consists functions that are continuous on $\Omega_i$, polynomials on each element in $\mathcal{T}_i^h$, and extended by zero into $\Omega\setminus\Omega_i$; that is, 
\begin{equation}
V_i^h = \{v\in H^1(\Omega_1\cup\Omega_2) \mid v\big|_{K}^{\strut}\in P_k(K),\, \forall K\in \mathcal{T}_i^h \textrm{ and } v\equiv 0\; \mathrm{ otherwise} \},
\end{equation}
where $P_k(K)$ denotes the polynomials of maximum degree $k\geq 1$ on element $K$.

For each element $K\in \mathcal{T}_1^h\cup\mathcal{T}_2^h$, we let $\rho_K$ denote the diameter of the largest ball contained in $K$.
The condition of non degeneracy is that there exists a constant $C$ such that for each $h>0$ and $K\in\mathcal{T}_1^h\cup\mathcal{T}_1^h$, $h_K/\rho_K\le C$.

\begin{remark}\label{r_nondegen}
We note that if an element $K$ satisfies condition $h_K/\rho_K\le C$, then all faces $F$ of $K$ will satisfy the condition $h_F/\rho_F\le C$, where $h_F$ is the diameter of $F$ and $\rho_F$ is the diameter of the largest disc contained in $F$.
That is, if the volume mesh family is non degenerate, then the surface mesh family on $\Gamma_{\text I}$, generated by the faces of the mesh that intersect with $\Gamma_{\text I}$, is also non degenerate. 
The above statement is a consequence of that $h_K\ge h_F$ and $\rho_K\le\rho_F$ for any face $F$ of an element $K$.
The first inequality follows since $F$ is a face of $K$ and thus is included in any ball that contains $K$.
To verify that $\rho_K \le \rho_F$, as illustrated in Figure~\ref{fig:degen}, consider the plane $P_F$ that is parallel to $F$ and that passes through the center of the largest ball in $K$.
The intersection $P_F\cap K$ is a triangle that by construction can hold a disc of diameter $\rho_K$.
This triangle can be translated so that is becomes a subset of $F$, and thus $F$ can also hold a disc of diameter $\rho_K$, which entails that $\rho_F\ge\rho_K$.
\end{remark}

\begin{figure}
 \centering  
  \includegraphics[scale=1]{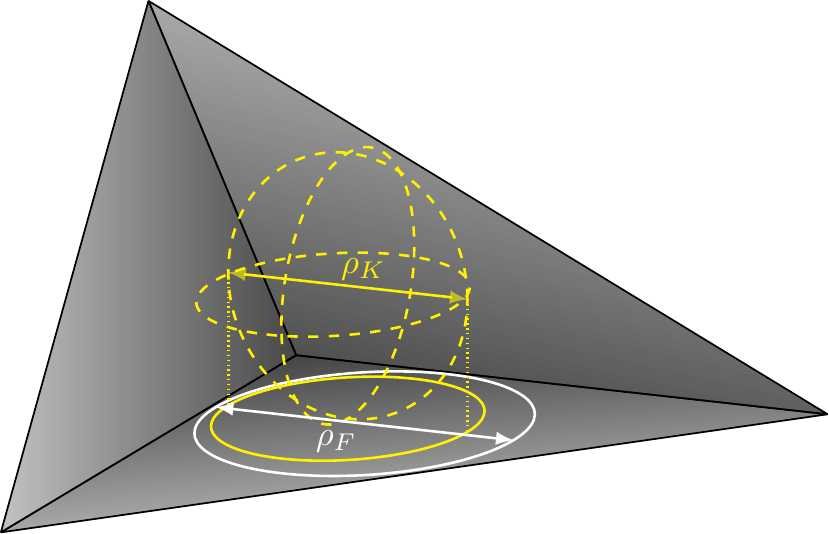}
  \caption{The diameter $\rho_K$ of the largest ball inscribed in a tetrahedron is smaller or equal to the diameter $\rho_F$ of the largest disc inscribed in any face $F$ of the tetrahedron.} 
  \label{fig:degen}
\end{figure}

To motivate the proposed variational form, assume that $p\in H^2(\Omega_1\cup\Omega_2)$ satisfies boundary value problem~\eqref{stateequation}. 
Multiplying Helmholtz equation~\eqref{Helmholtzeqn} by a test function $q\in H^1(\Omega_1\cup\Omega_2)$ and applying integration by parts, using boundary conditions~\eqref{Gammaiocond} and~\eqref{Gammascond} together with the continuity of the acoustic flux on the interface boundary, we find that
\begin{equation}\label{discbilinearconst1}
a_0(p,q)  -  \int\limits_{\Gamma_{\textrm{I}}} \left\{\frac{\partial p}{\partial n}\right\}\llbracket { q}\rrbracket = \ell(q),
\end{equation}
 where $\ell$ and $a_o$ are as stated in definitions~\eqref{sourceterm} and~\eqref{bilinear0}. 
After addition and subtraction of $\frac{\zeta}{ \textrm{i}\kappa}\!\!\left\{\frac{\partial p}{\partial n}\right\}\!\!\left\{\frac{\partial q}{\partial n}\right\}$, expression~\eqref{discbilinearconst1} can be written as  
\begin{align}\label{discbilinearconst2}
 a_0(p,q)- \int\limits_{\Gamma_{\textrm{I}}} \left\{\frac{\partial p}{\partial n}\right\}\left(\llbracket { q}\rrbracket+\frac{\zeta}{ \textrm{i}\kappa}\left\{\frac{{\partial q}}{\partial n}\right\}\right) +\int\limits_{\Gamma_{\textrm{I}}}  {\frac{\zeta}{ \textrm{i}\kappa}} \left\{\frac{\partial p}{\partial n}\right\}\left\{\frac{\partial q}{\partial n}\right\}= \ell(q).
\end{align}
Since $p$ satisfies boundary condition~\eqref{GammaIcond}, equation~\eqref{discbilinearconst2} can be extended to 
\begin{align}\label{discretebilinear}
a_{\lambda}(p,q): = a_0&(p,q) -   \int\limits_{\Gamma_{\textrm{I}}} \left\{\frac{\partial p}{\partial n}\right\} \left(\llbracket { q}\rrbracket+\frac{\zeta}{ \textrm{i}\kappa}\left\{\frac{{\partial q}}{\partial n}\right\}\right) - \int\limits_{\Gamma_{\textrm{I}}} \left(\jump p+\frac{\zeta}{ \textrm{i}\kappa}\left\{\frac{\partial p}{\partial n}\right\}\right)\left\{\frac{\partial q}{\partial n}\right\}
 \nonumber \\
 +& \int\limits_{\Gamma_{\textrm{I}}}  {\frac{\zeta}{ \textrm{i}\kappa}} \left\{\frac{\partial p}{\partial n}\right\}\left\{\frac{\partial q}{\partial n}\right\}
 + \int\limits_{\Gamma_{\textrm{I}}}\lambda\left(\jump p+\frac{\zeta}{ \textrm{i}\kappa}\left\{\frac{\partial p}{\partial n}\right\}\right) \left(\llbracket { q}\rrbracket+\frac{\zeta}{ \textrm{i}\kappa}\left\{\frac{{\partial q}}{\partial n}\right\}\right)   = \ell(q),
\end{align}
for any complex-valued $\lambda\in L^{\infty}(\Gamma_{\textrm{I}})$.

The method we propose, based on variational expression~\eqref{discretebilinear}, is: 
find $p_h \in V_h$   such that
\begin{equation}\label{discretestateeqn}
\begin{aligned}
\quad a_{\lambda}(p_h ,q_h )  &= \ell(q_h), \;\; \forall q_h \in V_h.
\end{aligned}
\end{equation}
The construction of the method immediately implies the following consistency lemma.
\begin{lemma}\label{consistentlemma}
A solution $p\in H^2(\Omega_1\cup\Omega_2)$  of problem~\eqref{ContVarationalproblem} satisfies
\begin{equation}
a_{\lambda}(p,q_h) = \ell(q_h) \qquad\forall q_h\in V_h.
\end{equation}
\end{lemma}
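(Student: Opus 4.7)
The plan is to show that the identity $a_\lambda(p,q_h)=\ell(q_h)$ is built into the very derivation of $a_\lambda$ in~\eqref{discretebilinear}, so the proof amounts to checking that each step in that derivation remains valid when the test function $q$ is replaced by an arbitrary $q_h\in V_h$, and that the extra ``added--then--annihilated'' terms really do annihilate on the exact solution.

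First I would note that since $p\in H^2(\Omega_1\cup\Omega_2)$, the one-sided traces $p_i$ and $\partial p_i/\partial n$ lie in $L^2(\Gamma_\text{I})$, so every interface integral appearing in $a_\lambda$ is well defined. Because $V_h\subset H^1(\Omega_1\cup\Omega_2)$, the integration-by-parts argument leading to~\eqref{discbilinearconst1} goes through verbatim with $q$ replaced by $q_h$: multiplying~\eqref{Helmholtzeqn} by $q_h$, integrating over $\Omega_1$ and $\Omega_2$ separately, and inserting boundary conditions~\eqref{Gammaiocond} on $\Gamma_\textrm{io}$ and~\eqref{Gammascond} on $\Gamma_\textrm{s}$ disposes of all exterior boundary contributions. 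On $\Gamma_\text{I}$, the flux continuity~\eqref{contflux}, which holds for any $H^2$-regular solution of~\eqref{ContVarationalproblem} as established in the ``Continuity of fluxes'' subsection, allows the sum of one-sided flux terms $\int_{\Gamma_\text{I}}(\partial p_i/\partial n) q_{h,i}$ to be rewritten as $\int_{\Gamma_\text{I}}\{\partial p/\partial n\}\jump{q_h}$. This yields~\eqref{discbilinearconst1}, and the algebraic rewriting~\eqref{discbilinearconst2} follows at once by adding and subtracting $\int_{\Gamma_\text{I}}(\zeta/\i\kappa)\{\partial p/\partial n\}\{\partial q_h/\partial n\}$, a manipulation that is independent of the choice of test function.

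Comparing~\eqref{discbilinearconst2} to the definition~\eqref{discretebilinear}, the terms in $a_\lambda(p,q_h)$ not already present on the left-hand side of~\eqref{discbilinearconst2} are
\begin{equation*}
-\int_{\Gamma_\textrm{I}}\Bigl(\jump p+\frac{\zeta}{\i\kappa}\Bigl\{\frac{\partial p}{\partial n}\Bigr\}\Bigr)\Bigl\{\frac{\partial q_h}{\partial n}\Bigr\}
+\int_{\Gamma_\textrm{I}}\lambda\Bigl(\jump p+\frac{\zeta}{\i\kappa}\Bigl\{\frac{\partial p}{\partial n}\Bigr\}\Bigr)\Bigl(\jump{q_h}+\frac{\zeta}{\i\kappa}\Bigl\{\frac{\partial q_h}{\partial n}\Bigr\}\Bigr).
\end{equation*}
Each of these terms carries the factor $\jump p+(\zeta/\i\kappa)\{\partial p/\partial n\}$, which vanishes a.e.\ on $\Gamma_\text{I}$ by the impedance condition~\eqref{GammaIcond}. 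Hence both extra terms are zero, and combining with~\eqref{discbilinearconst2} gives $a_\lambda(p,q_h)=\ell(q_h)$ for every $q_h\in V_h$.

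I do not anticipate any serious obstacle: the lemma is essentially a tautology built into the construction of $a_\lambda$, and the only point requiring any care is verifying that the regularity assumption $p\in H^2(\Omega_1\cup\Omega_2)$ is enough for the interface traces to make pointwise (a.e.) sense on $\Gamma_\text{I}$, so that the identity~\eqref{GammaIcond} may be inserted under the integrals.
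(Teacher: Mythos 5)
Your proof is correct and follows essentially the same route as the paper, which offers no separate proof but states that the lemma follows immediately from the derivation in equations~\eqref{discbilinearconst1}--\eqref{discretebilinear}: integration by parts plus flux continuity gives~\eqref{discbilinearconst1}, and the added terms vanish on the exact solution by the interface condition~\eqref{GammaIcond}. Your only addition is to make explicit the (standard, and in the paper implicit) identification of an $H^2$ solution of the variational problem~\eqref{ContVarationalproblem} with a solution of the strong problem~\eqref{stateequation}, which is the right point to flag.
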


Lemma~\ref{consistentlemma} implies Galerkin orthogonality in the sense that if $p\in H^2(\Omega_1\cup\Omega_2)$ solves problem~\eqref{ContVarationalproblem} and $p_h$ solves equation~\eqref{discretestateeqn}, then
\begin{equation}\label{orthogocond}
a_{\lambda}(p-p_h,q_h) = 0 \qquad\forall q_h\in V_h.
\end{equation}

We will choose  $\lambda$ as a complex-valued function of the local acoustical impedance $\zeta$, the wave number  $\kappa$, the mesh size $h$, and a sufficiently large parameter $\gamma>0$ to be specified later,
\begin{equation}\label{lambdadfn}
\lambda  = \left(\frac{h}{\gamma} + \frac{\zeta}{\textrm{i}\kappa}\right)^{-1},
\end{equation}
which under the requirements specified below  will be a nonzero and bounded function.

Under definition~\eqref{lambdadfn}, we note that, formally, for $h=0$ and $\zeta\neq0$, we obtain that $a_\lambda = a$; that is, the variational problem using $a_\lambda$ then reduces to the standard problem~\eqref{ContVarationalproblem}.
In the other extreme case, for $\zeta\equiv0$, $h>0$,
\begin{equation}
\begin{aligned}
a_\lambda(q_h,p_h) &= \int\limits_{\mathclap{\Omega_1\cup\Omega_2}}\nabla q_h\cdot\nabla p_h - \kappa^2\int\limits_{\mathclap{\Omega_1\cup\Omega_2}}q_hp_h 
+ \i\kappa\int\limits_{\Gamma_\text{io}}q_hp_h
\\ &\qquad
-\int\limits_{\Gamma_{\text I}}\jump{q_h}\Bigl\{\frac{\partial p_h}{\partial n}\Bigr\}
-\int\limits_{\Gamma_{\text I}}\Bigl\{\frac{\partial p_h}{\partial n}\Bigr\}\jump{p_h}
+\int\limits_{\Gamma_{\text I}}\frac\gamma{h}\jump{q_h}\jump{p_h};
\end{aligned}
\end{equation}
that is, the variational problem with $a_\lambda$ then reduces to the standard Nitsche method to weakly impose continuity of $p_h$ and $\partial p_h/\partial n$ over $\Gamma_{\text I}$. 
Variational problem~\eqref{discretestateeqn} with $\lambda$ defined as in expression~\eqref{lambdadfn} can thus be interpreted as an interpolation between these two extreme cases.

Throughout the following, we will require the condition
\begin{equation}\label{e:zetahcond}
h\Im \zeta\geq -\frac{\gamma}{4\kappa}|\zeta |^2\qquad \text{almost everywhere on $\Gamma_\text I$.}
\end{equation}
The next result yields sufficient conditions to satisfy requirement~\eqref{e:zetahcond}. 
\begin{lemma}\label{l:suffcond} 
Let $\zeta\in L^\infty(\Gamma_I)$ such that $|\zeta| \geq\zetalow >0 $ almost everywhere on $\Gamma^-_\text I$.
Then there is an $h_0>0$ such that condition~\eqref{e:zetahcond} is satisfied for each $0<h\leq h_0$.
In particular,
\begin{itemize}
\item[(i)]
when  $|\Gamma^-_\text I| = 0$ condition~\eqref{e:zetahcond} will be satisfied  $\forall h>0$ ($h_0=+\infty$), and
\item[(ii)]
when $|\Gamma^-_\text I| > 0$, condition~\eqref{e:zetahcond} will be satisfied if 
\begin{equation}\label{e:hresolved}
0<h\leq h_0 = \frac{\gamma\zetalow}{4\kappa}.
\end{equation}
\end{itemize}
\end{lemma}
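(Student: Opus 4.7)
The plan is to split the interface $\Gamma_\text I$ into the subset where $\Im\zeta\ge 0$ and its complement, which by definition is (almost) $\Gamma_\text I^-$, and to verify condition~\eqref{e:zetahcond} separately on each piece. On the part where $\Im\zeta\ge 0$ the left-hand side $h\,\Im\zeta$ is nonnegative while the right-hand side $-\frac{\gamma}{4\kappa}|\zeta|^2$ is nonpositive, so the inequality holds trivially for every $h>0$. This immediately disposes of all of $\Gamma_\text I$ in case~(i), since by hypothesis $|\Gamma_\text I^-|=0$, meaning $\Im\zeta\ge 0$ almost everywhere, and hence $h_0=+\infty$ works.

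For case~(ii) only the set $\Gamma_\text I^-$ remains to be handled, and there the inequality to be established is equivalent to
\begin{equation*}
h\,|\Im\zeta|\le \frac{\gamma}{4\kappa}|\zeta|^2\quad\text{a.e.\ on }\Gamma_\text I^-.
\end{equation*}
The elementary bound $|\Im\zeta|\le|\zeta|$ turns a sufficient condition into $h\le \frac{\gamma}{4\kappa}|\zeta|$, and then the standing hypothesis $|\zeta|\ge\zetalow$ on $\Gamma_\text I^-$ lets me replace $|\zeta|$ by the uniform lower bound $\zetalow$, giving precisely $h\le \gamma\zetalow/(4\kappa)=h_0$ as the sufficient threshold stated in~\eqref{e:hresolved}.

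Combining the two pieces shows that with the choice $h_0=\gamma\zetalow/(4\kappa)$ (interpreted as $+\infty$ when $|\Gamma_\text I^-|=0$) condition~\eqref{e:zetahcond} holds almost everywhere on all of $\Gamma_\text I$ for every $0<h\le h_0$, which is the general statement of the lemma. There is no real obstacle here: the argument is a two-line case split plus the trivial estimate $|\Im\zeta|\le|\zeta|$; the only thing to be slightly careful about is that the assumption $|\zeta|\ge\zetalow$ is only required on $\Gamma_\text I^-$, which is exactly the set on which the estimate actually has to be used, so no additional lower bound on $\zeta$ off of $\Gamma_\text I^-$ is needed.
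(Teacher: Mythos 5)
Your proof is correct and follows essentially the same route as the paper: the trivial sign argument on the set where $\Im\zeta\ge 0$, and on $\Gamma_\text I^-$ the rearrangement of condition~\eqref{e:zetahcond} into $h\,\lvert\Im\zeta\rvert\le\frac{\gamma}{4\kappa}\lvert\zeta\rvert^2$ combined with $\lvert\Im\zeta\rvert\le\lvert\zeta\rvert$ and the lower bound $\lvert\zeta\rvert\ge\zetalow$ (the paper packages the last two steps as $\zetalow\le\lvert\zeta\rvert^2/\lvert\Im\zeta\rvert$, but it is the same estimate).
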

\begin{proof}
When  $|\Gamma^-_\text I| = 0$, $\Im\zeta\geq0$ almost everywhere on $\Gamma_\text I$ and inequality~\eqref{e:zetahcond} will therefore be satisfied $\forall h>0$.
In regions where $\Im\zeta<0$, condition~\eqref{e:zetahcond} is equivalent to
\begin{equation}\label{e:zetahcond2}
h\leq \frac{\gamma|\zeta|^2}{4\kappa|\Im\zeta|}.
\end{equation}
Since $|\zeta| \geq\zetalow >0 $ almost everywhere on $\Gamma^-_\text I$, 
\begin{equation}
\zetalow\leq\frac{|\zeta|^2}{|\Im\zeta|}
\end{equation}
on $\Gamma^-_\text I$, so if $h$ satisfies condition~\eqref{e:hresolved},  we conclude that
\begin{equation}
0<h\leq \frac{\gamma\zetalow}{4\kappa}\leq \frac{\gamma|\zeta|^2}{4\kappa|\Im\zeta|};
\end{equation}
almost everywhere on $\Gamma^-_\text I $; that is, condition~\eqref{e:zetahcond2}  will be satisfied.
\end{proof}

Recall that $\Gamma^-_\text I$  is characterized by $\Im\zeta<0$, that surface waves can appear in this case, and that the thickness of the surface wave layer is $O(|\Im\zeta|/\kappa)$. 
Condition~\eqref{e:hresolved} can thus be interpreted as simply saying that the surface wave layer has to be resolved by the mesh. 

The following lemma shows some properties of $\lambda$ that will be used to show continuity and coercivity of $a_\lambda$ in Theorems~\ref{discContinequality} and~\ref{discGordinginequality} below.
\begin{lemma}\label{lambdacondn}
Let $\gamma>0$ be given and let $\zeta\in L^\infty(\Gamma_I)$ such that $|\zeta| \geq\zetalow >0 $ almost everywhere on $\Gamma^-_\text I$.
Then there is an $h_0>0$ such that, for each $0<h\leq h_0$, the function $\lambda$ in definition~\eqref{lambdadfn} satisfies
\begin{subequations}
\begin{alignat}{2}
\label{lambdainequal1} 0< |\lambda | &\leq \frac{\gamma}{h} &&\text{a.e.\ on $\Gamma_\text I$,}\\
\label{lambdaidentity1} 1-\lambda \frac{\zeta}{ \textrm{i}\kappa} &= \frac{h}{\gamma} \lambda 
					&&\text{a.e.\ on $\Gamma_\text I$,}\\
\label{lambdainequal2}\Big|\frac{\zeta}{ \textrm{i}\kappa}\Big(1-\lambda  \frac{\zeta}{ \textrm{i}\kappa}  \Big)\Big| 
& \leq  \frac{2h}{\gamma}&&\text{a.e.\ on $\Gamma_\text I$,} \\
\label{lambdaminbound} \lvert\lambda\rvert &\leq \frac{2\kappa}{\zetalow}
&&\text{a.e.\ on $\Gamma^-_\text I$,}\\ 
\label{lambdasplitbound}
\Re\lambda + \Im\lambda -\frac12\lvert\lambda\rvert
&\geq
\begin{cases}
\frac12\lvert\lambda\rvert &\text{a.e.\ on $\Gamma_\text I\setminus\Gamma^-_\text I$,}
\\[5pt]
\displaystyle-3\frac{\kappa}{\zetalow}        &\text{a.e. on $\Gamma^-_\text I$.}
\end{cases}
\end{alignat} 
\end{subequations}

\end{lemma}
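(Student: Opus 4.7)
The plan is to start from definition~\eqref{lambdadfn} and extract a single key estimate on $|1/\lambda|$; the five items then follow by elementary manipulations. Lemma~\ref{l:suffcond} supplies an $h_0>0$ such that condition~\eqref{e:zetahcond} holds for $0<h\leq h_0$, so the whole argument may rely on that condition together with the passivity assumption $\Re\zeta\geq 0$.

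\textbf{The key estimate.} Writing $\zeta/(\i\kappa) = \Im\zeta/\kappa - \i\Re\zeta/\kappa$ and expanding,
\begin{equation*}
\Big|\frac{h}{\gamma} + \frac{\zeta}{\i\kappa}\Big|^2 = \Big(\frac{h}{\gamma}\Big)^2 + \frac{2h\,\Im\zeta}{\gamma\kappa} + \frac{|\zeta|^2}{\kappa^2}.
\end{equation*}
By~\eqref{e:zetahcond}, $2h\Im\zeta/(\gamma\kappa) \geq -|\zeta|^2/(2\kappa^2)$, so
\begin{equation*}
\Big|\frac{1}{\lambda}\Big|^2 \geq \Big(\frac{h}{\gamma}\Big)^2 + \frac{|\zeta|^2}{2\kappa^2}.
\end{equation*}
The right-hand side is strictly positive, so $\lambda$ is finite and nonzero; dropping the second term on the right gives~\eqref{lambdainequal1}, and dropping the first term, combined with $|\zeta|\geq\zetalow$ on $\Gamma_\text I^-$, gives~\eqref{lambdaminbound}.

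Identity~\eqref{lambdaidentity1} is purely algebraic: multiplying the claimed equality by $1/\lambda = h/\gamma + \zeta/(\i\kappa)$ reduces it to a tautology. For~\eqref{lambdainequal2}, identity~\eqref{lambdaidentity1} lets one rewrite
\begin{equation*}
\frac{\zeta}{\i\kappa}\Big(1-\lambda\frac{\zeta}{\i\kappa}\Big) = \frac{h\lambda}{\gamma}\cdot\frac{\zeta}{\i\kappa},
\end{equation*}
so the inequality reduces to $|\lambda||\zeta|/\kappa \leq 2$. The latter follows from $|h\lambda/\gamma + \lambda\zeta/(\i\kappa)| = 1$ by the triangle inequality and the bound $|h\lambda/\gamma|\leq 1$ supplied by~\eqref{lambdainequal1}.

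For the sign estimate~\eqref{lambdasplitbound}, set $z = h/\gamma + \zeta/(\i\kappa)$ so that $\lambda = \bar z/|z|^2$. The passivity assumption $\Re\zeta\geq 0$ yields $\Im z = -\Re\zeta/\kappa \leq 0$, hence $\Im\lambda = \Re\zeta/(\kappa|z|^2) \geq 0$ everywhere on $\Gamma_\text I$. Off $\Gamma_\text I^-$, the additional assumption $\Im\zeta\geq 0$ forces $\Re z>0$ and thus $\Re\lambda\geq 0$; the elementary inequality $(c+d)^2\geq c^2+d^2$ for $c,d\geq 0$ then gives $\Re\lambda + \Im\lambda \geq |\lambda|$, which implies the first branch of~\eqref{lambdasplitbound}. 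On $\Gamma_\text I^-$, $\Im\lambda\geq 0$ and the trivial bound $\Re\lambda\geq -|\lambda|$ combine to $\Re\lambda + \Im\lambda - |\lambda|/2 \geq -3|\lambda|/2$, and~\eqref{lambdaminbound} closes the argument. The only nontrivial step is the initial key estimate; once that is in hand, each of~\eqref{lambdainequal1}--\eqref{lambdasplitbound} is a short calculation whose main subtlety is keeping track of signs.
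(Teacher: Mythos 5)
Your proof is correct and follows essentially the same route as the paper: the same expansion of $\lvert h/\gamma+\zeta/(\mathrm{i}\kappa)\rvert^2$ combined with condition~\eqref{e:zetahcond} is the single key estimate, and the five items are derived from it by the same elementary manipulations. The only (cosmetic) differences are that for~\eqref{lambdainequal2} you bound $\lvert\lambda\zeta\rvert/\kappa\le 2$ directly from $h\lambda/\gamma+\lambda\zeta/(\mathrm{i}\kappa)=1$ rather than rewriting the product as $h/\gamma-(h/\gamma)^2\lambda$, and for the second branch of~\eqref{lambdasplitbound} you use the crude bound $\Re\lambda\ge-\lvert\lambda\rvert$ together with~\eqref{lambdaminbound}, which is marginally shorter than the paper's computation and yields the same constant.
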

\begin{proof}
The given assumptions imply that Lemma~\ref{l:suffcond} applies, and there is thus a $h_0>0$ such that condition~\eqref{e:zetahcond} holds for $0<h\leq h_0$. 
The denominator of $|\lambda|^2$, using the proposed definition~\eqref{lambdadfn}, satisfies
\begin{equation}\label{e:ldenomineq}
\left|\frac{h}\gamma + \frac{\zeta}{\i\kappa}\right|^2 = 
\left(\frac{h}\gamma + \frac{\Im\zeta}{\kappa}\right)^2 + \left(\frac{\Re\zeta}{\kappa}\right)^2
= \frac{h^2}{\gamma^2} + \frac{\lvert\zeta\rvert^2}{\kappa^2} + 2\frac{h\Im\zeta}{\gamma\kappa}
\geq \frac{h^2}{\gamma^2} + \frac{\lvert\zeta\rvert^2}{2\kappa^2}>0,
\end{equation}
where condition~\eqref{e:zetahcond} and the fact that $h>0$ have been used in the first and second inequality, respectively. 
Expression~\eqref{e:ldenomineq} shows that the $\lambda$ in definition~\eqref{lambdadfn} is well defined and satisfies the right inequality in~\eqref{lambdainequal1}.
Identity~\eqref{lambdaidentity1} then follows immediately from the definition of $\lambda$. 
The left inequality in~\eqref{lambdainequal1} follows from  that
\begin{equation}
\lvert\lambda\rvert^{-1} = \left|\frac{h}\gamma + \frac{\zeta}{\i\kappa}\right|\leq \frac{h}\gamma + \frac{|\zeta|}{\kappa}
\end{equation}
is bounded.

Moreover, by identity~\eqref{lambdaidentity1}, we have
\begin{equation} 
\frac{\zeta}{ \textrm{i}\kappa}\Big(1-\lambda  \frac{\zeta}{ \textrm{i}\kappa}  \Big)  = \frac{\zeta}{ \textrm{i}\kappa}  \frac{h}{ \gamma} \lambda = \frac{h}{ \gamma} \Big(\frac{h}{ \gamma}+\frac{\zeta}{ \textrm{i}\kappa}  -\frac{h}{ \gamma} \Big)\lambda = \frac{h}{ \gamma} \Big(\lambda^{-1} -\frac{h}{ \gamma} \Big)\lambda = \frac{h}{ \gamma}  - \Big(\frac{h}{\gamma}\Big)^2 \lambda,
\end{equation} 
which, by the triangle inequality and inequality~\eqref{lambdainequal1}, yields the bound~\eqref{lambdainequal2}, that is,
\begin{equation}
\Big|\frac{\zeta}{ \textrm{i}\kappa}\Big(1-\lambda  \frac{\zeta}{ \textrm{i}\kappa}  \Big)\Big| \leq \frac{h}{ \gamma}  +\Big(\frac{h}{\gamma}\Big)^2 |\lambda| \leq  \frac{2h}{\gamma}.
\end{equation} 

Inequality~\eqref{e:ldenomineq} implies the bound
\begin{equation}
\left|\frac{h}\gamma + \frac{\zeta}{\i\kappa}\right| \geq \frac{\lvert\zeta\rvert}{\sqrt{2}\kappa}.
\end{equation}
Thus, on $\Gamma^-_\text I$, 
\begin{equation}\label{lambdaGMbound}
\lvert\lambda\rvert \leq\frac{\sqrt{2}\kappa}{\lvert\zeta\rvert} \leq \frac{\sqrt{2}\kappa}{\zetalow}
\end{equation} 
where the second inequality follows from that $\lvert\zeta\rvert\geq\zetalow$.
Thus inequality~\eqref{lambdaminbound} holds.

To show the last bounds, we first note that, from the definition of $\lambda$,
\begin{equation}\label{lambdaRI} 
 \lambda =  \frac{{h}/{\gamma}+\overline{{\zeta}/{ \textrm{i}\kappa}}}{|{h}/{\gamma} + {\zeta}/{\textrm{i}\kappa}|^2} 
 = \Bigl(\frac{h}{\gamma}+ \frac{\Im\zeta}{ \kappa}+\textrm{i}\frac{\Re\zeta}{ \kappa} \Bigr){|\lambda|^2},
\end{equation} 
which means that a.e.\ on $\Gamma_\text I\setminus\Gamma^-_\text I$, since $\Im\zeta\geq0$ and $\Re\zeta\geq0$ and $\lvert\Im z\rvert + \lvert\Re z\rvert\geq\lvert z\rvert$,
\begin{equation}
\Re\lambda + \Im\lambda -\frac12\lvert\lambda\rvert =  \lvert\Re\lambda\rvert + \lvert\Im\lambda\rvert -\frac12\lvert\lambda\rvert\geq \frac12\lvert\lambda\rvert.
\end{equation}

On $\Gamma^-_\text I$, expression~\eqref{lambdaRI} together with the bound~\eqref{lambdaGMbound}, the fact that $\Im\zeta <0$, $\Re\zeta\geq0$, and $\lvert\zeta\rvert\geq\zetalow$  in that region, imply that
\begin{align}
\Re\lambda + \Im\lambda -\frac12\lvert\lambda\rvert
&=  \Bigl(\frac{h}{\gamma}+ \frac{\Im\zeta}{ \kappa}\Bigr)\lvert\lambda\rvert^2
+ \frac{\Re\zeta}{ \kappa}{\lvert\lambda\rvert^2} -\frac12\lvert\lambda\rvert
\nonumber\\
&\geq -\frac{\lvert\Im\zeta\rvert}{\kappa}\lvert\lambda\rvert^2 -\frac12\lvert\lambda\rvert
\geq -3\frac{\kappa}{\zetalow}.
\end{align}
 \end{proof}

The analysis of the method will be carried out in the mesh and wave number dependent norm 
\begin{align}\label{triplenorm}
|||\, p_h\, |||^2 = \int\limits_{\mathclap{\Omega_1\cup\Omega_2}} \lvert\nabla p_h\rvert^2  + \kappa^2\int\limits_{\mathclap{\Omega_1\cup\Omega_2}} \lvert p_h\rvert^2  +  \frac{1}{\gamma}  \int\limits_{\Gamma_{\textrm{I}}}   h\Big\lvert\left\{\frac{\partial p_h}{\partial n}\right\}\Big\rvert^2+ \int\limits_{\Gamma_{\textrm{I}}}\lvert\lambda\rvert\big\lvert\llbracket { p_h}\rrbracket\big\rvert^2.
\end{align}   
Note that the coefficient in the last integral does not vanish due to inequality~\eqref{lambdainequal1}.

We will make use the following standard inverse inequality, whose proof relies on the mesh being quasi uniform.
\begin{lemma}\label{l:inverseineq}
For $p_h\in V_h$ there exist a constant $C_I>0$ such that
\begin{equation}\label{inverseineq}
\int\limits_{\Gamma_{\textrm{I}}}  h  \Big\lvert\left\{\frac{\partial p_h}{\partial n}\right\}\Big\rvert^2\leq C_I \int\limits_{\mathclap{\Omega_1\cup\Omega_2}}\lvert\nabla p_h\rvert^2,
\end{equation}
where $C_I$ depends on the polynomial approximation  order and the mesh regularity and quasi-uniformity constants.
\end{lemma}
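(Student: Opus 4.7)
The plan is to reduce the estimate to a standard local inverse trace inequality on each mesh element adjacent to $\Gamma_{\text I}$, sum over these elements, and use quasi-uniformity to pass from element-wise diameters $h_K$ to the global $h$. First, I would use the elementary bound $\bigl|\{\partial p_h/\partial n\}\bigr|^2\leq \tfrac12(|\partial p_1/\partial n|^2+|\partial p_2/\partial n|^2)$ so that it suffices to establish, for $i=1,2$, an inequality of the form
\begin{equation*}
\int\limits_{\Gamma_{\text I}} h \Big|\frac{\partial p_h|_{\Omega_i}^{\null}}{\partial n}\Big|^2 \leq C \int\limits_{\Omega_i} |\nabla p_h|^2.
\end{equation*}

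Next I would localize: for each element $K\in\mathcal T_i^h$ with a face $F=K\cap\Gamma_{\text I}$ of positive measure, I would prove the local estimate
\begin{equation*}
h_K\int\limits_{F} |\nabla p_h|^2 \leq C\int\limits_{K} |\nabla p_h|^2,
\end{equation*}
by mapping $K$ affinely to a reference tetrahedron $\hat K$ with reference face $\hat F$, invoking the equivalence of the $L^2(\hat F)$- and $L^2(\hat K)$-norms on the finite-dimensional space of component-wise polynomials of degree at most $k-1$ (these norms are equivalent since $L^2(\hat F)$ vanishes on no nonzero polynomial of bounded degree on $\hat K$ when $\hat F$ has positive surface measure), and then scaling back. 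The scaling picks up the factor $h_K^{-1}$, which is the source of the $h_K$ on the left-hand side. The constant in the reference estimate depends only on $k$ and on $\hat K$; the shape-regularity assumption $h_K/\rho_K\leq C$, together with Remark~\ref{r_nondegen} that propagates non-degeneracy to the surface faces, ensures that the Jacobians of the affine maps are controlled uniformly, so the constants pass through unchanged up to shape-regularity factors.

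Summing the local estimate over all elements $K\in\mathcal T_i^h$ with a face on $\Gamma_{\text I}$, and using the quasi-uniformity bound $h\leq C' h_K$ to replace $h_K$ by $h$ on the left-hand side, gives
\begin{equation*}
\int\limits_{\Gamma_{\text I}} h\Big|\frac{\partial p_h|_{\Omega_i}^{\null}}{\partial n}\Big|^2 \leq C\sum_{\substack{K\in\mathcal T_i^h\\ |K\cap\Gamma_{\text I}|>0}} \int\limits_K |\nabla p_h|^2 \leq C\int\limits_{\Omega_i} |\nabla p_h|^2.
\end{equation*}
Adding the two sides and using the triangle inequality bound above then yields the claim with $C_I$ depending only on $k$ and the mesh regularity and quasi-uniformity constants.

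The main obstacle is really just the local inverse trace inequality on the reference element, which is where the dependence on the polynomial degree $k$ enters; it is however entirely standard and relies only on the equivalence of norms on a finite-dimensional space. The rest is bookkeeping: the triangle inequality to split the average, the scaling to extract $h_K$, and quasi-uniformity to replace $h_K$ by $h$.
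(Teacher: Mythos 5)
Your proof is essentially correct, but note that the paper does not actually prove this lemma: it simply states it and refers to Warburton and Hesthaven for proofs and explicit constants. What you have written is the standard scaling argument that underlies such references, and all the quantitative steps are sound: the bound $\bigl|\{\partial p_h/\partial n\}\bigr|^2\leq\tfrac12(|\partial p_1/\partial n|^2+|\partial p_2/\partial n|^2)$ correctly decouples the two (non-matching) meshes so that each side can be treated on its own triangulation; the localization to elements with a face on $\Gamma_{\text I}$ is legitimate because each $\mathcal T_i^h$ conforms to $\Omega_i$, so $\Gamma_{\text I}$ is a union of full element faces of $\mathcal T_i^h$; the scaling to the reference element produces the factor $h_K^{-1}$ with a constant controlled by shape regularity; and quasi-uniformity lets you replace $h_K$ by $h$.

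The one point you should repair is the parenthetical justification of the reference-element estimate. The $L^2(\hat F)$- and $L^2(\hat K)$-norms are \emph{not} equivalent on polynomials of bounded degree on $\hat K$: a nonzero polynomial can vanish identically on the face $\hat F$ (for instance any multiple of the affine function cutting out the plane of $\hat F$), so $\|\cdot\|_{L^2(\hat F)}$ is only a seminorm on that space and the reverse inequality fails. Fortunately you only need the one direction $\|\hat q\|_{L^2(\hat F)}\leq C\|\hat q\|_{L^2(\hat K)}$, and this holds for the simpler reason that any seminorm on a finite-dimensional space is dominated by any norm (equivalently, the trace map is a linear map between finite-dimensional normed spaces and hence bounded). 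With that correction the argument is complete and yields a constant depending only on the polynomial degree, the shape-regularity constant, and the quasi-uniformity constant, as claimed.
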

Warburton and Hesthaven~\cite{WaHe03} provide proofs for inverse estimates of this type for triangular and tetrahedral meshes and present explicit expressions for the constant $C_I$.

For the purpose of analysis, we rewrite bilinear form~\eqref{discretebilinear} in the following way,
\begin{align}\label{discretebilinear2}
a_{\lambda}(p_h,q_h)& = a_{0}(p_h,q_h)
 - \int\limits_{\Gamma_{\textrm{I}}} (1-\lambda\frac{\zeta}{ \textrm{i}\kappa})\llbracket { q_h}\rrbracket\left\{\frac{\partial p_h}{\partial n}\right\} 
-  \int\limits_{\Gamma_{\textrm{I}}} \big(1-\lambda\frac{\zeta}{ \textrm{i}\kappa} \big)\llbracket p_h\rrbracket\left\{\frac{\partial q_h}{\partial n}\right\}  \nonumber\\
&\quad    -\int\limits_{\Gamma_{\textrm{I}}}    \frac{\zeta}{ \textrm{i}\kappa}\big(1-\lambda  \frac{\zeta}{ \textrm{i}\kappa}  \big)\left\{\frac{\partial p_h}{\partial n}\right\}\left\{\frac{\partial q_h}{\partial n}\right\} +   \int\limits_{\Gamma_{\textrm{I}}} \lambda\llbracket p_h\rrbracket\llbracket {q_h}\rrbracket .
\end{align}

\begin{theorem}[Continuity]\label{discContinequality}
Let $\lambda$ be as in definition~\eqref{lambdadfn} and assume that condition~\eqref{e:zetahcond} is satisfied. 
Then there is a  constant $C_c$,  dependent on $\kappa$ and trace inequality constant  such that
\begin{equation}
\lvert a_{\lambda}(p_h,q_h)\rvert \leq  C_c\,|||p_h|||\;|||q_h|||\qquad\forall p_h,q_h\in V_h\cup H^2(\Omega_1\cup\Omega_2).
\end{equation}
\end{theorem}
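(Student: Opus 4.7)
The plan is to work from the rewritten expression~\eqref{discretebilinear2}, which exhibits five groups of terms (the volume/boundary $a_0$ part plus four interface integrals), and to bound each group separately by $|||p_h|||\,|||q_h|||$ using only Cauchy--Schwarz together with the identities in Lemma~\ref{lambdacondn} and the trace inequality~\eqref{trace2}. A crucial feature of this strategy is that it never invokes the inverse inequality, so the same bounds apply to $H^2(\Omega_1\cup\Omega_2)$ test functions as well as to $V_h$.

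For the $a_0$ part I would handle the two volume integrals by straight Cauchy--Schwarz, using $\|\nabla p_h\|_{L^2}\le|||p_h|||$ and $\kappa\|p_h\|_{L^2}\le|||p_h|||$. The $\Gamma_{\mathrm{io}}$ term $\i\kappa\!\int_{\Gamma_{\mathrm{io}}}p_hq_h$ is the only slightly delicate one: after Cauchy--Schwarz on the surface I would apply the multiplicative trace inequality~\eqref{trace2} on each $\Omega_i$, then combine $\kappa\|p_h\|_{L^2}^{1/2}$ with $\|p_h\|_{H^1}^{1/2}\le(1+1/\kappa^2)^{1/4}|||p_h|||^{1/2}$ to absorb everything into $|||p_h|||\,|||q_h|||$ up to a constant depending on $\kappa$ and $c_2$.

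The three new interface integrals are handled through Lemma~\ref{lambdacondn}. For the two consistency terms involving $(1-\lambda\zeta/(\i\kappa))$ I would replace that factor by $h\lambda/\gamma$ using identity~\eqref{lambdaidentity1}, then split $h|\lambda|/\gamma=|\lambda|^{1/2}\cdot(h|\lambda|^{1/2}/\gamma)$ and apply Cauchy--Schwarz; the bound $|\lambda|\le\gamma/h$ from~\eqref{lambdainequal1} gives $h^2|\lambda|/\gamma^2\le h/\gamma$, so one factor lands exactly on the $\int_{\Gamma_\text{I}}|\lambda||\jump\cdot|^2$ box and the other on the $\int_{\Gamma_\text{I}}h|\{\partial_n\cdot\}|^2/\gamma$ box in the triple norm. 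For the double-flux term with coefficient $\zeta(1-\lambda\zeta/(\i\kappa))/(\i\kappa)$ I would use the pointwise bound $2h/\gamma$ from~\eqref{lambdainequal2} and Cauchy--Schwarz directly onto the $h$-weighted flux box. Finally, the symmetric penalty term $\int_{\Gamma_\text{I}}\lambda\jump{p_h}\jump{q_h}$ is immediate from Cauchy--Schwarz with weight $|\lambda|$.

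I do not expect a real obstacle here: the rewritten form~\eqref{discretebilinear2} was evidently designed so that Lemma~\ref{lambdacondn} slots its identities into exactly the places needed to match the triple norm. The only bit that requires attention is the $\Gamma_{\mathrm{io}}$ boundary term, where the $\kappa$-dependence in $C_c$ is unavoidable because the triple norm weights volume $L^2$ by $\kappa^2$ while the trace inequality produces a single power of $\|p_h\|_{H^1}$; this is the reason the statement allows $C_c$ to depend on $\kappa$ and on the trace constant. Summing the five bounds with the triangle inequality then yields the claimed continuity with $C_c=C_c(\kappa,c_2)$.
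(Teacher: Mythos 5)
Your proposal is correct and follows essentially the same route as the paper's own proof: term-by-term estimation of the rewritten form~\eqref{discretebilinear2}, using identity~\eqref{lambdaidentity1} with the bound $|\lambda|\le\gamma/h$ to distribute the consistency terms onto the $|\lambda|$-weighted jump and $h/\gamma$-weighted flux parts of the triple norm, the pointwise bound~\eqref{lambdainequal2} for the double-flux term, and a trace inequality for the $\Gamma_{\mathrm{io}}$ contribution of $a_0$ (the paper cites~\eqref{trace1} where you use~\eqref{trace2}, an immaterial difference). No gaps.
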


 \begin{proof}
Using identity~\eqref{lambdaidentity1}, Cauchy--Schwarz inequality, and inequality~\eqref{lambdainequal1}, we bound the second term on the right side of expression~\eqref{discretebilinear2} as follows,
\begin{align}\label{discretebilinear22}
\biggl| \int\limits_{\Gamma_{\textrm{I}}} \bigl(1-\lambda\frac{\zeta}{ \textrm{i}\kappa}\bigr)\llbracket { q_h}\rrbracket\left\{\frac{\partial p_h}{\partial n}\right\} \biggr| 
&\leq  \int\limits_{\Gamma_{\textrm{I}}} \frac{h}{\gamma}\lvert\lambda\rvert\bigg|\llbracket { q_h}\rrbracket\left\{\frac{\partial p_h}{\partial n}\right\} \biggr| 
\leq  \Bigl(\int\limits_{\Gamma_{\textrm{I}}}  \frac{h}{\gamma}|\lambda |^2\big|\llbracket { q_h}\rrbracket\big|^2 \int\limits_{\Gamma_{\textrm{I}}} \frac{h}{\gamma}\Big|\left\{\frac{\partial p_h}{\partial n}\right\} \Big|^2 \Bigr)^{1/2} \nonumber\\
&\leq  \Bigl(  \int\limits_{\Gamma_{\textrm{I}}} \lvert\lambda\rvert\bigl|\llbracket { q_h}\rrbracket\bigr|^2 \int\limits_{\Gamma_{\textrm{I}}} \frac{h}{\gamma}\biggl|\left\{\frac{\partial p_h}{\partial n}\right\}\biggr|^2 \Bigr)^{1/2}.
\end{align} 
A bound for the third term can be obtained similarly.  
For the fourth term, using inequality~\eqref{lambdainequal2} and the Cauchy--Schwarz inequality, we get
\begin{equation}
\biggl| \int\limits_{\Gamma_{\textrm{I}}}  \frac{\zeta}{ \textrm{i}\kappa}\bigl(1-\lambda  \frac{\zeta}{ \i\kappa}\bigr) \left\{\frac{\partial p_h}{\partial n}\right\}\left\{\frac{\partial q_h}{\partial n}\right\}\biggr| 
\leq
2\Bigl(\int\limits_{\Gamma_{\textrm{I}}}  \frac{h}{\gamma} \biggl|\left\{\frac{\partial p_h}{\partial n}\right\}\biggr|^2    
\int\limits_{\Gamma_{\textrm{I}}}\frac{h}{\gamma} \biggl|\left\{\frac{\partial q_h}{\partial n}\right\}\biggr|^2 \Bigr)^{1/2}.
 \end{equation} 
By  Cauchy--Schwarz inequality, we find the following bound of the fifth term,
  \begin{align}
\biggl|\int\limits_{\Gamma_{\textrm{I}}} \lambda \llbracket p_h\rrbracket\llbracket {q_h}\rrbracket\biggr|  
\leq \Bigl( \int\limits_{\Gamma_{\textrm{I}}}\lvert\lambda|\bigl\rvert\llbracket { p_h}\rrbracket\bigr|^2  \int\limits_{\Gamma_{\textrm{I}}}\lvert\lambda\rvert\bigl|\llbracket { q_h}\rrbracket\bigr|^2 \Bigr)^{1/2}.
 \end{align}
Finally, using trace inequality~\eqref{trace1} and Cauchy--Schwarz inequality, we obtain that
\begin{align}\label{discretebilinear25}
|a_0(p_h,q_h)| & \leq c_{\kappa}\|p_h\|_{H^1{(\Omega_1\cup\Omega_2})}\|q_h\|_{H^1{(\Omega_1\cup\Omega_2)}},
 \end{align}  
where $c_{\kappa}$ depends on $\kappa$ and the trace inequality constant.
The conclusion then follows from bounds~\eqref{discretebilinear22}--\eqref{discretebilinear25}. 
\end{proof}
\begin{theorem}[Discrete Gårding inequality]\label{discGordinginequality}
Let $\lambda$ be as in definition~\eqref{lambdadfn}, let the conditions of Lemma~\ref{l:suffcond} hold, and let $\gamma \geq 16C_I$, where $C_I$ is the constant in Lemma~\ref{l:inverseineq}.
Then, 
\begin{equation}
\lvert a_{\lambda}(p_h,p_h)\rvert + 2\kappa\|J p_h\|^2_{U}\geq \frac{1}{4}|||\,p_h\,|||^2\qquad \forall p_h\in V_h.
\end{equation}
\end{theorem}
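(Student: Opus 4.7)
The plan is to reduce the claim to a real inequality via the elementary bound $|a_\lambda(p_h, \bar p_h)| \geq |\Re a_\lambda(p_h, \bar p_h)|$ and analogously for $\Im a_\lambda$, which together give $2|a_\lambda(p_h, \bar p_h)| \geq \Re a_\lambda(p_h, \bar p_h) + \Im a_\lambda(p_h, \bar p_h)$. It therefore suffices to prove
\[
\Re a_\lambda(p_h, \bar p_h) + \Im a_\lambda(p_h, \bar p_h) + 4\kappa \|Jp_h\|_U^2 \geq \tfrac{1}{2}|||\,p_h\,|||^2,
\]
after which dividing by $2$ delivers the claim. I would start from the expansion~\eqref{discretebilinear2} applied at $(p_h, \bar p_h)$. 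The bulk form $a_0$ contributes $\int|\nabla p_h|^2 - \kappa^2\int|p_h|^2 + \kappa\int_{\Gamma_{\textrm{io}}}|p_h|^2$, of which I discard the nonnegative boundary term. The last (penalty) integral $\int\lambda|\jump{p_h}|^2$ contributes $\int(\Re\lambda + \Im\lambda)|\jump{p_h}|^2$, which by~\eqref{lambdasplitbound} is bounded below by $\int_{\Gamma_{\textrm{I}}\setminus\Gamma_{\textrm{I}}^-}|\lambda||\jump{p_h}|^2 + \int_{\Gamma_{\textrm{I}}^-}\bigl(\tfrac{1}{2}|\lambda| - 3\kappa/\zetalow\bigr)|\jump{p_h}|^2$.

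For the two symmetric cross terms I would apply identity~\eqref{lambdaidentity1} to write the factor $1 - \lambda\zeta/(\textrm{i}\kappa)$ as $(h/\gamma)\lambda$, then use Cauchy--Schwarz together with $(h/\gamma)|\lambda| \leq 1$ from~\eqref{lambdainequal1}, followed by Young's inequality with parameter $\epsilon = 1/2$, to obtain a total $|\Re+\Im|$ bound of $\tfrac{1}{2}\int|\lambda||\jump{p_h}|^2 + 4\int(h/\gamma)|\{\partial p_h/\partial n\}|^2$. The fourth integral of~\eqref{discretebilinear2} is handled directly via~\eqref{lambdainequal2} together with $|\Re z + \Im z| \leq \sqrt{2}\,|z|$, giving a bound $2\sqrt{2}\int(h/\gamma)|\{\partial p_h/\partial n\}|^2$. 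After adding $4\kappa\|Jp_h\|_U^2 = 4\kappa^2\int|p_h|^2 + (4\kappa/\zetalow)\int_{\Gamma_{\textrm{I}}^-}|\jump{p_h}|^2$, the coefficient of $\int|p_h|^2$ becomes $3\kappa^2$ and the coefficient of $|\jump{p_h}|^2$ on $\Gamma_{\textrm{I}}^-$ becomes $\tfrac{1}{2}|\lambda| + \kappa/\zetalow$.

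The crucial step is to invoke~\eqref{lambdaminbound}, which gives $|\lambda| \leq 2\kappa/\zetalow$ on $\Gamma_{\textrm{I}}^-$, hence $\kappa/\zetalow \geq |\lambda|/2$ and $\tfrac{1}{2}|\lambda| + \kappa/\zetalow \geq |\lambda|$. The $\Gamma_{\textrm{I}}^-$ integral therefore combines with the $\Gamma_{\textrm{I}}\setminus\Gamma_{\textrm{I}}^-$ integral to give $\int_{\Gamma_{\textrm{I}}}|\lambda||\jump{p_h}|^2$, of which the cross-term subtraction leaves $\tfrac{1}{2}\int|\lambda||\jump{p_h}|^2$. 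A final application of the inverse inequality~\eqref{inverseineq} under the hypothesis $\gamma \geq 16 C_I$ absorbs the remaining gradient-face contributions since $(4+2\sqrt{2})\int(h/\gamma)|\{\partial p_h/\partial n\}|^2 \leq ((4+2\sqrt{2})C_I/\gamma)\int|\nabla p_h|^2 \leq \tfrac{7}{16}\int|\nabla p_h|^2$, leaving $\tfrac{9}{16}\int|\nabla p_h|^2$. Splitting $\tfrac{9}{16} = \tfrac{1}{2} + \tfrac{1}{16}$ and using the inverse inequality once more to control $\tfrac{1}{2\gamma}\int h|\{\partial p_h/\partial n\}|^2 \leq (C_I/2\gamma)\int|\nabla p_h|^2 \leq \tfrac{1}{32}\int|\nabla p_h|^2$ then yields the desired lower bound $\tfrac{1}{2}|||\,p_h\,|||^2$.

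The delicate point --- and what I expect to be the main obstacle to verify carefully --- is the combined bookkeeping on $\Gamma_{\textrm{I}}^-$: the negative constant $-3\kappa/\zetalow$ from~\eqref{lambdasplitbound}, the positive $4\kappa/\zetalow$ from $\|Jp_h\|_U^2$ (the factor $4$ rather than $2$ arising precisely because the reduction $|a_\lambda| \geq \tfrac{1}{2}(\Re a_\lambda + \Im a_\lambda)$ costs a factor of $2$ in the budget), and the upper bound $|\lambda| \leq 2\kappa/\zetalow$ must dovetail so that a positive multiple of $|\lambda|$ survives in the surface-wave region. The constants are tight: the margin $\tfrac{1}{16}\int|\nabla p_h|^2$ generated by the hypothesis $\gamma \geq 16 C_I$ is exactly what is needed to cover the $\tfrac{1}{2\gamma}\int h|\{\partial p_h/\partial n\}|^2$ component of the norm after the gradient-face cross terms have been absorbed.
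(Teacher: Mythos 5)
Your proposal is correct and follows essentially the same route as the paper's proof: the reduction $2\lvert a_\lambda\rvert \geq \Re a_\lambda + \Im a_\lambda$, the term-by-term treatment of expansion~\eqref{discretebilinear2} via the properties \eqref{lambdainequal1}--\eqref{lambdasplitbound} of Lemma~\ref{lambdacondn}, the cancellation on $\Gamma_\text{I}^-$ against the added $4\kappa\lVert Jp_h\rVert^2_U$, and the absorption of the face-gradient terms by the inverse inequality under $\gamma\geq 16C_I$. The only differences are cosmetic bookkeeping (you keep $2\sqrt2$ where the paper rounds to $3$, and you regenerate the $\tfrac{1}{2\gamma}\int_{\Gamma_\text{I}} h\lvert\{\partial p_h/\partial n\}\rvert^2$ component of the norm from the leftover gradient margin rather than retaining it via the split $-7/\gamma=-8/\gamma+1/\gamma$), and all constants check out.
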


 \begin{proof}
Choosing $q_h = \overline{p}_h$ in expression~\eqref{discretebilinear2} yields
\begin{align}\label{quadform}
a_{\lambda}(p_h,\overline{p}_h)& =  a_{0}(p_h,\overline{p}_h)  
- 2 \int\limits_{\Gamma_{\textrm{I}}}   \bigl(1-\lambda\frac{\zeta}{ \i\kappa} \bigr) \Re\Bigl(\llbracket \overline{p}_h\rrbracket\left\{\frac{\partial p_h}{\partial n}\right\} \Bigr)
-\int\limits_{\Gamma_{\textrm{I}}}\frac{\zeta}{ \i\kappa}\bigl(1-\lambda  \frac{\zeta}{ \i\kappa}  \bigr) \biggl|\left\{\frac{\partial p_h}{\partial n}\right\}\biggr|^2 +   \int\limits_{\Gamma_{\textrm{I}}} \lambda\bigl|\llbracket p_h\rrbracket\bigr|^2 .
\end{align} 
From expression~\eqref{quadform}, inequality $\Re z + \Im z \leq\sqrt 2|z|$, and the fact that $\Im a_0(p_h,\overline p_h)\geq0$ follow that
\begin{align}\label{ReIm_est}
&\Re a_{\lambda}(p_h,\overline{p}_h) + \Im a_{\lambda}(p_h,\overline{p}_h) 
\geq  \Re a_{0}(p_h,\overline{p}_h)  
- 2\sqrt2 \int\limits_{\Gamma_{\textrm{I}}}\Bigl|\bigl(1-\lambda\frac{\zeta}{ \i\kappa} \bigr)\Bigr| \biggl|\llbracket \overline{p}_h\rrbracket\left\{\frac{\partial p_h}{\partial n}\right\}\biggr|
 \nonumber\\&\qquad\qquad    
-\sqrt2\int\limits_{\Gamma_{\textrm{I}}}\Bigl|\frac{\zeta}{ \i\kappa}\big(1-\lambda  \frac{\zeta}{ \i\kappa}  \big)\Bigr|\bigg|\!\left\{\frac{\partial p_h}{\partial n}\right\}\!\bigg|^2 +   \int\limits_{\Gamma_{\textrm{I}}}\bigl(\Re\lambda+\Im\lambda\bigr) \bigl|\llbracket p_h\rrbracket\bigr|^2 .
\end{align} 
Consider the second term on the right side of inequality~\eqref{ReIm_est}. 
By identity~\eqref{lambdaidentity1} and inequalities $2ab\leq 2 a^{2}+ b^2/2$ and~\eqref{lambdainequal1},  we obtain
\begin{align}\label{bound1}
 & 2\sqrt2 \int\limits_{\Gamma_{\textrm{I}}}\Bigl|\bigl(1-\lambda\frac{\zeta}{ \i\kappa} \bigr)\Bigr| \biggl|\llbracket \overline{p}_h\rrbracket\left\{\frac{\partial p_h}{\partial n}\right\}\biggr|
  \leq 2\sqrt2 \int\limits_{\Gamma_{\textrm{I}}}\frac{h}{\gamma}\lvert\lambda\rvert \bigg|\llbracket \overline{p}_h\rrbracket\left\{\frac{\partial p_h}{\partial n}\right\} \bigg| \nonumber \\
&\qquad \leq  4\int\limits_{\Gamma_{\textrm{I}}}  \frac{h^2}{\gamma^2}\lvert\lambda\rvert \biggl|\left\{\frac{\partial p_h}{\partial n}\right\} \biggr|^2 
+\frac12\int\limits_{\Gamma_{\textrm{I}}}\lvert\lambda\rvert \big|\llbracket  p_h\rrbracket\big|^2 
\leq  4\int\limits_{\Gamma_{\textrm{I}}}  \frac{h}{\gamma}\biggl|\left\{\frac{\partial p_h}{\partial n}\right\}\biggr|^2 
+\frac12\int\limits_{\Gamma_{\textrm{I}}}  \lvert\lambda\rvert \bigl|\llbracket  p_h\rrbracket\bigr|^2.
\end{align}
For the third term on the right side of inequality~\eqref{ReIm_est}, using expression~\eqref{lambdainequal2}, we  obtain the bound
\begin{align}\label{bound2}
\sqrt2\int\limits_{\Gamma_{\textrm{I}}}\Bigl|\frac{\zeta}{ \i\kappa}\big(1-\lambda  \frac{\zeta}{ \i\kappa}  \big)\Bigr|\biggl|\left\{\frac{\partial p_h}{\partial n}\right\}\biggr|^2 
\leq 3 \int\limits_{\Gamma_{\textrm{I}}}  \frac{h}{\gamma}\biggl| \left\{\frac{\partial p_h}{\partial n}\right\}\biggr|^2.
\end{align}
Substituting inequalities~\eqref{bound1} and~\eqref{bound2} into expression~\eqref{ReIm_est}, we find that
\begin{align}
&\Re a_{\lambda}(p_h,\overline{p}_h) + \Im a_{\lambda}(p_h,\overline{p}_h)     
\nonumber\\
&\qquad\geq\Re  a_{0}(p_h,\overline{p}_h)  
- \frac{7}{\gamma}\int\limits_{\Gamma_{\textrm{I}}} h\biggl| \left\{\frac{\partial p_h}{\partial n}\right\}\biggr|^2
+   \int\limits_{\Gamma_{\textrm{I}}} \Big(\Re\lambda+\Im\lambda -\frac12\lvert\lambda\rvert\Big)\big|\llbracket { p_h}\rrbracket\big|^2  
\nonumber\\
&\qquad = \Re  a_{0}(p_h,\overline{p}_h)  -  \frac{8}{\gamma}  \int\limits_{\Gamma_{\textrm{I}}}  h \Big| \left\{\frac{\partial p_h}{\partial n}\right\}\Big|^2 +\frac{1}{\gamma}  \int\limits_{\Gamma_{\textrm{I}}}  h \Big| \left\{\frac{\partial p_h}{\partial n}\right\}\Big|^2  
+ \int\limits_{\Gamma_{\textrm{I}}} \Big(\Re\lambda+\Im\lambda -\frac12\lvert\lambda\rvert\Big)\big|\llbracket { p_h}\rrbracket\big|^2.
\end{align} 
By definition~\eqref{bilinear0}, inverse inequality~\eqref{inverseineq}, and since $\gamma\geq16C_I$, we obtain
\begin{align}\label{discretebilinear8}
&\Re a_{\lambda}(p_h,\overline{p}_h) + \Im a_{\lambda}(p_h,\overline{p}_h)  \geq  \frac{1}{2}\int\limits_{\mathclap{\Omega_1\cup\Omega_2}}\lvert\nabla p_h\rvert^2  - \kappa^2  \int\limits_{\mathclap{\Omega_1\cup\Omega_2}} \lvert p_h\rvert^2  
+  \frac{1}{\gamma} \int\limits_{\Gamma_{\textrm{I}}}  h \biggl| \left\{\frac{\partial p_h}{\partial n}\right\}\biggr|^2   
\nonumber\\
&\qquad\qquad\qquad\qquad  +  \Bigl(\frac{1}{2}  -  \frac{8}{\gamma} C_I\Bigr)\int\limits_{\mathclap{\Omega_1\cup\Omega_2}}\lvert\nabla p_h\rvert^2
+ \int\limits_{\Gamma_{\textrm{I}}} \Bigl(\Re\lambda + \Im\lambda -\frac12\bigl|\lambda\bigr|\Bigr)\bigl|\llbracket { p_h}\rrbracket\bigr|^2\nonumber\\
&\qquad\qquad\qquad \geq  \frac{1}{2}\int\limits_{\mathclap{\Omega_1\cup\Omega_2}}\lvert\nabla p_h\rvert^2  - \kappa^2  \int\limits_{\mathclap{\Omega_1\cup\Omega_2}} \lvert p_h\rvert^2  +  \frac{1}{\gamma}  \int\limits_{\Gamma_{\textrm{I}}}  h \biggl| \left\{\frac{\partial p_h}{\partial n}\right\}\biggr|^2   
+ \int\limits_{\Gamma_{\textrm{I}}} \Bigl(\Re\lambda + \Im\lambda -\frac12\lvert\lambda\rvert\Bigr)\bigl|\llbracket { p_h}\rrbracket\bigr|^2.
\end{align} 
Inequalities~\eqref{lambdasplitbound} and~\eqref{lambdaminbound} yields that the last term in expression~\eqref{discretebilinear8} satisfies
\begin{align}\label{jumptermest}
\int\limits_{\Gamma_{\textrm{I}}} \Bigl(\Re\lambda + \Im\lambda -\frac12\lvert\Re\lambda\rvert\Bigr)\bigl|\llbracket { p_h}\rrbracket\bigr|^2
&\geq \frac12\int\limits_{\Gamma_{\textrm{I}}} \lvert\lambda\rvert\big\lvert\llbracket { p_h}\rrbracket\big\rvert^2
- \int\limits_{\Gamma^-_{\textrm{I}}}\Bigl(\frac12\lvert\lambda\rvert + 3\frac{\kappa}{\zetalow}\Bigr)\big\lvert\llbracket { p_h}\rrbracket\big\rvert^2 \nonumber\\
&\geq  \frac12\int\limits_{\Gamma_{\textrm{I}}} \lvert\lambda\rvert\big\lvert\llbracket { p_h}\rrbracket\big\rvert^2
- 4\frac{\kappa}{\zetalow}\int\limits_{\Gamma^-_{\textrm{I}}}\big\lvert\llbracket { p_h}\rrbracket\big\rvert^2.
\end{align}
Substituting inequality~\eqref{jumptermest} into expression~\eqref{discretebilinear8}, using that $2|z|\geq \Re z + \Im z$, and adding a multiple of $\lVert J p_h\rVert^2_U$ we finally obtain
\begin{align}
2\lvert a_\lambda(p_h,\overline p_h)\rvert + 4\kappa\lVert J p_h\rVert^2_U
\geq \frac{1}{2}\int\limits_{\mathclap{\Omega_1\cup\Omega_2}}\lvert\nabla p_h\rvert^2  
+ 3\kappa^2\int\limits_{\mathclap{\Omega_1\cup\Omega_2}} \lvert p_h\rvert^2  
+  \frac{1}{\gamma}  \int\limits_{\Gamma_{\textrm{I}}}h\biggl|\left\{\frac{\partial p_h}{\partial n}\right\}\biggr|^2 
+ \frac12\int\limits_{\Gamma_{\textrm{I}}}\lvert\lambda\rvert\bigl|\llbracket { p_h}\rrbracket\bigr|^2  
\end{align}
from which the conclusion follows.
\end{proof}
 
\subsection{A priori error estimate}

So far, we have considered the following three cases for the interface impedance function $\zeta\in L^\infty(\Gamma_{\text I})$: 
\begin{enumerate}
\item[(i)]
$\zeta\equiv0$, 

\item[(ii)]
$\lvert\zeta\rvert\geq\delta_\zeta > 0$ a.e. on $\Gamma_{\text I}$,

\item[(iii)]
$\lvert\zeta\rvert \geq\delta_\zeta > 0$ a.e. on $\Gamma_{\text I}^-$  (that is, on the subset of $\Gamma_{\text I}$ where $\Im\zeta<0$).
\end{enumerate}

Case (i) is the condition of no interface, case (ii) is the condition that was imposed on the original problem formulation in \S~\ref{Prob_description}.
Our discrete problem~\eqref{discretestateeqn} is constructed to allow the more general case~(iii), for which case~(i) is a special case.

Since discrete problem~\eqref{discretestateeqn} reduces to the standard Nitsche method in case~(i), the a priori analysis is standard and will not be carried out here.
In this section, we will derive an a priori estimate, based on a  Schatz-type argument~\cite{Schatz1974}, \cite[Thm.~(5.7.6)]{brenner2008}, for the finite element approximation in case~(ii). 
The estimate, in turn, implies uniqueness, and thus existence, of solutions to problem~\eqref{discretestateeqn} for $h$ small enough.

We proved discrete stability, in the sense of Theorem~\ref{discGordinginequality}, for case~(iii), but the approach used for a priori error analysis in this section will be restricted to case~(ii).
The reason is the high regularity requirements necessary for the standard  form of the Schatz argument. 
If $\zeta$ vanishes on only a part of $\Gamma_{\text I}$, as is possible in case~(iii), we are in the case of a domain with a cut, for which not enough regularity holds for the proof used in our a priori estimate.

We start by the following estimate that holds in case~(ii).
\begin{lemma}\label{lambdabound2}
Let $\gamma>0$, and assume that  $\zeta\in L^\infty(\Gamma_{\text I})$ satisfies $\lvert\zeta\rvert\geq \delta_{\zeta}>0$ almost everywhere on $\Gamma_{\text I}$.
Then there is a $h_0>0$ such that for each $0<h\leq h_0$, function $\lambda$, as defined in expression~\eqref{lambdadfn}, satisfies
 \begin{equation}\label{e:lambdabound2}
 \lvert\lambda\rvert \leq \frac{\sqrt{2}\kappa}{\delta_{\zeta}}\qquad\text{a.e.\ on $\Gamma_{\text I}$.}
\end{equation}
\end{lemma}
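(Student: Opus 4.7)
The plan is to partition $\Gamma_{\text I}$ into $\Gamma_{\text I}^-$ and $\Gamma_{\text I}\setminus\Gamma_{\text I}^-$ and estimate $|\lambda|$ separately on each piece, reusing as much as possible from Lemma~\ref{lambdacondn} and Lemma~\ref{l:suffcond}. The hypothesis $|\zeta|\geq\zetalow$ on all of $\Gamma_{\text I}$ is stronger than the assumption of Lemma~\ref{l:suffcond}, so that lemma applies and furnishes an $h_0>0$ (explicitly $h_0=\gamma\zetalow/(4\kappa)$, or $h_0=+\infty$ when $|\Gamma_{\text I}^-|=0$) for which condition~\eqref{e:zetahcond} is satisfied whenever $0<h\leq h_0$; in particular $\lambda$ is well defined.

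On $\Gamma_{\text I}^-$ I would simply quote the relevant step in the proof of Lemma~\ref{lambdacondn}: inequality~\eqref{e:ldenomineq}, combined with $|\zeta|\geq\zetalow$, already gives the intermediate bound~\eqref{lambdaGMbound}, namely
$$|\lambda|\leq\frac{\sqrt{2}\,\kappa}{|\zeta|}\leq\frac{\sqrt{2}\,\kappa}{\zetalow} \qquad\text{a.e.\ on $\Gamma_{\text I}^-$.}$$
Nothing new is needed here beyond observing that the uniform lower bound $|\zeta|\geq\zetalow$ is now available on this entire subset.

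The only remaining item is to obtain the same estimate on $\Gamma_{\text I}\setminus\Gamma_{\text I}^-$, where $\Im\zeta\geq0$. Here no smallness assumption on $h$ is needed at all, because the two terms in the decomposition of $|h/\gamma+\zeta/(\i\kappa)|^2$ have the same sign. A direct calculation yields
$$\left|\frac{h}{\gamma}+\frac{\zeta}{\i\kappa}\right|^2 = \left(\frac{h}{\gamma}+\frac{\Im\zeta}{\kappa}\right)^2+\left(\frac{\Re\zeta}{\kappa}\right)^2 \geq \left(\frac{\Im\zeta}{\kappa}\right)^2+\left(\frac{\Re\zeta}{\kappa}\right)^2 = \frac{|\zeta|^2}{\kappa^2},$$
using only $h/\gamma\geq0$ and $\Im\zeta\geq0$. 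Inversion gives $|\lambda|\leq\kappa/|\zeta|\leq\kappa/\zetalow\leq\sqrt{2}\,\kappa/\zetalow$ on this subset, so the two estimates combine into~\eqref{e:lambdabound2}.

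There is no genuine obstacle; the proof is essentially bookkeeping. The one point worth flagging is the asymmetry between the two pieces: on $\Gamma_{\text I}\setminus\Gamma_{\text I}^-$ the bound is trivial for every $h>0$, while on $\Gamma_{\text I}^-$ the possible sign cancellation between $h/\gamma$ and $\Im\zeta/\kappa$ forces one to invoke condition~\eqref{e:zetahcond}, which is precisely what restricts $h$ to $h\leq h_0$ and gives only the slightly weaker constant $\sqrt{2}$ in place of $1$ in the final estimate.
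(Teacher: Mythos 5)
Your argument is correct and rests on the same key computation as the paper: expanding $\bigl|\tfrac{h}{\gamma}+\tfrac{\zeta}{\i\kappa}\bigr|^2$ and using condition~\eqref{e:zetahcond} together with $|\zeta|\ge\zetalow$ to get $\bigl|\tfrac{h}{\gamma}+\tfrac{\zeta}{\i\kappa}\bigr|^2\ge\zetalow^2/(2\kappa^2)$. The only difference is that your split into $\Gamma_{\text I}^-$ and its complement is unnecessary — the paper's single estimate~\eqref{e:ldenomineq2} already holds uniformly a.e.\ on $\Gamma_{\text I}$ wherever \eqref{e:zetahcond} does, so the case distinction buys nothing beyond the marginally sharper (and unused) constant on $\Gamma_{\text I}\setminus\Gamma_{\text I}^-$.
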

\begin{proof}
Since  $\lvert\zeta\rvert\geq \delta_{\zeta}>0$ almost everywhere on $\Gamma_{\text I}$, Lemma~\ref{l:suffcond} implies that there is an $h_0>0$ such that condition~\eqref{e:zetahcond} is satisfied for each $0<h\leq h_0$.
Expanding the denominator of $\lvert\lambda\rvert^2$, using condition~\eqref{e:zetahcond}, and the lower bound on $\lvert\zeta\rvert$ yields 
\begin{equation}\label{e:ldenomineq2}
\left|\frac{h}\gamma + \frac{\zeta}{\i\kappa}\right|^2
= \frac{h^2}{\gamma^2} + \frac{\lvert\zeta\rvert^2}{\kappa^2} + 2\frac{h\Im\zeta}{\gamma\kappa}
\geq \frac{h^2}{\gamma^2} + \frac{\lvert\zeta\rvert^2}{2\kappa^2} \geq\frac{\delta_\zeta^2}{2\kappa^2},
\end{equation}
from which the conclusion follows.
\end{proof}

Let $\pi^h_i$ be the standard nodal interpolation operator~\cite[Def. 3.3.9]{brenner2008} on the mesh $\mathcal{T}_i^h$ of $\Omega_i$.
We define the interpolation operator $\pi_h:W\to V_h$, where  $W\big|_{\Omega_i}^{\strut} = H^m(\Omega_i)\cap C^0(\overline\Omega_i)$, by  
 \begin{equation}\label{interpoloperator}
 \bigl(\pi_h q\bigr)\big|_{\Omega_i}^{\strut}= \pi^h_i\bigl(q\big|_{\Omega_i}^{\strut}\bigr) \qquad i= 1,2,
\end{equation}
for which the following standard interpolation estimate holds:

\begin{equation}\label{interpolestimate}
|||\, p-\pi_hp\, |||\leq  C_{\kappa\gamma}   h^{s}\|p\|_{H^{s+1}(\Omega_1\cup\Omega_2)},
\end{equation}
where $C_{\kappa\gamma}$ is a constant that depends on $\kappa$ and $\gamma$, $s\in[0,k]$, and where $k$ is the maximal polynomial degree of the elements in $V_h$.
This estimate relies on trace inequality~\eqref{trace2} together with scalings to and from a reference element, analogously as for discontinuous Galerkin methods~\cite[\S~2]{Arnold1982}.
In three space dimensions, we need to use that the family of surface meshes on $\Gamma_{\text I}$ is non degenerate if the volume mesh family is non degenerate, as discussed in remark~\ref{r_nondegen}.

In addition to a regularity condition on the solution to variational problem~\eqref{ContVarationalproblem}, our proof of Theorem~\eqref{errorestimateThm} requires the following  regularity assumption for a particular dual problem associated with the discrete norm~\eqref{triplenorm}.
\begin{assumption}\label{dualreg}
For any pair of functions $f_1\in L^2(\Omega_1\cup\Omega_2)$, $f_2\in H^{1/2}(\Gamma_\text{I}^-)$, the solution to the variational problem of finding $w\in H^1(\Omega_1\cup\Omega_2)$ such that 
\begin{equation}
a(q, w) = \kappa\int\limits_{\mathclap{\Omega_1\cup\Omega_2}} f_1q 
+ \frac{1}{\delta_\zeta}\int\limits_{\mathclap{\Gamma_\text{I}^-}}f_2\jump q
\qquad\forall q\in H^1(\Omega_1\cup\Omega_2)
\end{equation}
satisfies the condition
\begin{equation}
\|w\|_{H^2(\Omega_1\cup\Omega_2)} \leq C_r\big(\|f_1\|_{L^{2}(\Omega_1\cup\Omega_2)}+\|f_2\|_{H^{1/2}(\Gamma_I)}\big),
\end{equation}
where $C_r$ depends on $\kappa$ and $\delta_\zeta$ but not on $f_1$ and $f_2$.
\end{assumption}

\begin{theorem}\label{errorestimateThm}
Let $p$ be the solution of problem~\eqref{ContVarationalproblem}, in which the impedance function $\zeta\in L^\infty(\Gamma)$ is assumed to satisfy $\lvert\zeta\rvert \geq\delta_\zeta >0$ a.e.\ on $\Gamma_\text{I}$, and let $p_h$ be a solution to corresponding discrete problem~\eqref{discretestateeqn},  
Provided  that $p$ satisfies the regularity condition $p\in H^{1+\sigma}(\Omega_1\cup\Omega_2)$ for some $\sigma\geq 1$ and that Assumption~\ref{dualreg} holds, there exists a mesh size limit $h_1>0 $ and a constant $C$ such that for each $0<h\leq h_1$,
\begin{equation}\label{errorestimate}
\begin{aligned}
|||\, p-p_h\, ||| \leq Ch^{s}\|p\|_{H^{s+1}(\Omega_1\cup\Omega_2)}.
\end{aligned}
\end{equation} 
where $s=\min\{\sigma, k\}$ and where $k$ is the maximal polynomial order of the elements in $V_h$.

\end{theorem}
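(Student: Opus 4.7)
The plan is to use a standard Schatz-type duality argument. Split the error as $p - p_h = \eta + e_h$, where $\eta = p-\pi_h p$ and $e_h = \pi_h p - p_h \in V_h$. By the triangle inequality, $|||p-p_h||| \leq |||\eta||| + |||e_h|||$. The interpolation estimate~\eqref{interpolestimate} handles $|||\eta|||$ directly, giving the desired $h^s\|p\|_{H^{s+1}(\Omega_1\cup\Omega_2)}$ bound. The task therefore reduces to bounding $|||e_h|||$ by $|||\eta|||$, up to a constant independent of $h$.

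For $|||e_h|||$, I would apply the discrete Gårding inequality (Theorem~\ref{discGordinginequality}) to get
\begin{equation*}
\tfrac{1}{4}|||e_h|||^2 \leq |a_\lambda(e_h,\overline{e}_h)| + 2\kappa\|J e_h\|_U^2.
\end{equation*}
The first term is handled by Galerkin orthogonality~\eqref{orthogocond} (noting $\overline{e}_h\in V_h$), which gives $a_\lambda(e_h,\overline{e}_h) = -a_\lambda(\eta,\overline{e}_h)$, combined with the continuity of $a_\lambda$ (Theorem~\ref{discContinequality}) to yield $|a_\lambda(\eta,\overline{e}_h)| \leq C_c|||\eta|||\,|||e_h|||$. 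The second term is the lower-order part typical of Helmholtz analysis; I would write $\|Je_h\|_U^2 \leq 2\|J\eta\|_U^2 + 2\|Je\|_U^2$ with $e = p-p_h$, where $\|J\eta\|_U^2 \leq C|||\eta|||^2$ follows from trace inequality~\eqref{trace2}, leaving $\|Je\|_U^2$ as the main obstacle.

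To estimate $\|Je\|_U^2$, I would use Assumption~\ref{dualreg} with data $f_1 = \overline{e}$ and $f_2 = \overline{\jump{e}|_{\Gamma_{\text I}^-}}$, producing a dual solution $w\in H^2(\Omega_1\cup\Omega_2)$ such that $a(e,w) = \|Je\|_U^2$ and $\|w\|_{H^2(\Omega_1\cup\Omega_2)} \leq C_r(\|e\|_{L^2(\Omega_1\cup\Omega_2)} + \|\jump{e}\|_{H^{1/2}(\Gamma_{\text I}^-)})$. The crucial observation, which I would verify by direct computation using that $w\in H^2$ satisfies the continuous transmission condition $\jump{w} + \zeta(\mathrm{i}\kappa)^{-1}\{\partial w/\partial n\} = 0$, is that $a_\lambda(q,w) = a(q,w)$ for every $q$; this allows Galerkin orthogonality to be invoked. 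Thus
\begin{equation*}
\|Je\|_U^2 = a_\lambda(e,w) = a_\lambda(e, w-\pi_h w) \leq C_c |||e|||\,|||w-\pi_h w|||,
\end{equation*}
and interpolation~\eqref{interpolestimate} together with the $H^2$ regularity provides $|||w-\pi_h w||| \leq Ch\|w\|_{H^2(\Omega_1\cup\Omega_2)}$.

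The main obstacle is controlling $\|\jump{e}\|_{H^{1/2}(\Gamma_{\text I}^-)}$, which is not a lower-order quantity relative to $|||\cdot|||$. I would handle it by applying the trace inequality~\eqref{trace1} to each side of $\Gamma_{\text I}^-$, bounding it by a constant multiple of $\|e\|_{H^1(\Omega_1\cup\Omega_2)}$, and hence by $|||e|||$ up to constants depending on $\kappa$. Combined with the duality bound, this yields $\|Je\|_U^2 \leq Ch\,|||e|||^2$. Inserting everything into the Gårding estimate and using $|||e|||^2 \leq 2|||\eta|||^2 + 2|||e_h|||^2$ produces
\begin{equation*}
|||e_h|||^2 \leq C_c|||\eta|||\,|||e_h||| + C|||\eta|||^2 + Ch\bigl(|||\eta|||^2 + |||e_h|||^2\bigr).
\end{equation*}
Choosing $h \leq h_1$ small enough to absorb the $Ch|||e_h|||^2$ term into the left-hand side, and applying Young's inequality to the mixed term, yields $|||e_h||| \leq C|||\eta|||$, which combined with the first paragraph completes the proof with $s=\min\{\sigma,k\}$.
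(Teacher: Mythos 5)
Your argument is, in essence, the paper's own Schatz-type proof: the same splitting $e=\eta+e_h$, the same use of the discrete G\aa rding inequality, Galerkin orthogonality, continuity of $a_\lambda$, the interpolation estimate, and Assumption~\ref{dualreg}. The only structural difference is the choice of dual data: the paper takes $f_1=\bar e_h$, $f_2=\jump{\bar e_h}$ so that $\|Je_h\|_U^2=a_\lambda(e_h,w)$ directly, which it then expands as $a_\lambda(e_h-e,w)+a_\lambda(e-e_h,w-\pi_hw)+a_\lambda(e_h,w-\pi_hw)$; you take $f_1=\bar e$, $f_2=\jump{\bar e}$, gain a clean factor of $h$ on $\|Je\|_U^2$, and pay for it with the extra splitting $\|Je_h\|_U^2\le2\|J\eta\|_U^2+2\|Je\|_U^2$ and a trace-inequality bound on $\|J\eta\|_U$. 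Both variants end by absorbing an $O(h)\,|||e_h|||^2$ term for $h$ small, so the difference is bookkeeping rather than substance.

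One step you label as crucial needs more care. You claim that the dual solution satisfies the homogeneous transmission condition $\jump w+\zeta(\i\kappa)^{-1}\{\partial w/\partial n\}=0$ and hence that $a_\lambda(q,w)=a(q,w)$ for all $q$. This holds off $\Gamma_\text I^-$, but on $\Gamma_\text I^-$ the source term $\delta_\zeta^{-1}\int_{\Gamma_\text I^-}f_2\jump q$ in the dual problem makes the transmission condition inhomogeneous, $\{\partial w/\partial n\}+(\i\kappa/\zeta)\jump w=\delta_\zeta^{-1}f_2$, so the residual $r=\jump w+\zeta(\i\kappa)^{-1}\{\partial w/\partial n\}$ equals $\zeta(\i\kappa\delta_\zeta)^{-1}f_2\neq0$ there. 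A direct computation using identity~\eqref{lambdaidentity1} gives
\begin{equation*}
a_\lambda(q,w)-a(q,w)=-\frac{h}{\gamma}\int\limits_{\Gamma_\text I^-}\lambda\, r\Bigl(\frac{\i\kappa}{\zeta}\jump q+\Bigl\{\frac{\partial q}{\partial n}\Bigr\}\Bigr),
\end{equation*}
which, since $\lvert\lambda\zeta/(\i\kappa)\rvert\le2$ and $h^{1/2}\|\{\partial q/\partial n\}\|_{L^2(\Gamma_\text I)}\lesssim|||q|||$, is of size $O(h^{1/2})\,|||e|||\,|||q|||$ and therefore still absorbable — the conclusion survives, but the exact identity you invoke must be replaced by this estimate whenever $\lvert\Gamma_\text I^-\rvert>0$. (The paper's own appeal to Lemma~\ref{consistentlemma} for the dual problem glosses over the same point, since that lemma covers only the primal problem, which carries no interface source.)
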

\begin{proof}
Define
\begin{align}
   e &= p - p_h \label{edef}, \\
   e_h &= \pi_h e =  \pi_hp-p_h \label{ehdef}.
\end{align}

First we estimate $|||\, e_h\, |||$ using the continuity of $a_\lambda$ and the Gårding inequality.
Let $w\in H^1(\Omega_1\cup\Omega_2)$ be the solution of the dual problem 
\begin{equation}\label{dualproblem} 
 a(q,w) =  \kappa  \int\limits_{\mathclap{\Omega_1\cup\Omega_2}} {q}\overline{e}_h+ \frac{1}{\delta_{\zeta}}  \int\limits_{\Gamma^-_{\text{I}}}\jump q\llbracket \overline{e}_h \rrbracket\qquad \forall q\in H^1(\Omega_1\cup\Omega_2), 
\end{equation}
where $a$ is the bilinear form defined in expression~\eqref{contbilinear_a}. 
Since $a_\lambda$ is symmetric and consistent (by Lemma~\ref{consistentlemma}), $w$ satisfies 
\begin{equation}\label{discrdualproblem}
 a_\lambda(q_h,w) =  \kappa  \int\limits_{\mathclap{\Omega_1\cup\Omega_2}} {q_h}\overline{e}_h+  \frac{1}{\delta_{\zeta}} \int\limits_{\Gamma^-_{\text{I}}} \llbracket q_h\rrbracket\llbracket \overline{e}_h \rrbracket, \;\; \forall q_h\in V_h.
\end{equation}

Due to Assumption~\eqref{dualreg}, there is a constant $C_r$ such that 
\begin{equation}\label{H2regularity}
\|w\|_{H^2(\Omega_1\cup\Omega_2)} \leq C_r\big(\|e_h\|_{L^{2}(\Omega_1\cup\Omega_2)}+\|e_h\|_{H^{1/2}(\Gamma_I)}\big).
\end{equation}
Moreover, by trace inequality~\eqref{trace1} for $m=0$, we can bound the $H^2$ norm of $w$ by 
\begin{equation}
  \label{eq:boundH2tripple}
  \|w\|_{H^2(\Omega_1\cup\Omega_2)} \leq C_r \big(\|e_h\|_{L^{2}(\Omega_1\cup\Omega_2)}+c_1 \|e_h\|_{H^{1}(\Omega_1\cup\Omega_2)}\big)
  \leq C_t ||| e_h |||,
\end{equation}
for $C_t =C_r(1+c_1)$.

Trace inequality~\eqref{trace1}, for $m=0$ as well as $m=1$, and Lemma~\ref{lambdabound2} yield 
that there is an $h_0>0$ such that for each $0<h<h_0$,  the  estimate 
\begin{equation}\label{H2bound}
|||w||| \leq C_b\|w\|_{H^2(\Omega_1\cup\Omega_2)}
\end{equation}
holds, where $C_b$ depends on $\kappa$ , $\gamma$, $h_0$ and $\delta_{\zeta}$.

By choosing $q_h=e_h$ in equation~\eqref{discrdualproblem}, recalling definition~\eqref{Unorm}, and using orthogonality condition~\eqref{orthogocond}, we find that 
\begin{align}
 \|J e_h\|^2_{U} &= \kappa  \int\limits_{\mathclap{\Omega_1\cup\Omega_2}} |e_h|^2 +\frac{1}{\delta_{\zeta}} \int\limits_{\Gamma_{\text{I}}}  \big|\llbracket { e_h}\rrbracket\big|^2  \nonumber \\ 
  &=  
 a_{\lambda}(e_h,w) =
 a_{\lambda}(e_h-e,w) + a_{\lambda}(e,w) \nonumber \\ 
  &= 
  a_{\lambda}(e_h-e,w) + a_{\lambda}(e,w-\pi_h w)  \nonumber \\ 
  &=  
 a_{\lambda}(e_h-e,w) + a_{\lambda}(e-e_h,w-\pi_h w) + a_{\lambda}(e_h,w-\pi_h w).
  \label{eq:tnormehbound}
\end{align}

The continuity of $a_\lambda$ (Theorem~\ref{discContinequality}) implies that
\begin{equation}
  \|Je_h\|^2_{U} \le
  C_c||| e_h-e||| \big( |||w||| + |||w-\pi_h w||| \big) + 
  C_c|||e_h||| \, |||w-\pi_h w|||.
  \label{eq:firstehubound}
\end{equation}
Inequality~\eqref{H2bound} and interpolation estimate~\eqref{interpolestimate} with $s=1$ yields that
\begin{equation}
  \|J e_h\|^2_{U} \le
  \big( ||| e_h-e||| (C_b  +C_{\kappa\gamma} h ) + |||e_h||| \, C_{\kappa\gamma} h \big)  C_c \|w\|_{H^2({\Omega_1\cup\Omega_2})}.
  \label{eq:firstehubound}
\end{equation}
Then, by inequality~\eqref{eq:boundH2tripple}, we obtain the bound 
\begin{equation}
  \|J e_h\|^2_{U} \le
  \big( ||| e_h-e||| (C_b  +C_{\kappa\gamma} h ) + |||e_h||| \, C_{\kappa\gamma} h \big)  C_c C_t ||| e_h |||.
  \label{eq:secondehubound}
\end{equation}

From the discrete Gårding inequality (Theorem~\ref{discGordinginequality}),  Galerkin orthogonality~\eqref{orthogocond}, and the continuity of $a_\lambda$ (Theorem~\ref{discContinequality}), we find that
\begin{align}
  \frac14 |||e_h|||^2 &\le  
  \lvert a_{\lambda}(e_h,e_h)\rvert + 2\kappa \|J e_h\|^2_{U} \nonumber \\ 
  &= 
  |a_{\lambda}(e_h-e,e_h)| + 2\kappa \| Je_h\|^2_{U} \le
  C_c||| e_h-e||| \, ||| e_h||| + 2\kappa \| J e_h\|^2_{U}.
  \label{eq:tnormehbound}
\end{align}
By combining the bounds~\eqref{eq:secondehubound} and~\eqref{eq:tnormehbound} and rearranging the terms, we find that 
\begin{equation}
  \left( \frac14 - 2\kappa C_{\kappa\gamma}  C_c C_t h\right) |||e_h|||^2 \le  
  C_c\left(1 + 2\kappa (C_b + C_{\kappa\gamma} h) C_t  \right) ||| e_h-e||| \, ||| e_h|||.
\end{equation}
Then, provided that $h\le  \min(h_0, h_1)$, where $h_1 = 1/(16 \kappa C_{\kappa\gamma} C_c C_t)$, we have
\begin{equation}
  |||e_h||| \le C_{eh} ||| e_h-e|||,
  \label{eq:tnormehbound2}
\end{equation}
where 
$C_{eh}=  1 + 8C_c (1+2\kappa C_tC_b)$.
By the triangle inequality and inequality~\eqref{eq:tnormehbound2}, we get
\begin{equation}\label{eq:tnormtriangineq}
   |||e||| \le |||e-e_h||| + |||e_h||| \le ( 1+ C_{eh} )||| e_h-e|||. 
\end{equation}
Finally, by inequality~\eqref{eq:tnormtriangineq}, definitions~\eqref{edef}  and~\eqref{ehdef}, and interpolation estimate~\eqref{interpolestimate}, we obtain
\begin{equation}
   |||p-p_h|||  \le ( 1+ C_{eh} )|||p-\pi_hp|||  \le C h^{s}\|p\|_{H^{s+1}(\Omega_1\cup\Omega_2)}, 
\end{equation}
where $C=C_{\kappa\gamma} ( 1+ C_{eh} ).$
\end{proof}

Note that Theorem~\ref{errorestimateThm} assumes existence of at least one $p_h$ that solves the discrete problem~\eqref{discretestateeqn}.
However, it is only in the case of $\ell = 0$ that existence can be assumed a priori;
in this case, we know that at least the trivial solution $p_h=0$ satisfies equation~\eqref{discretestateeqn}.
Since corresponding solution $p$ to problem~\eqref{ContVarationalproblem}  vanishes due to uniqueness  (Lemma~\ref{contuniquenesslemma}), Theorem~\ref{errorestimateThm} implies that the trivial solution is the only solution also to the homogeneous discrete problem~\eqref{discretestateeqn} for each $0<h\leq h_1$, which in turn implies uniqueness of solutions to the inhomogeneous discrete problems.
Since uniqueness implies existence for finite-dimensional linear systems, it thus exists a unique solution to the discrete problem~\eqref{discretestateeqn} for each $\ell$ under the assumptions of Theorem~\ref{errorestimateThm}.
  
\section{Numerical Experiments}\label{s:NumResults}

\subsection{Convergence test}\label{s:Convergencetest}

As a first experiment, we study the convergence properties of the proposed method for a test case involving planar wave propagation in a  two-dimensional strip and compare the results with the ones obtained using  a standard finite element implementation based on variational form~\eqref{ContVarationalproblem}.
We consider the boundary value problem~\eqref{stateequation} in the domain $\Omega_1\cup\Omega_2=\{x\in\mathbb{R}^2: 0<|x_1|<1, 0<x_2<0.1\}$ with an interface boundary $\Gamma_{\text{I}} =\{(0,x_2)\in\mathbb{R}^2: 0<x_2<0.1\}$.
In other words, $\Omega_1\cup\Omega_2$ is a waveguide of length 2~m and width 0.1~m, and the interface is placed vertically at the middle of the waveguide.
Thus, domain $\Omega_1\cup\Omega_2$ is topologically equivalent to the setup in Figure~\ref{fig:DomCases}~(a).

Since $\Omega_1\cup\Omega_2$ is a narrow wave guide, the exact solution is given by
\begin{equation}
p(x_1,x_2)= \begin{cases}
  e^{-\text{i}\kappa(x_1+1)}+\frac{\zeta}{2+\zeta}e^{\text{i}\kappa(x_1-1)}
& -1<x_1<0,\, 0<x_2<0.1
\\[5pt] 
\frac{2}{2+\zeta} e^{-\text{i}\kappa(x_1+1)} 
& \phantom{-}0<x_1<1,\, 0<x_2<0.1.
\end{cases} 
\end{equation}
 \begin{figure}
 	\centering
 	\subfloat{\includegraphics[width=50mm]{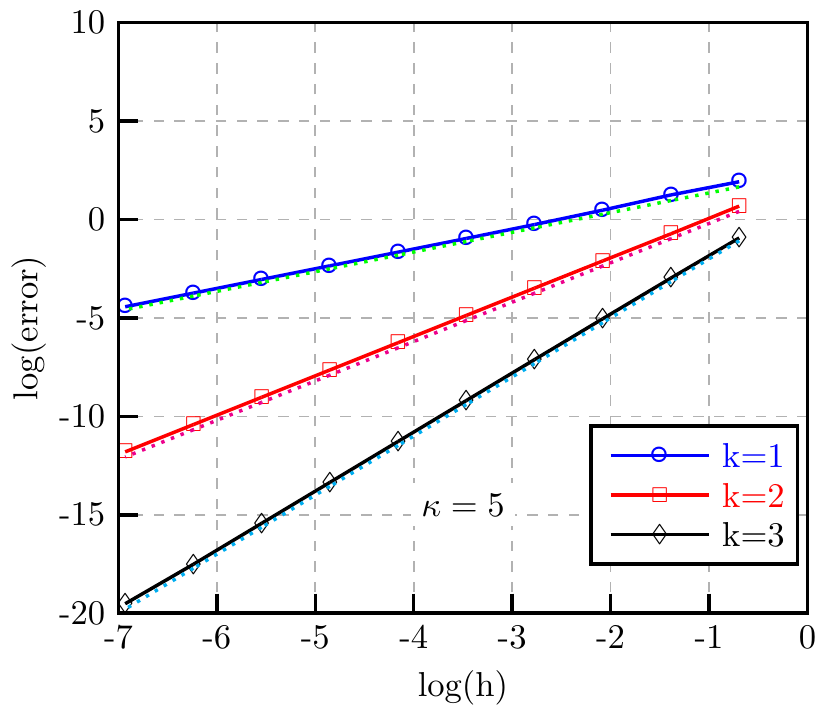}\label{fig:kappa5}}
 	\subfloat {\includegraphics[width=50mm]{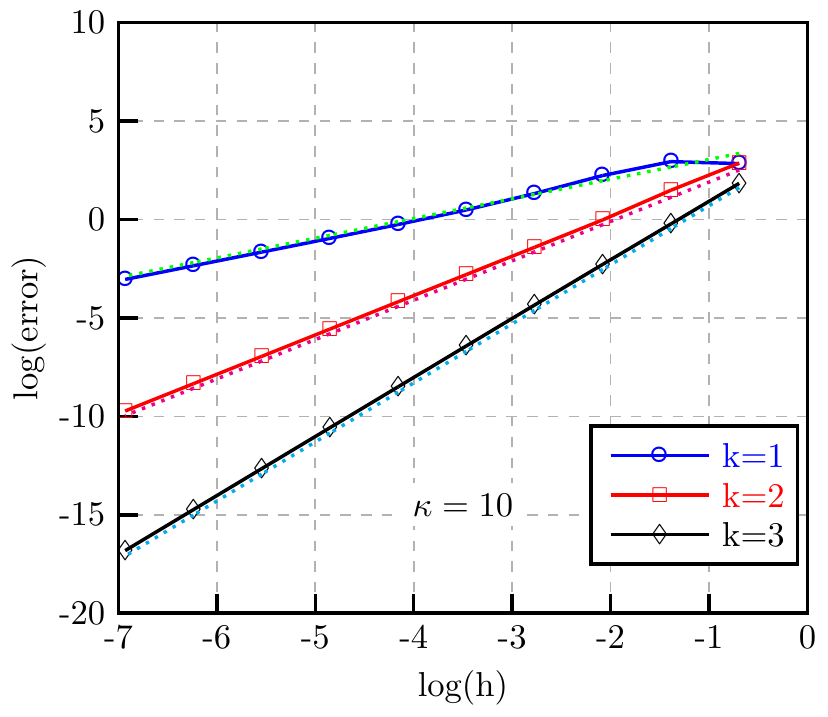}\label{fig:kappa10}}
 	\\[-10pt]
 	\subfloat{\includegraphics[width=50mm]{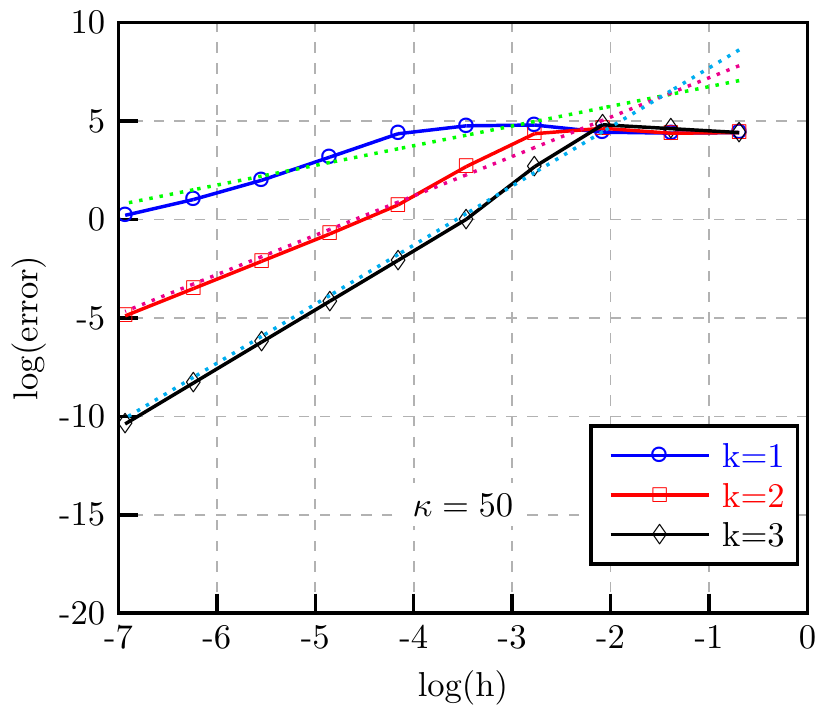}\label{fig:kappa50}}
 	\subfloat {\includegraphics[width=50mm]{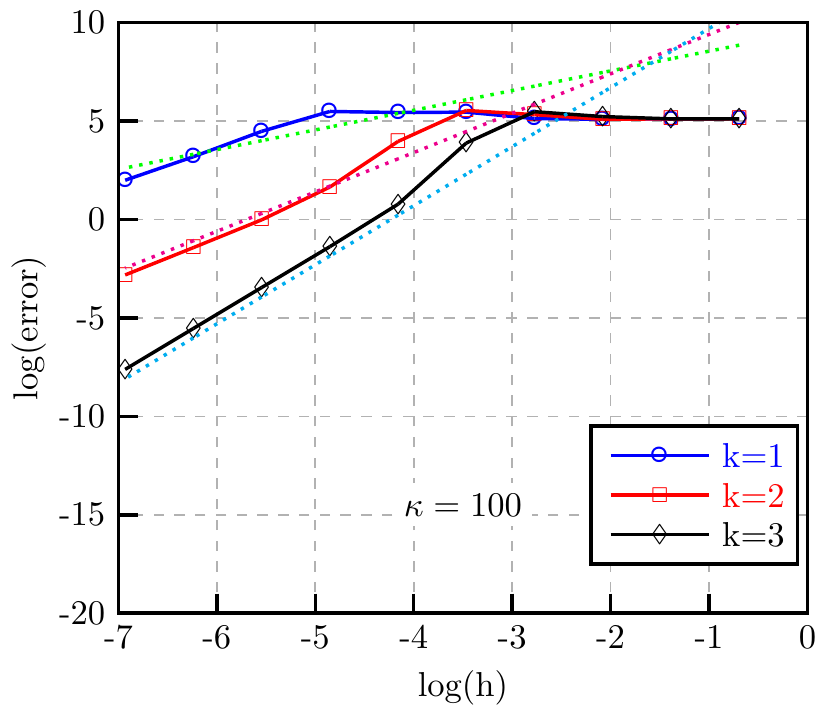}\label{fig:kappa100}}
 	\\[-10pt]
 	\subfloat{\includegraphics[width=50mm]{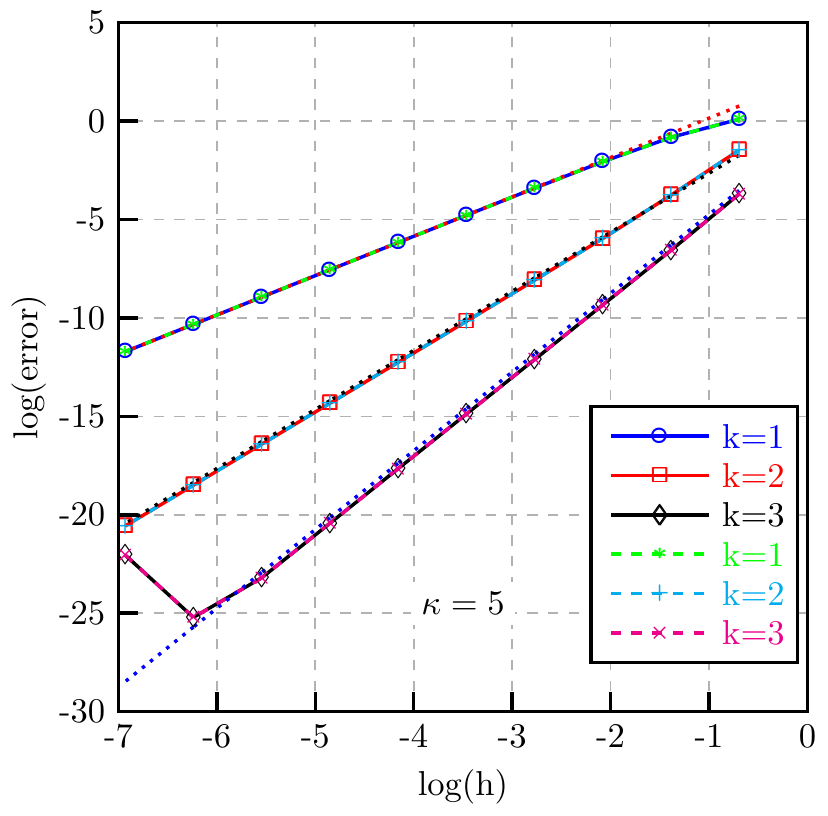}\label{fig:kappa5}}
 	\subfloat {\includegraphics[width=50mm]{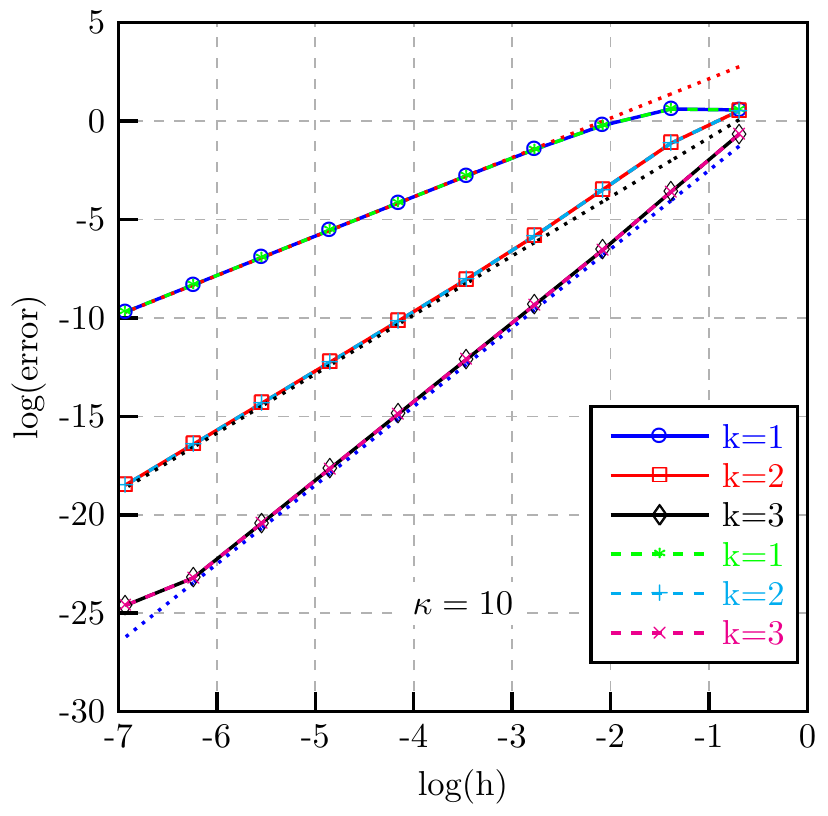}\label{fig:kappa10}}
 	\\[-10pt]
 	\subfloat{\includegraphics[width=50mm]{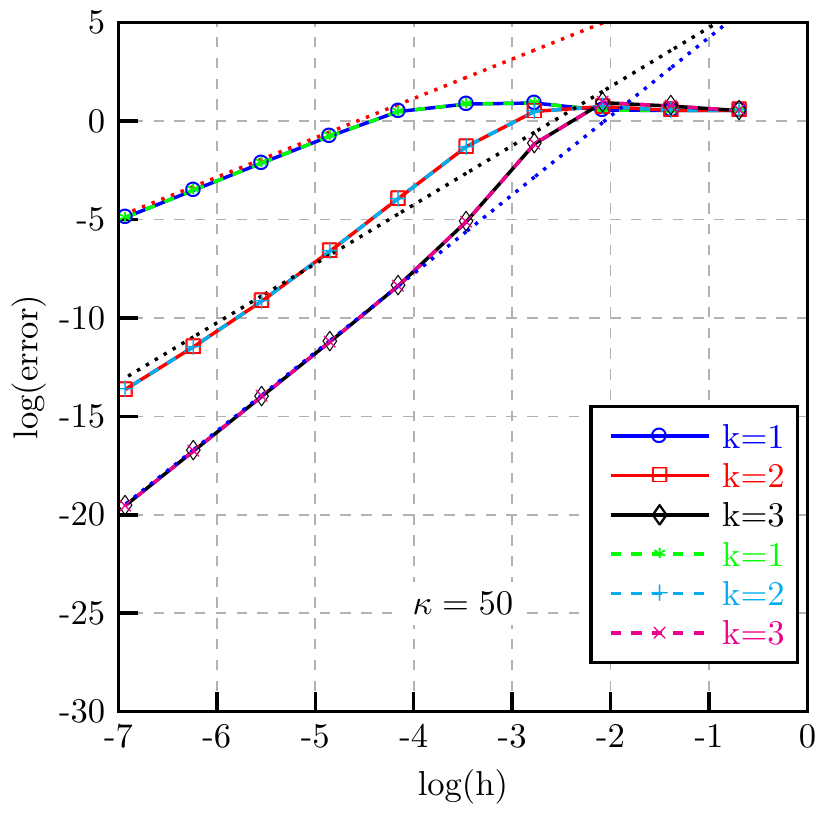}\label{fig:kappa50}}
 	\subfloat {\includegraphics[width=50mm]{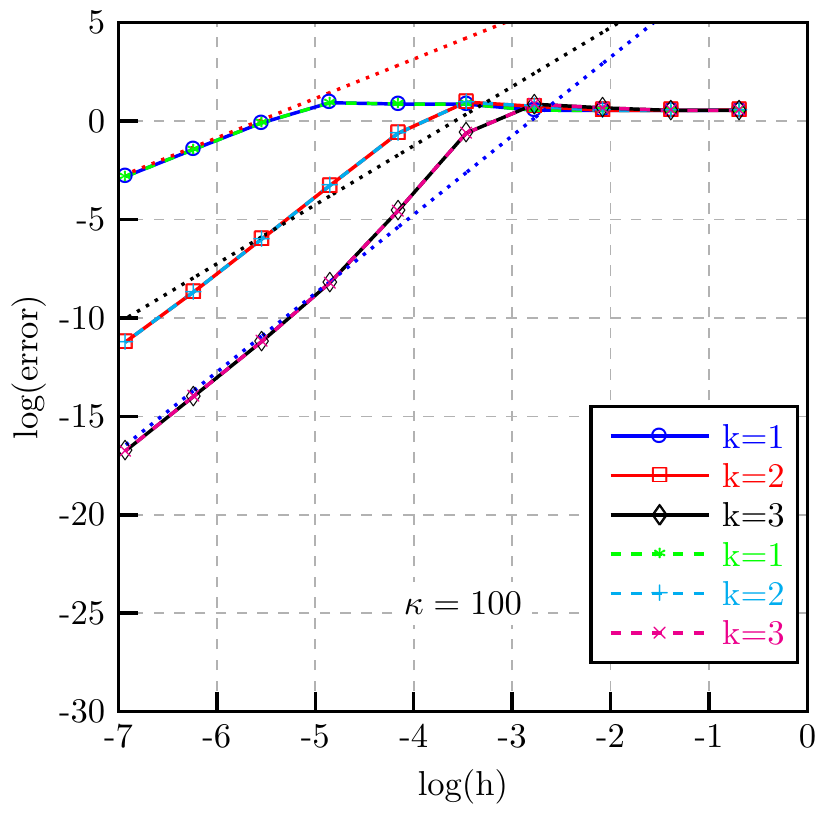}\label{fig:kappa100}}
 	\caption{Convergence of the proposed method for the interface problem with acoustic impedance $\zeta = 0.21+0.10\mathrm{i}$ for polynomial order $k\in\{1, 2, 3\}$ and wave numbers 
 		$\kappa = 5$, $\kappa = 10$, $\kappa = 50$, and $\kappa = 100$.  
 		The top four pictures show the error measured in the mesh-dependent norm~\eqref{triplenorm}, whereas the bottom four show the erros in the $L^2(\Omega)$ norm.
 		The dotted lines are references for convergence of order 1--4.}\label{fig:convrates1}
 \end{figure}

 \begin{figure}
 	\centering
 	\setlength{\unitlength}{0.5\textwidth}
 	\subfloat{\includegraphics[width=45mm]{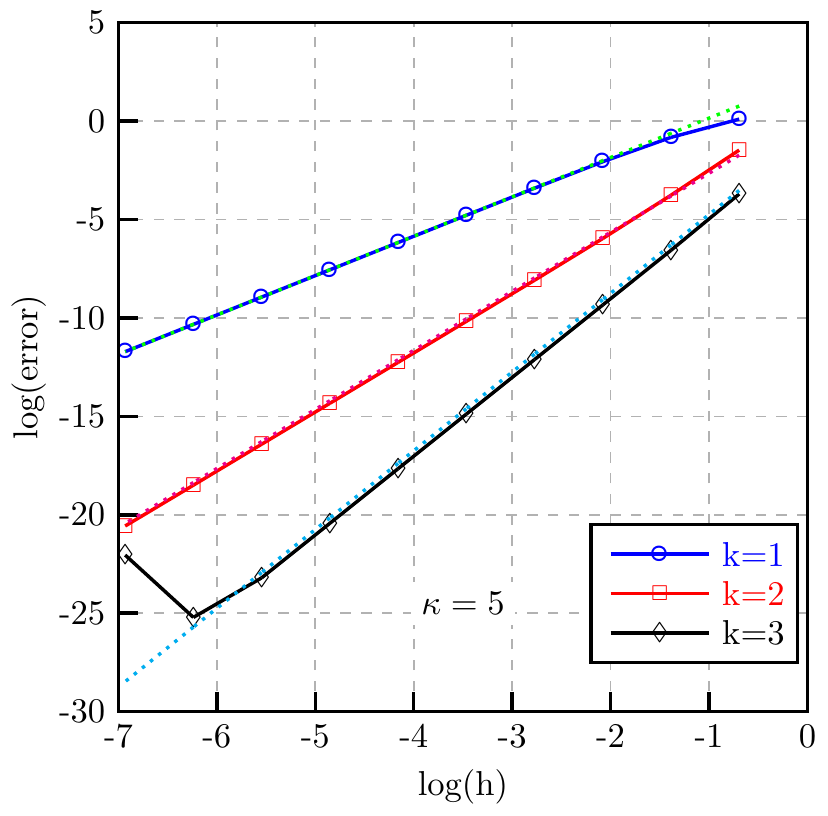}\label{fig:kappa5}}
 	\subfloat {\includegraphics[width=45mm]{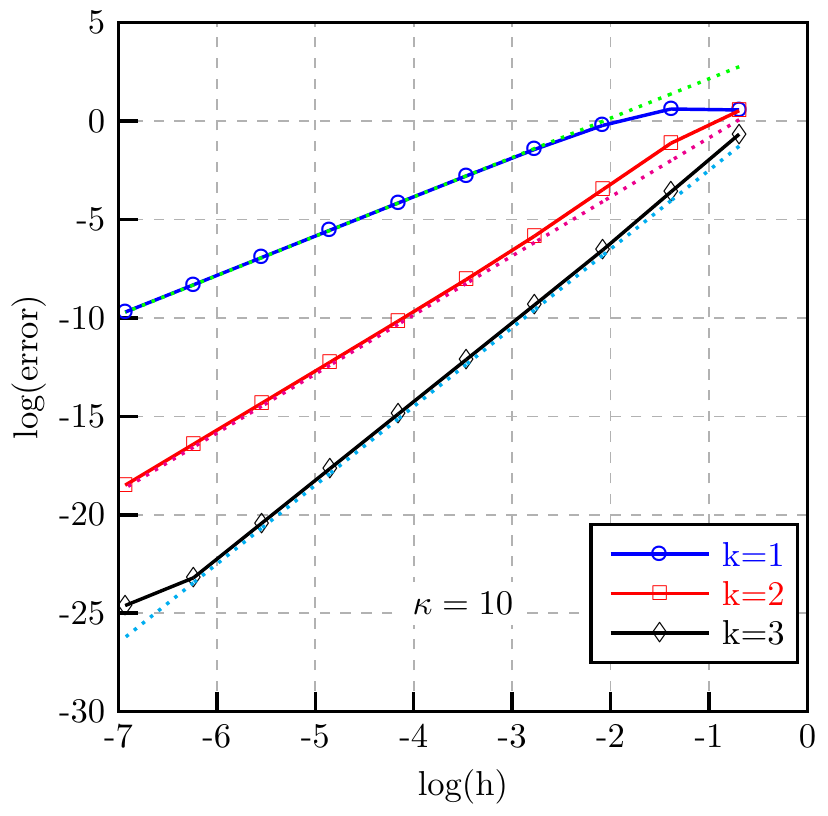}\label{fig:kappa10}}
 	\\[-10pt]
 	\subfloat{\includegraphics[width=45mm]{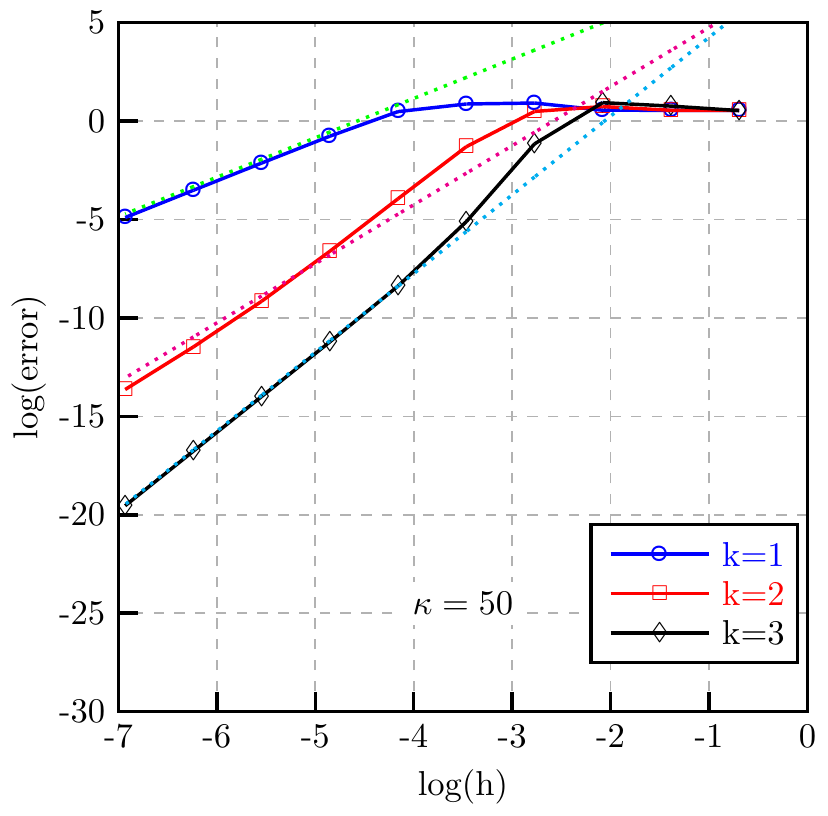}\label{fig:kappa50}}
 	\subfloat {\includegraphics[width=45mm]{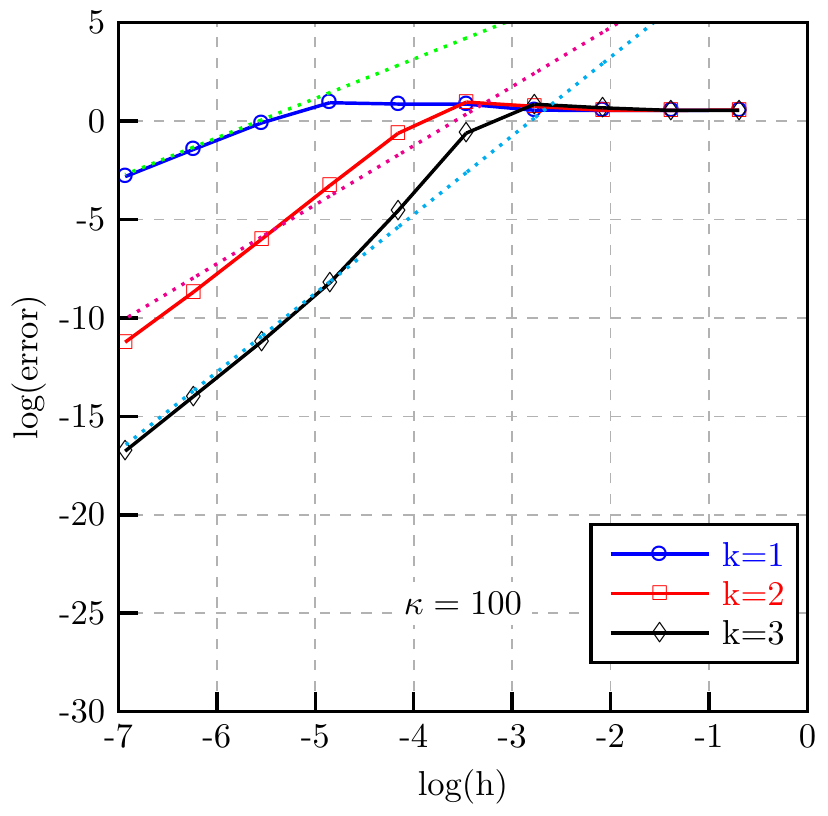}\label{fig:kappa100}}
 	\caption{Convergence rates of the proposed method for the interface problem with acoustic impedance $\zeta = 0$ for $k\in\{1, 2, 3\}$ and
 		$\kappa = 5$, $\kappa = 10$, $\kappa = 50$, and $\kappa = 100$.
 		The dotted lines are reference lines for second, third, and fourth order $L^2$-convergence.}
 	\label{fig:convratezeta2}
 \end{figure}

The computational domain is discretized by square elements.
The proposed as well as the standard finite element methods are implemented in Matlab for piecewise bilinear ($k =1$), biquadratic ($k =2$), and bicubic ($k =3$) finite element spaces.
To study the effect of wave number $\kappa$ on the convergence rate, we consider the four different wave numbers $\kappa=5$, $10$, $50$, and $100$.

Figure~\ref{fig:convrates1} shows the convergence behavior for $\zeta=0.21+0.10\i$. 
The top four pictures show the errors measured in the mesh-dependent norm~\eqref{triplenorm}; lines with circle, square, and diamond  indicate the behavior for polynomial order $k =1$, $k =2$, and $k =3$, respectively.
We conclude that  asymptotic convergence rates agree with the optimal rates established in Theorem~\ref{errorestimateThm}.
The bottom four pictures of figure~\ref{fig:convrates1} show corresponding convergence behavior  in the $L^2(\Omega)$ norm. 
Also displayed in the four bottom pictures (dashed lines marked with asterisk, plus, and x marks) is the convergence behavior for the standard finite element  method based on a discretization of variational form~\eqref{ContVarationalproblem}.
Note that the curves for the standard and new methods are on top of each other.
We conclude that the convergence rates are optimal also in the $L^2(\Omega)$ norm for this test case and that the proposed method behaves as the standard finite element method for $\zeta$ not close to zero.

Figure~\ref{fig:convratezeta2} shows the performance of the proposed method for $\zeta=0$.
In this case,  the exact solution is continuous across the interface, and the  finite element method based on variational form~\eqref{ContVarationalproblem} is not applicable. 
The lines with circle, square, and diamond marks show second, third, and fourth order $L^2(\Omega)$-convergence for $k =1$, $k =2$, and $k =3$, respectively, which verifies optimal convergence of the proposed method also in the limit case $\zeta=0$.

Figures~\ref{fig:convrates1} and \ref{fig:convratezeta2} suggest asymptotically optimal convergence rate of the proposed method independent of acoustic impedances $\zeta$.
However, as expected, the error increases significantly with increasing wave number due to the so-called pollution error of standard continuous Galerkin methods~\cite{IhBa95}. We also note that higher-order methods are particularly effective to reduce the error for higher wave numbers.


\subsection{Examples in 2D}\label{examples}

\subsubsection{Without surface waves}\label{Pressure_jump}
In this example $\Omega_1\cup\Omega_2\in\mathbb{R}^2$ is an arbitrary cross-section of a simple cylindrical reactive muffler; that is,  
$\Omega_1\cup\Omega_2\in\mathbb{R}^2$ is composed of two rectangular domains $\Omega_1$ and $\Omega_2$, as depicted in Figure~\ref{fig:DomCases} (b), and the equations are solved in cylindrical coordinates, assuming rotational symmetry around an axis placed at the lowest boundary.
Domain $\Omega_1$ has length 0.9~m and width 0.05~m, and $\Omega_2$ has length 0.5~m and width 0.05~m.
We solve boundary value problem~\eqref{stateequation} using our proposed method for two interface conditions on $\Gamma_{\text{I}}$, $\zeta= 1+1\mathrm{i}$ and $\zeta= 0$.
On the left boundary $\Gamma_{\text{io}}$, we set $g=1$ with $\kappa= 23.8$.
This condition imposes an incoming plane wave of unit amplitude and absorbs outgoing plane waves.
On the right $\Gamma_{\text{io}}$, we set $g=0$; that is, no wave is entering and the outgoing plane wave is absorbed.
This example is implemented in Comsol Multiphysics using the software's ``weak form'' facility, where all integrals associated with the variational form~\eqref{discretestateeqn} can be specified symbolically.
The finite element discretization uses a uniform mesh with square quadratic elements of side length $h = 3.96\times10^{-4}~\text{m}$.

Figure~\ref{fig:Pressurejump} shows pressure field distributions. 
Plots $(a)$ and $(b)$ on the left display the  real part of the pressure for interface impedance $\zeta= 1+1\mathrm{i}$.
The solution exhibits a pressure jump across the lossy interface boundary $\Gamma_{\text{I}}$.
The plots on the right shows the real part of the pressure for  $\zeta= 0$.
Figures~\ref{fig:Pressurejump} $(c)$ and $(d)$ show the continuity of the solution across $\Gamma_{\text{I}}$ as expected according to expression~\eqref{e:Zdef}.
The proposed method is thus capable to handle interface conditions with $\zeta= 0$ and $\zeta\neq 0$ without problem.
\begin{figure}
\centering
 \setlength{\unitlength}{1\textwidth}
\subfloat{\includegraphics[width=74mm]{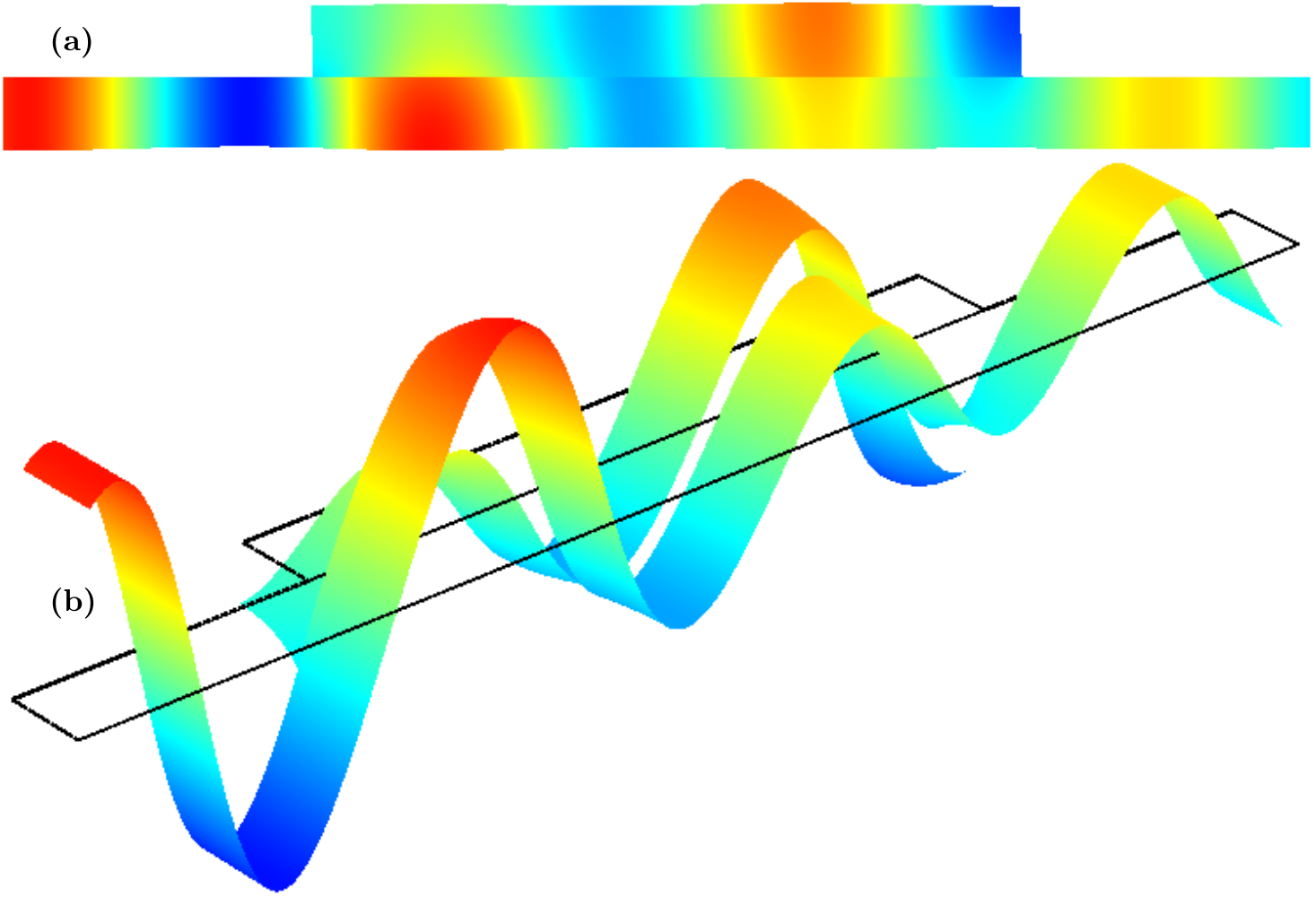}\label{fig:Muffler2D}} \put(-0.25,0.05){$ \zeta = 1+1\text{i}$} \hspace{1cm}  
\subfloat{\includegraphics[width=70mm]{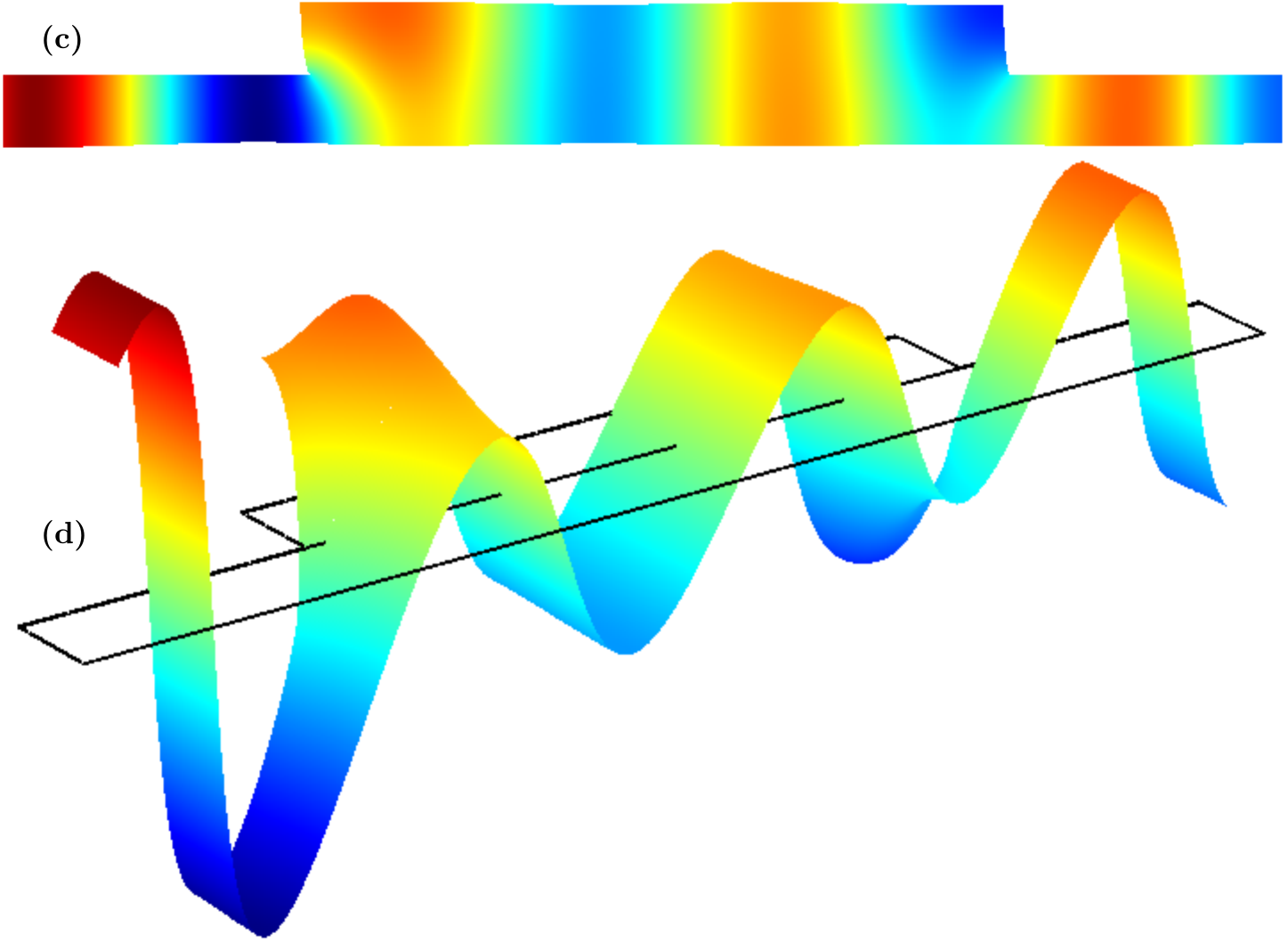}\label{fig:Muffler2D}}\put(-0.24,0.05){$\zeta = 0$}  
 \caption{ Solutions from the proposed method for the interface problem with $\zeta\neq 0$ and $\zeta= 0$.  }
 \label{fig:Pressurejump}
\end{figure}

\subsubsection{With surface wave}
We consider the same problem setup as in Section~\ref{Pressure_jump} except for the interface conditions and wave numbers.
As discussed in Section~\ref{LinearAcoustics}, interface impedances with a negative imaginary part may produce surface waves in a layer of depth $\delta = O(|\Im \zeta|/\kappa)$ around the surface.
The wave number of these waves increases with decreasing $|\Im \zeta|$, and approaches $O(1/\delta)$ as $\Im \zeta\to0$.  

To observe an unattenuated surface wave we consider a purely imaginary acoustic impedance, $\zeta$.
Figure~\ref{fig:surfacewave} shows the behaviour of the surface waves for varying bulk wave number $\kappa$  and a fixed impedance $\zeta = -0.2\text{i}$, whereas 
Figure~\ref{fig:surfacewave2} shows the behaviour for a fixed wave number  $\kappa$  and a varying impedance.
In both cases, we observe the predicted scaling of the layer depth and local surface wave number. 

 \begin{figure}
\centering
 \setlength{\unitlength}{0.5\textwidth}
  \subfloat{\includegraphics[width=72mm]{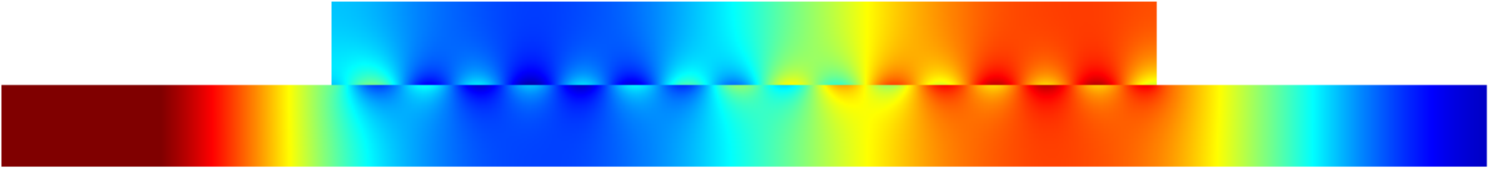}\label{fig:SurfaceWave2}}\put(-0.55,0.12){$\kappa=10$} \hspace{1cm}
 \subfloat{\includegraphics[width=72mm]{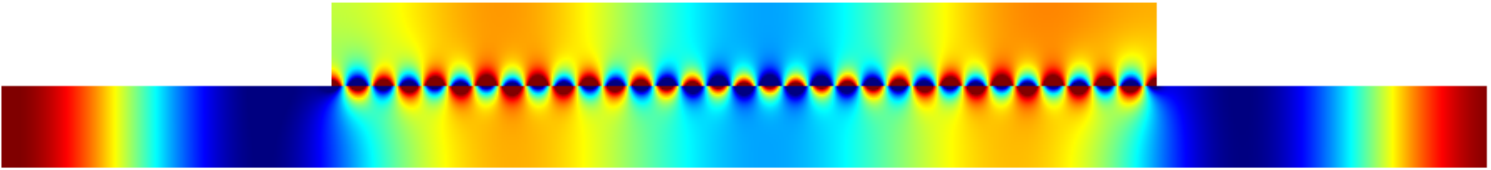}\label{fig:SurfaceWave3}}  \put(-0.55,0.12){$\kappa=20$}\\ \vspace{.35cm}
 \subfloat{\includegraphics[width=72mm]{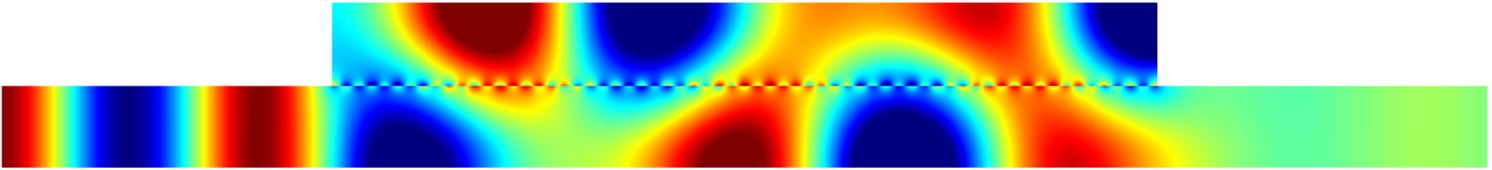}\label{fig:SurfaceWave3}}  \put(-0.55,0.12){$\kappa=40$} \hspace{1cm}
 \subfloat{\includegraphics[width=72mm]{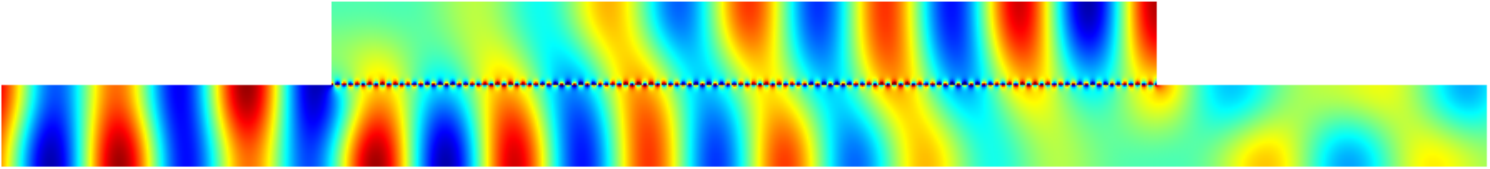}\label{fig:SurfaceWave3}}  \put(-0.55,0.12){$\kappa=80$}
 \caption{ Surface waves corresponding to different wave numbers for $\zeta=-0.2\text{i}$.}
 \label{fig:surfacewave}
\end{figure}

\begin{figure}
\centering
 \setlength{\unitlength}{0.5\textwidth}
 \subfloat{\includegraphics[width=72mm]{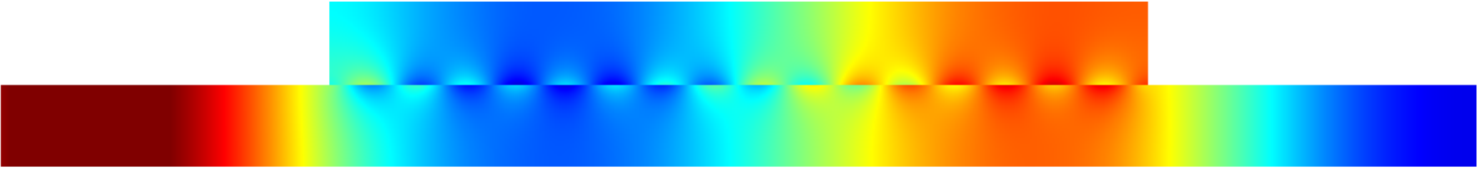}\label{fig:SurfaceWave2}}\put(-0.55,0.12){$\zeta=-0.2\text{i}$} \hspace{1cm}
 \subfloat{\includegraphics[width=72mm]{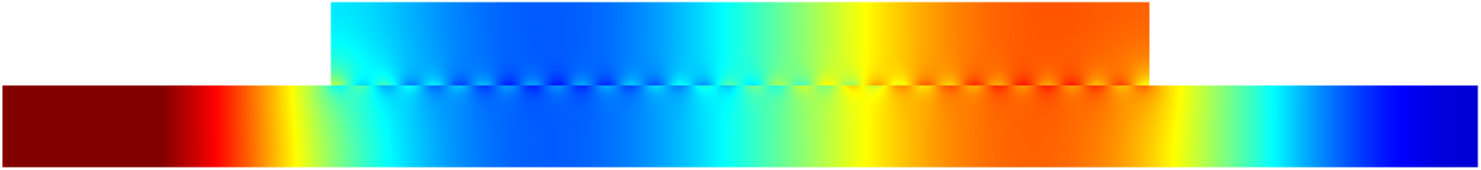}\label{fig:SurfaceWave3}}  \put(-0.55,0.12){$\zeta=-0.1\text{i}$}\\ \vspace{.35cm}
 \subfloat{\includegraphics[width=72mm]{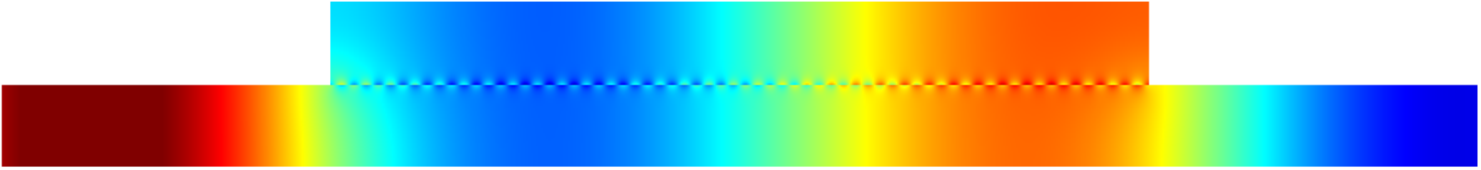}\label{fig:SurfaceWave3}}  \put(-0.55,0.12){$\zeta=-0.05\text{i}$} \hspace{1cm}
 \subfloat{\includegraphics[width=72mm]{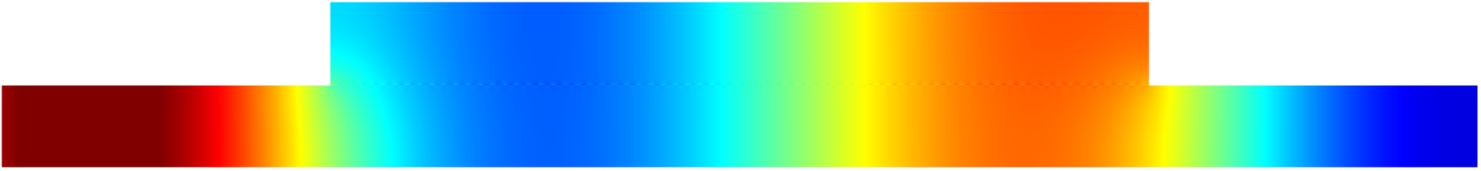}\label{fig:SurfaceWave3}}  \put(-0.55,0.12){$\zeta=-0.025\text{i}$}
 \caption{ Surface waves behaviour as  $\text{Im}(\zeta)$ decreases for $\kappa=10.5$.}
 \label{fig:surfacewave2}
\end{figure}

\subsection{Example in 3D }
This final example shows how the proposed method is capable to handle more complicated domains with multiple interface boundaries.
Here $\cup_{l=1}^4\Omega_l\subset\mathbb{R}^3$ resembles a typical two-chambered reactive muffler with inlet and outlet pipes that extend both outside and inside the chambers. 
As illustrated in Figure~\ref{fig:Exa3CompDom}, the interior of the muffler has two chambers, denoted  $\Omega_2$ and $\Omega_3$, that are separated by an interface boundary $\Gamma_{\textrm{3}}$.
The inlet and outlet pipes, denoted by $\Omega_1$ and $\Omega_4$, extend into chambers $\Omega_2$ and $\Omega_3$, respectively.
The inlet opening of $\Omega_1$ and the outlet opening of $\Omega_4$ are denoted by  $\Gamma_{\textrm{io}}$, and the other openings of $\Omega_1$ and $\Omega_4$ are labeled $\Gamma_{2}$ and $\Gamma_{4}$, respectively.
The part of the inlet and outlet pipes that extend into the two chambers are often perforated with holes much smaller than the operational wavelength for the muffler, which means that we can model these surfaces using a transmission impedance.
These perforated boundaries are here denoted $\Gamma_{1}$ and $\Gamma_{5}$.
All other boundaries, assumed to be sound hard, are denoted by $\Gamma_{\textrm{s}}$.

 \begin{figure}
\centering
 \setlength{\unitlength}{0.5\textwidth}
\subfloat{\includegraphics[width=78mm]{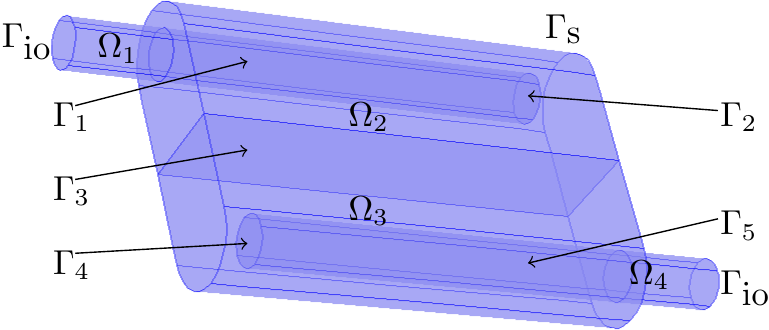}}  
 \caption{ A computational domain in $\mathbb{R}^3$ that resembles a typical two chambered reactive muffler with extended offset inlet and outlet pipes. }
 \label{fig:Exa3CompDom}
\end{figure}

Hence, in this example,  the computational domain is the union of the disjoint, open, and connected domains $\Omega_1$, $\Omega_2$, $\Omega_3$, and $\Omega_4$.
Here, $\Omega_1$ and $\Omega_4$ are cylindrical pipes of length $1~\text{m}$ and radius $0.05~\text{m}$.
The chambers $\Omega_2$ and $\Omega_3$ are the union of half a cylinder of radius $0.1~\text{m}$ and length $0.9~\text{m}$ and a $0.2\times 0.2\times 0.9~\text{m}^3$ square prism excluding $\Omega_1$ and $\Omega_4$, respectively.
Four-fifths of $\Omega_1$ and $\Omega_4$ are extended into $\Omega_2$ and $\Omega_3$.
Interface boundaries $\Gamma_{1}$, $\Gamma_{3}$, and $\Gamma_{5}$ are characterized by $\zeta_{1}= 0.01+2.5\text{i}$, $\zeta_{3}= -0.35\text{i}$, and $\zeta_{5} = 1.01-10.2\text{i}$, respectively, while at the open boundaries $\Gamma_{2}$ and $\Gamma_{4}$ we set $\zeta_{2}= \zeta_{4} =0$.
On the sound hard boundaries, $\Gamma_{\textrm{s}}$, the acoustic flux is zero, (that is, boundary condition~\eqref{Gammascond} holds). 

We model wave propagation in the muffler at wave number  $\kappa=26.6$ by boundary-value problem~\eqref{ContVarationalproblem} defined in the domain $\cup_{l=1}^4\Omega_l$. 
To impose an incoming wave of unit amplitude into $\Omega_1$ and to absorb outgoing planar waves from $\Omega_1$ and $\Omega_4$, we specify the values $g=1$, $g=0$  at $\Gamma_\text{io}\cap\partial\Omega_1$ and $\Gamma_\text{io}\cap\partial\Omega_4$, respectively. 
The interface conditions on $\Gamma_{i}$, for $i = 1,2,\dots,5$ are given by equation~\eqref{GammaIcond} using the corresponding $\zeta_{i}$.

 \begin{figure}
\centering
 \setlength{\unitlength}{0.5\textwidth}
\null\vspace{-20pt}
\subfloat{\includegraphics[width=90mm]{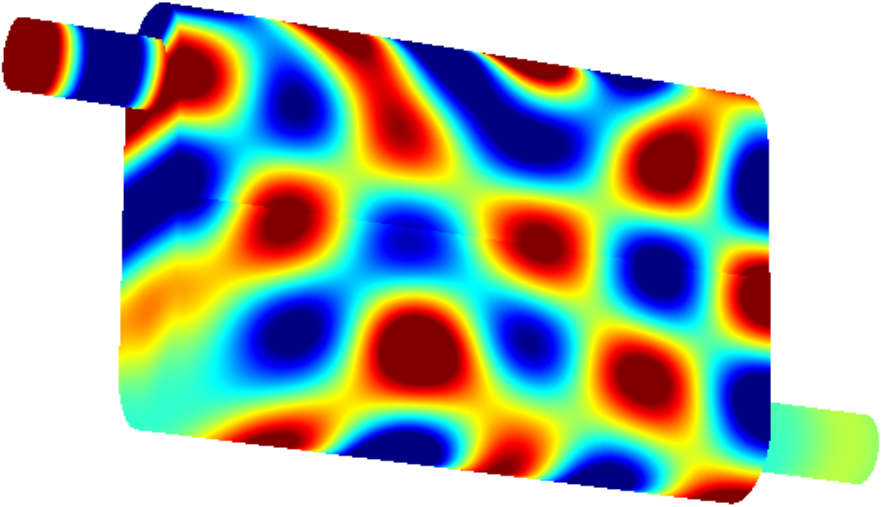}\label{fig:3DPressure1}}  
 \subfloat {\includegraphics[width=90mm]{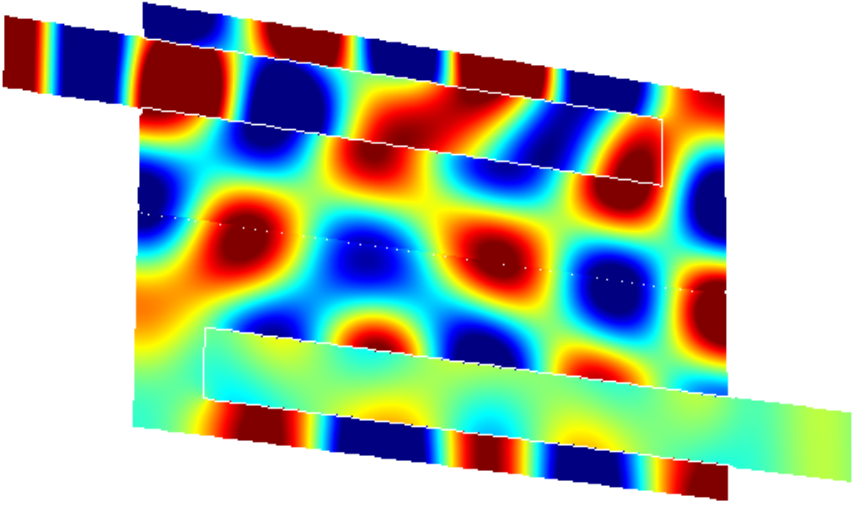}\label{fig:3DPressureSlice1}} 
 \caption{ 3D and sliced plots of the numerical solution of the 3D example with four different interface conditions.}
 \label{fig:3Dexample}
\end{figure}

Figure~\ref{fig:3Dexample} depicts the numerical solution obtained by our method, implemented in Comsol Multiphysics using the ``weak form'' facility.
The finite element discretization in $\Omega_i, \;i= 1,2,3,4$ uses second order tetrahedral elements on an unstructured mesh with maximum element size $h = 0.011~\text{m}$.   
The left picture in Figure~\ref{fig:3Dexample} show the real part of the pressure field on the outer boundaries, and the right image shows a sliced plot of the pressure at the plane $y=0$, where we can note the pressure jumps across the interface boundaries $\Gamma_{1}$, $\Gamma_{3}$, and $\Gamma_{5}$. 

\section*{Acknowledgements}

This research was supported in part by the Swedish Foundation for Strategic Research, Grant No.\ AM13-0029, and by the Swedish Research Council,  Grant No.\ 621-2013-3706. 

\begingroup
\raggedright
\sloppy
\bibliographystyle{model1-num-names}  
\bibliography{Nitschebib} 

\begin{thebibliography}{20}
\expandafter\ifx\csname natexlab\endcsname\relax\def\natexlab#1{#1}\fi
\providecommand{\bibinfo}[2]{#2}
\ifx\xfnm\relax \def\xfnm[#1]{\unskip,\space#1}\fi
\bibitem[{Babu\v{s}ka(1973)}]{Ba73}
\bibinfo{author}{I.~Babu\v{s}ka},
\newblock \bibinfo{title}{The finite element method with penalty},
\newblock \bibinfo{journal}{Math. Comp.} \bibinfo{volume}{27}
  (\bibinfo{year}{1973}) \bibinfo{pages}{221--228}.
\bibitem[{Nitsche(1971)}]{Ni71}
\bibinfo{author}{J.~Nitsche},
\newblock \bibinfo{title}{Über ein {V}ariationsprinzip zur {L}ösung von
  {D}irichlet-{P}roblemen bei {V}erwendung von {T}eilräumen, die keinen
  {R}andbedingungen unterworfen sind},
\newblock \bibinfo{journal}{Abh. Math. Sem. Univ. Hamburg} \bibinfo{volume}{36}
  (\bibinfo{year}{1971}) \bibinfo{pages}{9–15}.
\bibitem[{Arnold et~al.(2002)Arnold, Brezzi, Cockburn, and Marini}]{ArBrCoMa02}
\bibinfo{author}{D.~N. Arnold}, \bibinfo{author}{F.~Brezzi},
  \bibinfo{author}{B.~Cockburn}, \bibinfo{author}{L.~D. Marini},
\newblock \bibinfo{title}{Unified analysis of discontinuous {G}alerkin methods
  for elliptic problems},
\newblock \bibinfo{journal}{SIAM J. Numer. Anal.} \bibinfo{volume}{39}
  (\bibinfo{year}{2002}) \bibinfo{pages}{1749--1779}.
\bibitem[{Stenberg(1998)}]{St98}
\bibinfo{author}{R.~Stenberg},
\newblock \bibinfo{title}{Mortaring by a method of {J. A.} {N}itsche},
\newblock in: \bibinfo{editor}{S.~Idelsohn}, \bibinfo{editor}{E.~Oñate},
  \bibinfo{editor}{E.~Dvorkin} (Eds.), \bibinfo{booktitle}{Computational
  Mechanics: New Trends and Applications}, \bibinfo{publisher}{CIMNE},
  \bibinfo{address}{Barcelona, Spain}, \bibinfo{year}{1998}.
\bibitem[{Becker et~al.(2003)Becker, Hansbo, and Stenberg}]{Becker2003}
\bibinfo{author}{R.~Becker}, \bibinfo{author}{P.~Hansbo},
  \bibinfo{author}{R.~Stenberg},
\newblock \bibinfo{title}{A finite element method for domain decomposition with
  non-matching grids},
\newblock \bibinfo{journal}{ESAIM: Math. Model. Num.} \bibinfo{volume}{37}
  (\bibinfo{year}{2003}) \bibinfo{pages}{209--225}.
\bibitem[{Juntunen and Stenberg(2009)}]{Juntunen2009}
\bibinfo{author}{M.~Juntunen}, \bibinfo{author}{R.~Stenberg},
\newblock \bibinfo{title}{Nitsche's method for general boundary conditions},
\newblock \bibinfo{journal}{Mathematics of computation} \bibinfo{volume}{78}
  (\bibinfo{year}{2009}) \bibinfo{pages}{1353--1374}.
\bibitem[{Hansbo and Hansbo(2004)}]{Hansbo20043523}
\bibinfo{author}{A.~Hansbo}, \bibinfo{author}{P.~Hansbo},
\newblock \bibinfo{title}{A finite element method for the simulation of strong
  and weak discontinuities in solid mechanics},
\newblock \bibinfo{journal}{Comput. Methods Appl. Mech. Engrg.}
  \bibinfo{volume}{193} (\bibinfo{year}{2004}) \bibinfo{pages}{3523--3540}.
\bibitem[{Burman et~al.(2015)Burman, Claus, Hansbo, Larson, and
  Massing}]{BuClHaLaMa15}
\bibinfo{author}{E.~Burman}, \bibinfo{author}{S.~Claus},
  \bibinfo{author}{P.~Hansbo}, \bibinfo{author}{M.~G. Larson},
  \bibinfo{author}{A.~Massing},
\newblock \bibinfo{title}{Cut{FEM}: Discretizing geometry and partial
  differential equations},
\newblock \bibinfo{journal}{Internat. J. Numer. Methods Engrg.}
  \bibinfo{volume}{104} (\bibinfo{year}{2015}) \bibinfo{pages}{472–501}.
\bibitem[{Kirby and Cummings(1998)}]{KiCu98}
\bibinfo{author}{R.~Kirby}, \bibinfo{author}{A.~Cummings},
\newblock \bibinfo{title}{The impedance of perforated plates subjected to
  grazing gas flow and backed by porous media},
\newblock \bibinfo{journal}{J. Sound Vibration} \bibinfo{volume}{217}
  (\bibinfo{year}{1998}) \bibinfo{pages}{619--636}.
\bibitem[{Bonnet-Bendhia et~al.(2004)Bonnet-Bendhia, Drissi, and
  Gmati}]{BiDrGm04}
\bibinfo{author}{A.~S. Bonnet-Bendhia}, \bibinfo{author}{D.~Drissi},
  \bibinfo{author}{N.~Gmati},
\newblock \bibinfo{title}{Simulation of muffler's transmission losses by a
  homogenized finite element method},
\newblock \bibinfo{journal}{J. Comput. Acoust.} \bibinfo{volume}{12}
  (\bibinfo{year}{2004}) \bibinfo{pages}{447--474}.
\bibitem[{Rienstra and Hirschberg(2015)}]{RiHi15}
\bibinfo{author}{S.~W. Rienstra}, \bibinfo{author}{A.~Hirschberg},
  \bibinfo{title}{An introduction to acoustics}, \bibinfo{howpublished}{Revised
  and updated version of reports IWDE 92-06 and IWDE 01-03, Eindhoven
  University of Technology}, \bibinfo{year}{2015}.
\bibitem[{Wloka(1987)}]{wloka1987}
\bibinfo{author}{J.~Wloka}, \bibinfo{title}{Partial Differential Equations},
  \bibinfo{publisher}{Cambridge University Press}, \bibinfo{year}{1987}.
\bibitem[{Brenner and Scott(2008)}]{brenner2008}
\bibinfo{author}{S.~Brenner}, \bibinfo{author}{R.~Scott}, \bibinfo{title}{The
  Mathematical Theory of Finite Element Methods}, Texts in Applied Mathematics,
  \bibinfo{publisher}{Springer}, \bibinfo{year}{2008}.
\bibitem[{Leis(2013)}]{leis2013}
\bibinfo{author}{R.~Leis}, \bibinfo{title}{Initial Boundary Value Problems in
  Mathematical Physics}, \bibinfo{publisher}{Dover Publications},
  \bibinfo{year}{2013}.
\bibitem[{Dautray and Lions(1988)}]{DaLiV2}
\bibinfo{author}{R.~Dautray}, \bibinfo{author}{J.~L. Lions},
  \bibinfo{title}{Mathematical Analysis and Numerical Methods for Science and
  Technology}, volume \bibinfo{volume}{2: Functional and Variational Methods},
  \bibinfo{publisher}{Springer}, \bibinfo{address}{Berlin},
  \bibinfo{year}{1988}.
\bibitem[{Hansbo and Hansbo(2002)}]{Hansbo434217}
\bibinfo{author}{A.~Hansbo}, \bibinfo{author}{P.~Hansbo},
\newblock \bibinfo{title}{An unfitted finite element method, based on
  {N}itsche's method, for elliptic interface problems},
\newblock \bibinfo{journal}{Comput. Methods Appl. Mech. Engrg.}
  \bibinfo{volume}{191} (\bibinfo{year}{2002}) \bibinfo{pages}{5537--5552}.
\bibitem[{Warburton and Hesthaven(2003)}]{WaHe03}
\bibinfo{author}{T.~Warburton}, \bibinfo{author}{J.~S. Hesthaven},
\newblock \bibinfo{title}{On the constants in $hp$-finite element trace inverse
  inequalities},
\newblock \bibinfo{journal}{Comput. Methods Appl. Mech. Engrg.}
  \bibinfo{volume}{192} (\bibinfo{year}{2003}) \bibinfo{pages}{2765--2773}.
\bibitem[{Schatz(1974)}]{Schatz1974}
\bibinfo{author}{A.~H. Schatz},
\newblock \bibinfo{title}{An observation concerning {R}itz--{G}alerkin methods
  with indefinite bilinear forms},
\newblock \bibinfo{journal}{Math. Comp.} \bibinfo{volume}{28}
  (\bibinfo{year}{1974}) \bibinfo{pages}{959--962}.
\bibitem[{Arnold(1982)}]{Arnold1982}
\bibinfo{author}{D.~N. Arnold},
\newblock \bibinfo{title}{An interior penalty finite element method with
  discontinuous elements},
\newblock \bibinfo{journal}{SIAM J. Numer. Anal.} \bibinfo{volume}{19}
  (\bibinfo{year}{1982}) \bibinfo{pages}{742--760}.
\bibitem[{Ihlenburg and Babu{\v s}ka(1995)}]{IhBa95}
\bibinfo{author}{F.~Ihlenburg}, \bibinfo{author}{I.~Babu{\v s}ka},
\newblock \bibinfo{title}{Finite element solution of the {H}elmholtz equation
  with high wave number part 1: The h-version of the {FEM}}
  \bibinfo{volume}{30} (\bibinfo{year}{1995}) \bibinfo{pages}{9--37}.

\end{thebibliography}
\endgroup

\end{document}